\newcommand{\restr}{%
  \,\raisebox{-.127ex}{\reflectbox{\rotatebox[origin=br]{-90}{$\lnot$}}}\,%
}
\theoremstyle{definition}
\newtheorem{definition}{Definition}[section]
\theoremstyle{remark}
\newtheorem{remark}[definition]{Remark}
\theoremstyle{plain}
\newtheorem{theorem}[definition]{Theorem}
\theoremstyle{plain}
\newtheorem{lemma}[definition]{Lemma}
\theoremstyle{remark}
\theoremstyle{plain}
\newtheorem{corollary}[definition]{Corollary}
\theoremstyle{plain}
\newtheorem{proposition}[definition]{Proposition}
\theoremstyle{plain}
\theoremstyle{plain}
\numberwithin{equation}{section}
\def\XXint#1#2#3{{\setbox0=\hbox{$#1{#2#3}{\int}$}
      \vcenter{\hbox{$#2#3$}}\kern-.5\wd0}}
\definecolor{ao}{rgb}{0.0, 0.5, 0.0}
\DeclareMathOperator*{\aplim}{ap-\lim}
\newcommand{\Om}{\Omega}
\newcommand{\R}{\mathbb{R}}
\newcommand{\F}{\mathcal{F}}
\newcommand{\di}{\mathrm{d}}
\newcommand{\de}{\textnormal{d}}
\newcommand{\Ss}{\mathbb{S}}
\newcommand{\N}{\mathbb{N}}
\newcommand{\Q}{\mathbb{Q}}
\newcommand{\Z}{\mathbb{Z}}
\newcommand{\EEE}{\color{black}}
\newcommand{\BBB}{\color{black}}
\def\namedlabel#1#2{\begingroup
    #2%
    \def\@currentlabel{#2}%
    \phantomsection\label{#1}\endgroup
}
\title[Finite-difference approximation of free/discontinuity problems in $GSBD$]{On De Giorgi's Conjecture of Nonlocal approximations for free-discontinuity problems:\\The symmetric gradient case}
\author[S. Almi]{S. Almi}
\author[E. Davoli]{E. Davoli}
\author[A. Kubin]{A. Kubin}
\author[E. Tasso]{E. Tasso}
\begin{document}

\begin{abstract}
	   
We prove that E. De Giorgi's conjecture for the nonlocal approximation of free-discontinuity problems extends to the case of functionals defined in terms of the symmetric gradient of the admissible field. After introducing a suitable class of continuous finite-difference approximants, we show the compactness of deformations with equibounded energies, as well as their Gamma-convergence. The compactness analysis is a crucial hurdle, which we overcome by generalizing  a Fr\'echet-Kolmogorov approach previously introduced by two of the authors. A second essential difficulty is the identification of the limiting space of admissible deformations, since a control on the directional variations is, a priori, only available in average. A limiting  representation in GSBD is eventually established via a novel characterization of this space.


\EEE

		\medskip
  
		\noindent
		{\it 2020 Mathematics Subject Classification: 49Q20, 
                                                          49J45, 
                                                          26B30  
                                                          }

		\smallskip
		\noindent
		{\it Keywords and phrases:}GSBD
Free-discontinuity problems, nonlocal approximations, symmetric gradient, integral geometric measures.

	\end{abstract}

 \maketitle

\section{Introduction}
Free-discontinuity problems and their approximation by means of Sobolev formulations, discrete descriptions, or nonlocal functionals are a thriving research area owing to their broad scope of applications, ranging from image reconstruction,  to the modeling of failure phenomena in continuum mechanics. A milestone in this direction was a conjecture formulated by E. De Giorgi and proven by M. Gobbino \cite{Gobbino},  dealing with the approximation via $\Gamma$-convergence of the Mumford-Shah functional by a sequence of nonlocal counterparts. Such analysis has then paved the way for further generalizations in \cite{Cortesani, Gobbino-Mora}. All the aforementioned contributions deal with free-discontinuity problems involving the full distributional gradient of the admissible maps. A question that, to the authors' knowledge, was so far left open, was whether similar nonlocal approximation techniques would also prove successful for the study of free-boundary problems involving more general differential operators. \BBB

In this paper we initiate the study of \emph{continuous finite-difference approximations} of linearized Griffith functionals.  Aside from purely mathematical interest, such inquiries are deeply rooted in the recent research lines which are developing in image processing and data science. Motivated by applications in Magnetic Resonance Imaging (MRI) or Positron Emission Tomography (PET), regularizers involving more general differential operators than the gradient have been studied, e.g., in the settings of regularization graphs \cite{Bredies-Carioni-Holler}, for higher-order Total Variation operators \cite{Brinkmann, Bredies-Holler}, as well as in structural Total Variation approaches \cite{Hintermueller} (see also \cite{data} for a review on data-driven approaches in image regularization).   Concerning the data-science applicative interest, starting from the variational analysis in \cite{Trillos-Slepcev}, a novel research direction has open up for variational studies on point clouds (see, e.g.,~\cite{Braides-Caroccia, Caroccia, Caroccia-Chambolle-Slepcev, MR4069821}), as well as related machine-learning applications \cite{Bungert-Stinson}. The corresponding mathematical study of generalized graph-based differential constraints different from curl $A=0$ is, to the authors' knowledge, a thriving, mostly unexplored, subject (see, e.g., \cite{Liang} and the references therein). \BBB

The case study we tackle here is a symmetrized counterpart of the nonlocal energies considered in~\cite{Gobbino, Gobbino-Mora}. 
In order to describe our findings, we first recall the results \cite{Gobbino}, where the sequence of functionals 
\begin{equation}\label{e:introg1}
    \mathscr{F}_{\varepsilon}(u,\Omega) :=\frac{1}{\varepsilon^{n+1}}\int_{\Omega \times \Omega} \arctan \left ( \frac{( u(  x'   )-u(   x   ))^2}{\varepsilon} \right ) e^{-|\frac{  x' - x  }{\varepsilon}|^2} \de x' \de x, \quad u \in L^1(\Omega)
\end{equation}
is shown to $\Gamma$-converge, as $\varepsilon$ tends to zero, to the Mumford-Shah energy 
\begin{equation}\label{e:introg2}
  \sqrt{\pi}\mathcal{MS}_{1/ \sqrt{\pi}}(u,\Omega) := \int_{\Omega} |\nabla u(x)|^2 \de x + \sqrt{\pi}\mathcal{H}^{n-1}(J_u), \quad u \in \text{GSBV}(\Omega).
\end{equation}
In \eqref{e:introg2}, the acronym ${\rm GSBV}$ stands for the space of functions with generalised special bounded variation (roughly speaking, maps whose truncations are functions of bounded variations with distributional gradients exhibiting null Cantor part), cf.~\cite[Section~4.5]{AFP}. Note that the multiplicative constants in~\eqref{e:introg2} depend only on the choice of the integrand $\arctan(x)$ and on the weight function $e^{-|x|^2}$.

It is thus natural, in a first stage, to ask whether the  functional
\begin{equation}
\label{e:intro1}
    \F_{\varepsilon}(u,\Omega) :=\frac{1}{\varepsilon^{n+1}}\int_{\Omega \times \Omega} \arctan \left ( \frac{( (u(  x'   )-u(   x   )) \cdot (x' - x)   )^2}{\varepsilon^3} \right ) e^{-|\frac{  x' - x  }{\varepsilon}|^2} \de x' \de x
\end{equation}
defined for measurable functions $u \in L^{0} (\Omega; \R^{n})$ provides, analogously, a nonlocal approximation, in the sense of $\Gamma$-convergence, of linearized free discontinuity problems of the form
\begin{equation*}
    \F(u,\Omega) :=\int_{\Omega} \varphi(e(u) ) \, \de x +C\mathcal{H}^{n-1}(J_u), \quad u \in \text{GSBD}(\Omega)
\end{equation*}
for suitable choices of the density $\varphi$ and of the constant $C>0$. In the expression above, $\text{GSBD}(\Omega)$ denotes the space of functions with generalized special bounded deformation (see \cite{DM2013}), and $e(u)$ the absolutely continuous part of the symmetric distributional gradient $\mathcal{E}(u):=\tfrac{Du+Du^{\top}}{2}$.

The structure of \eqref{e:intro1} aligns with the principles of linearized theories in continuum mechanics, where only the symmetric part of the gradient contributes to the deformation energy.  This consideration naturally motivates the assumption that only the component of $u(x') - u(x)$ projected along the direction $x'-x$ should provide energy contributions. By setting $\varepsilon \xi:=x' -x$, when the differences $(u(x + \varepsilon \xi) - u(x)) \cdot \xi$ are relatively small, the functionals in \eqref{e:intro1} behave like pure bulk energies. For large values of $(u(x + \varepsilon \xi) - u(x)) \cdot \xi$, instead, the energies in \eqref{e:intro1} saturate, effectively detecting and penalizing the size of the discontinuities of $u$.

 An important remark concerns the fact that the energies in \eqref{e:introg1} only approximate the Mumford-Shah functional under suitable additional assumptions on the geometry of $\Omega$, such as Lipschitz regularity (see \cite[Remark~7.1]{Gobbino}). Such requirements may not be fully consistent when handling free-boundary problems where discontinuities are already present within the material. Consider, for example, the case of a crack-initiated domain, where the presence of an $(n-1)$-dimensional set $\Gamma \subset \Omega$, representing an initial crack makes the set $\Omega \setminus \Gamma$ not Lipschitz.

 This possible lack of regularity of $\Omega$ calls for a more refined approach, which we address by modifying the energies in \eqref{e:intro1} as follows. First, for every $\xi \in \mathbb{S}^{n-1}$ and every Borel set $E \subset \Omega$ we introduce the functionals $F_{\varepsilon,\xi}(u,E)$, defined as
\begin{equation*}
    F_{\varepsilon,\xi}(u,E):=\frac{1}{\varepsilon} \int_{E \cap (E-\varepsilon\xi)} \arctan  \left ( \frac{((u(x+\varepsilon \xi)-u(x))\cdot \xi )^2}{\varepsilon} \right )  \de x,\quad u \in L^0(\Omega;\mathbb{R}^n).
\end{equation*}
 The specific structure of \eqref{e:intro1} allows one to make use of Fubini's theorem and rewrite it as an average of $F_{\varepsilon,\xi}$ with respect to all possible directions $\xi$. Specifically, we have
\begin{equation*}
    \F_{\varepsilon}(u,E) =\int_{\frac{E-E}{\varepsilon}} F_{\varepsilon,\xi}(u,E) e^{-|\xi|^2} \de \xi,
\end{equation*}
where $E-E:=\{y \in \mathbb{R}^n : y=x'-x, \text{ for some } x,x' \in E\}$. The relevant functionals for our analysis, denoted by $\mathcal{F}^p_\varepsilon(u,\Omega)$, depend on a further degree of freedom, encoded by a parameter $p \in [1,\infty)$, and take the form
\begin{equation}
\label{e:genfun}
    \mathcal{F}^p_\varepsilon(u,\Omega):= \sup_{\mathscr{B}} \sum_{B \in \mathscr{B}} \bigg(\int_{\frac{\Om-\Om}{\varepsilon}} F_{\varepsilon,\xi}(u,B)^p e^{-|\xi|^2} \de \xi\bigg)^{\frac{1}{p}},\quad u \in L^0(\Omega;\mathbb{R}^n).
\end{equation}
In the expression above, $\mathscr{B}$ represents the class of all finite families of pairwise disjoint open balls contained in $\Omega$. This supremum procedure over $\mathscr{B}$ allows the functional $\mathcal{F}^{p}_\varepsilon$ to bypass irregularities within the domain $\Omega$. In fact, by choosing $p=1$, the strict superadditivity of the set function $F_{\varepsilon,\xi}(u,\cdot)$ leads to the strict inequality $\mathcal{F}^1_{\varepsilon} < \mathcal{F}_{\varepsilon}$. 

For $p>1$, the presence of the $L^p$-norm in the definition \eqref{e:genfun} should be viewed as a \emph{regularizing effect}.
\BBB

The major difficulty we face in the analysis of~\eqref{e:genfun} is the lack of a clear compactness strategy. The results of~\cite{AlmiTasso, Chambolle-Crismale, Chambolle-Crismale-2} cannot be applied, as the sequence $\mathcal{F}^{p}_{\varepsilon}$ is defined over the space of measurable maps, so that neither a differential structure nor integrability assumptions are given a priori. Thus, our main result is the following compactness and closure theorem for sequences $u_{\varepsilon}$ with equibounded energy $\mathcal{F}^{p}_{\varepsilon}$, for some $p \in [1, +\infty)$.


\begin{theorem}[Closure and compactness]\label{compattezza}
Let $\Omega \subset \mathbb{R}^n$ be open, let $p\ge1$, and let $\{u_{\varepsilon}\}_{\varepsilon >0} \subset L^0(\Omega;\R^n)$ be  such that
    \begin{equation}\label{energia_limitata}
        \sup_{\varepsilon >0}\F_{\varepsilon}^p(u_{\varepsilon},\Omega) < \infty.
    \end{equation}
    Then, there exists a subsequence $\varepsilon_k \to 0$ as $k \to \infty$ such that the set
    \begin{equation}
    \label{e:setA}
        A := \{x \in \Omega : |u_{\varepsilon_k}(x)|\to \infty \,\, \text{as} \,\, k\to \infty\}
    \end{equation}
    has finite perimeter, and $u_{\varepsilon_k}\to u$ pointwise almost everywhere in $\Omega \setminus A$ for some measurable function $u \colon \Omega \setminus A \to \mathbb{R}^n$. In addition,  for almost every $c \in \R$,  the extension of $u$ to the whole of $\Omega$ defined as $u =c$ on $A$, satisfies $u \in \textnormal{GSBD}(\Omega)$ and 
    \begin{equation}\label{semi378}
    \liminf_{k \to \infty} \mathcal{F}^p_{\varepsilon_k}(u_k,\Omega) \ge \mathcal{F}^p(u,\Omega):=  \int_{\Omega}  \varphi_{p}  (e(u)) \de x + \beta_{p}  \mathcal{H}^{n-1}(J_u \cup \partial^* A),
\end{equation}
where $\varphi_p \colon \mathbb{M}^{n \times n}_{sym} \to [0,\infty)$ is a $2$-homogeneous function and $\beta_p$ is a positive constant. For $p=1$ we have the following explicit form 
\begin{equation*}
   \mathcal{F}^1(u,\Omega)= \frac{\pi^{\frac{n}{2}}}{2}\int_{\Omega} \Big (|e(u)|^2+\frac{1}{2}\textnormal{div}(u)^2\Big)\,\de x +\frac{\pi^{\frac{n+1}{2}}}{2}\mathcal{H}^{n-1}(J_u),
\end{equation*}
whenever $u \in \emph{GSBD}(\Omega)$.
\end{theorem}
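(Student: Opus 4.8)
\medskip

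\noindent\emph{Proof strategy.} The plan is to pass to one space dimension by slicing, invoke M.~Gobbino's one-dimensional compactness and lower-bound results from \cite{Gobbino} along the slices, and then reassemble the $\R^n$-valued limit $u$ together with its blow-up set $A$, checking finally that $u$ satisfies G.~Dal Maso's slicing characterisation of $\mathrm{GSBD}$ from \cite{DM2013}. For $\xi\in\Ss^{n-1}$ set $\Pi^\xi:=\xi^\perp$ and, for $E\subseteq\Om$, $y\in\Pi^\xi$ and a function $w$ on $\Om$, write $E^\xi_y:=\{t\in\R:y+t\xi\in E\}$, $w^\xi_y(t):=w(y+t\xi)\cdot\xi$; let $G_\varepsilon(v,I):=\tfrac1\varepsilon\int_{I\cap(I-\varepsilon)}\arctan\!\big(\tfrac{(v(t+\varepsilon)-v(t))^2}{\varepsilon}\big)\,\de t$ be the one-dimensional Gobbino functional, so that slicing the definition of $F_{\varepsilon,\xi}$ gives, for every ball $B$ compactly contained in $\Om$, $F_{\varepsilon,\xi}(u_\varepsilon,B)=\int_{\Pi^\xi}G_\varepsilon\big((u_\varepsilon)^\xi_y,B^\xi_y\big)\,\de\HH^{n-1}(y)$.

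\smallskip

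\noindent\emph{Step 1 (slicing and the limit energy).} Since for each $p\ge1$ the set function $U\mapsto\F^p_\varepsilon(u_\varepsilon,U)$ is increasing and additive on disjoint open subsets of $\Om$ (every ball is connected, hence lies in one of two disjoint sets), the set function $U\mapsto\liminf_k\F^p_{\varepsilon_k}(u_{\varepsilon_k},U)$ is superadditive and, by the De Giorgi--Letta criterion, bounds a finite Radon measure $\mu$ on $\Om$ from above; it therefore suffices to prove pointwise convergence and \eqref{semi378} on balls and sum up. Retaining a single ball $B$ in the family $\mathscr B$ of \eqref{e:genfun} and applying Jensen's inequality relative to the finite weight $e^{-|\xi|^2}\de\xi$, the bound \eqref{energia_limitata} yields a uniform estimate for $\int_{\Pi^\xi}\liminf_k G_{\varepsilon_k}\big((u_{\varepsilon_k})^\xi_y,B^\xi_y\big)\,\de\HH^{n-1}(y)$ for a.e.\ $\xi$; by Fatou, $\liminf_k G_{\varepsilon_k}\big((u_{\varepsilon_k})^\xi_y,B^\xi_y\big)<\infty$ for a.e.\ $\xi$ and $\HH^{n-1}$-a.e.\ $y\in\Pi^\xi$.

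\smallskip

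\noindent\emph{Step 2 (compactness and identification of the limit space).} Along each such slice, the one-dimensional analysis of \cite{Gobbino} (truncation together with $\arctan s\ge c\min\{s,1\}$) provides uniform $BV$-type estimates on the truncations of $(u_{\varepsilon_k})^\xi_y$, hence pointwise a.e.\ convergence of $(u_{\varepsilon_k})^\xi_y$ off the set where it diverges, the limit lying in $SBV_{\mathrm{loc}}$ with finitely many jumps; this is the Fr\'echet--Kolmogorov-type slicing step. Diagonalising over a countable family $\{\xi_j\}\subset\Ss^{n-1}$ that is dense and any $n$ of whose elements are linearly independent, one obtains a single subsequence $\varepsilon_k$ along which these conclusions hold simultaneously for all $j$ and a.e.\ base point. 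A Fubini argument then shows that, off a Lebesgue-null set, each $x$ either satisfies $|u_{\varepsilon_k}(x)|\to\infty$ or has $u_{\varepsilon_k}(x)$ bounded with convergent projections onto $n$ independent directions of $\{\xi_j\}$, which forces $u_{\varepsilon_k}(x)$ to converge; this produces $A$ as in \eqref{e:setA} and a measurable $u\colon\Om\setminus A\to\R^n$ with $u_{\varepsilon_k}\to u$ a.e.\ on $\Om\setminus A$. The set $A$ has finite perimeter: on a.e.\ line in direction $\xi$ the limit slice jumps at each point of $\partial^*A$ met by the line, so the finite slicewise jump counts, integrated in $y$ and averaged in $\xi$ against the rotation-invariant Gaussian, control $\HH^{n-1}(\partial^*A)$ via the integral-geometric (Cauchy--Crofton) formula. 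Choosing a constant $c$ outside the exceptional null set for which the trace of $u|_{\Om\setminus A}$ on $\partial^*A$ differs from $c$ at $\HH^{n-1}$-a.e.\ point, and extending $u:=c$ on $A$, one verifies Dal Maso's criterion \cite{DM2013} with $\mu$ as the competitor measure: the slicewise limits constructed for the countable family coincide with the slices $u^\xi_y$ (since $u_{\varepsilon_k}\to u$ a.e.), and by a density argument the resulting $SBV_{\mathrm{loc}}$ structure and the control on the total variation of slices extend to a.e., hence to every, direction $\xi$. Thus $u\in\textnormal{GSBD}(\Om)$ and the jump set of the extension equals $J_u\cup\partial^*A$ up to an $\HH^{n-1}$-null set. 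This step — extracting an honest $\R^n$-valued $\textnormal{GSBD}$ function, together with its finite-perimeter blow-up set, from the purely \emph{scalar and directional} information carried by the $F_{\varepsilon,\xi}$'s, and doing so with a single Radon measure controlling all directions at once — is the main obstacle.

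\smallskip

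\noindent\emph{Step 3 (lower bound).} For the bulk term, apply the one-dimensional inequality $\liminf_k G_{\varepsilon_k}(v_k,I)\ge\int_I|v'|^2\,\de t+\tfrac\pi2\HH^0(J_v)$ of \cite{Gobbino} along a.e.\ slice (with $v$ the slice limit), integrate in $y$, and exploit the $L^p$-in-$\xi$/$\ell^1$-over-balls structure of \eqref{e:genfun}; using $(u^\xi_y)'(t)=e(u)(y+t\xi)\xi\cdot\xi$ and a fine disjoint ball cover of the region where $e(u)$ is approximately continuous, the bulk contribution is $\int_\Om\varphi_p(e(u))\,\de x$ with $\varphi_p(M)=\big(\int_{\R^n}(M\xi\cdot\xi)^{2p}e^{-|\xi|^2}\de\xi\big)^{1/p}$, which is manifestly $2$-homogeneous on $\M^{n\times n}_{sym}$. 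For the surface term, on a small ball $B$ centred on a flat piece of $J_u\cup\partial^*A$ with unit normal $\nu$ one computes $F_{\varepsilon,\xi}(u_\varepsilon,B)\to\tfrac\pi2\HH^{n-1}\big((J_u\cup\partial^*A)\cap B\big)\,|\nu\cdot\xi|$; raising to the $p$-th power, integrating against $e^{-|\xi|^2}\de\xi$ — whose rotation invariance makes $\int_{\R^n}|\nu\cdot\xi|^p e^{-|\xi|^2}\de\xi$ independent of $\nu$ — and summing over a disjoint Vitali-type cover of $J_u\cup\partial^*A$, one obtains $\beta_p\HH^{n-1}(J_u\cup\partial^*A)$ with $\beta_p=\tfrac\pi2\big(\int_{\R^n}|\xi\cdot e_1|^p e^{-|\xi|^2}\de\xi\big)^{1/p}>0$. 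Since the bulk and surface contributions concentrate on mutually singular sets, a single disjoint family combining the fine bulk cover with small balls straddling $J_u\cup\partial^*A$ yields the sum, i.e.\ \eqref{semi378}. It is precisely the superlinearity $p>1$ that makes the limiting surface energy this clean $(n-1)$-Hausdorff term: when $p=1$ the same slicing produces instead an integral-geometric quantity, strictly smaller on irregular sets, which is the origin of the extra term arising in the linear case.
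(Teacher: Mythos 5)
There are two genuine gaps, both at the points your outline flags as ``the main obstacle'' but then resolves by assertion. First, the compactness extraction in Step 2 does not work as described: one-dimensional compactness along a slice produces a subsequence \emph{depending on that slice}, and you cannot diagonalise over the uncountably many lines in even a single direction, so ``slicewise $BV$ estimates $+$ diagonalisation over a countable dense set of directions $+$ Fubini'' does not produce one subsequence $\varepsilon_k$ with $u_{\varepsilon_k}\to u$ a.e.\ on $\Omega\setminus A$. What is actually needed (and what the paper does) is genuine $L^1_{loc}$-precompactness of the maps $f_k(x,\xi)=\arctan(u_k(x)\cdot\xi)$ on $\Omega\times(B_2\setminus B_{1/2})$, obtained by proving equi-continuity of translations jointly in $x$ \emph{and} $\xi$; this rests on the quantitative estimates of Lemma~\ref{l:fundamentalest}, Lemma~\ref{lemmaG5.1} and Lemma~\ref{Gob5.3} (translating by $\eta$ is controlled by first tilting the direction to $\xi+\eta/\bar j$ and then translating along that direction by an energy-controlled amount). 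Naming Fr\'echet--Kolmogorov is not a substitute for these estimates, and the mechanism you describe in their place fails.

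Second, the identification $u\in\mathrm{GSBD}(\Omega)$ cannot be obtained by ``verifying Dal Maso's criterion with $\mu$ as competitor measure and a density argument in $\xi$.'' The energy bound only survives in the limit as an \emph{averaged-in-$\xi$} control, i.e.\ $u\in\mathrm{GSBV}^{\mathcal{E}}$ with $\hat\mu^p_u(\Omega)<\infty$: you get finiteness of $\mu^\xi_u$ for a.e.\ $\xi$, but no single Radon measure $\lambda$ dominating $\mu^\xi_u(B)$ for \emph{every} $\xi$, which is what the $\mathrm{GBD}$ definition requires; because of the nonlinearity (truncation at jump size $1$) the averaged bound cannot be upgraded to a uniform-in-$\xi$ bound by any soft density argument. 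This upgrade is exactly the content of Section~\ref{s:structure}: one must first prove the structure theorems (existence of $e(u)\in L^1$, Theorem~\ref{t:symmdiff}; the slicing identity $(J_u)^\xi_y=J_{\hat u^\xi_y}$, Theorem~\ref{t:slicejs}, which hinges on the rectifiability machinery of Proposition~\ref{p:keyprop1} and \cite{AlmiTasso2} and is precisely where $p>1$ is used), and only then can one build the dominating measure $\lambda=|e(u)|\mathcal{L}^n+(|[u]|\wedge1)\mathcal{H}^{n-1}\restr J_u$ and conclude via Theorem~\ref{t:main-emanuele}. Your proposal treats this as routine, whereas skipping it is exactly the $p=1$ trap: without it the limit could a priori charge a purely $(n-1)$-unrectifiable set and the surface term would not reduce to $\beta_p\mathcal{H}^{n-1}(J_u\cup\partial^*A)$. (Minor: your formulas for $\varphi_p$ and $\beta_p$ drop the Jacobian factor $|\xi|^p$ coming from the slicing parametrisation; compare Lemma~\ref{limite}.)
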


For the precise definition of the bulk energy density $\varphi_p$ and of the surface energy density $\beta_p$ we refer to Lemma \ref{limite}.

In proving Theorem~\ref{compattezza} we addressed two main correlated hurdles. The first difficulty concerns the identification of a limit map $u$. In this respect, we may compare our setting to the existing literature concerning nonlocal approximation of Free Discontinuity problems. In particular, the recent works~\cite{Farroni-Scilla-Solombrino, Marziani-Solombrino, Scilla-Solombrino}, inspired by the corresponding results on the Mumford-Shah functional~\cite{Braides-DalMaso, Negri}, provide a nonlocal approximation of a class of Griffith-type functionals in terms of a bulk energy of the form
\begin{displaymath}
\frac{1}{\varepsilon} \int_{\Om} f \big( \varepsilon W(e(u)) * \rho_{\varepsilon}\big)\, \di x  
\end{displaymath}
featuring the convolution of a volume density~$W(e(u))$ for a Sobolev map~$u$ and a potential~$f$ whose behavior is similar to that of $\arctan$ in~\eqref{e:intro1}. The compactness issues in the mentioned articles are overcome due to the presence of an a priori differential structure of the displacement~$u$, relying on compactness in ${\rm GSBD}$~\cite{Chambolle-Crismale, Chambolle-Crismale-2}. We remark that such a direct approach is not applicable in our framework, as $u_{\varepsilon}$ are only measurable fields with no control on their symmetric gradient. A similar issue appeared in a discrete fashion in~\cite{Alicandro-Focardi-Gelli, Crismale-Scilla-Solombrino} on a deterministic lattice, which indeed proposed a discrete finite-difference approximation of Griffith-type of energies in the spirit of~\cite{Bach-Cicalese-Ruf, Chambolle-95, Chambolle-99, Ruf} with a finite range of interaction. Their approach to compactness builds upon the construction of ${\rm GSBD}$-competitors with uniformly bounded energy, which is facilitated by the lack of long-range interactions. This approach allowed again for the application of compactness in ${\rm GSBD}$. The continuous nature of $\mathcal{F}^{p}_{\varepsilon}$ and its infinite horizon make the adaptation of the strategy of~\cite{Crismale-Scilla-Solombrino} rather challenging.


It turned out that in our setting we can work directly on the sequence~$u_{\varepsilon}$, bypassing the modification approach of~\cite{Crismale-Scilla-Solombrino}. Our method relies on Fr\'echet-Kolmogorov, and thus amounts to prove equi-continuity of translations. This technical step is provided in the proof of Theorem~\ref{compattezza} and shares common ideas with~\cite{AlmiTasso}, where the proof of the compactness theorem of \cite{Chambolle-Crismale} was first revisited, avoiding the use of Korn and Korn-Poincar\'e inequalities. Once again, the lack of a symmetric gradient and the structure of~$\mathcal{F}_{\varepsilon}$, which features an extra integration over the directions~$\xi \in \R^{n}$, do not allow us to select a preferred basis to control translations, as it was the case in~\cite{AlmiTasso}. Nevertheless, the slicing properties of the functionals~$\mathcal{F}_{\varepsilon}$ make it possible to control the translations of the maps~$\arctan(u_{\varepsilon}(x)\cdot \xi )$ both with respect to $x$ and~$\xi$. Pointwise convergence out of an exceptional set $A\subseteq \Om$ is thus recovered by Fr\'echet-Kolmogorov Theorem. 

The next step in the proof of Theorem~\ref{compattezza} consists in showing that $u$ belongs to ${\rm GSBD}(\Om)$, yielding the characterization of the domain of the $\Gamma$-limit of $\mathcal{F}^{p}_{\varepsilon}$. Broadly speaking, the non-local nature of the approximating functional $\mathcal{F}^p_\varepsilon$ results in a limiting function space consisting of measurable vector fields that exhibit generalized bounded deformation in a weaker sense. To explain this phenomenon more precisely, recall that $u \in {\rm GBD}(\Omega)$ if and only if $u$ is a measurable vector field and there exists a finite Radon measure $\lambda$ on $\Omega$ such that, for every $\xi \in \mathbb{S}^{n-1}$, the generalized directional variation $\hat{\mu}_u^\xi$ of $u \cdot \xi$, as introduced in \cite[Definition 4.10]{DM2013}, satisfies:
\begin{equation}
\label{e:defgbd}
\hat{\mu}^\xi_u(B) \leq \lambda(B), \quad \text{for every Borel set } B \subset \Omega.
\end{equation}

By a slicing argument,
a similar approach to that proposed by M. Gobbino in~\cite{Gobbino} gives that the limit field $u \in L^{0} (\Om; \R^{n})$ satisfies condition~\eqref{e:defgbd} in an averaged form with respect to $\xi$, namely, 
\begin{equation}
\label{e:defgbde}
\int_{\mathbb{S}^{n-1}} \hat{\mu}^\xi_u(B) \, \di \mathcal{H}^{n-1}(\xi) \leq \lambda(B), \quad \text{for every Borel set } B \subset \Omega. 
\end{equation}
The fact that \eqref{e:defgbd} is replaced in our setting by \eqref{e:defgbde} is a huge obstacle to establish the structural properties of the limiting function space. In particular, the technique presented in \cite{DM2013} for deriving the GBD-structural properties fails, and we need to rely on the more recent integral geometric approach introduced in \cite{AlmiTasso2}. The main difference from the approach in \cite{AlmiTasso2} lies in the role of the flatness of $\mathbb{R}^n$, where rescaling and translating the domain preserve straight lines as geodesics, maintaining invariance in the associated GBD-space. Furthermore, the ${\rm GDB}$-theory developed in~\cite{DalMaso} provides a quite general method to relate the jump set of~$u$ to the behavior of codimension one slices, which we are able to adapt to our setting. We remark that such an approach seems not to be successful in a Riemannian manifold. In Propositions~\ref{p:keyprop}--\ref{p:keyprop1}, we leverage the above features to avoid relying on a Korn-Poincar\'e-type inequality to estimate the dimensionality of the set on which the jump of all one-dimensional slices of~$u$ concentrate.
Ultimately, we conclude in Theorem \ref{t:main-emanuele} that the limit map $u$ belongs to $\text{GSBD}(\Om)$, and thus characterize the domain of the $\Gamma$-limit of~$\mathcal{F}^{p}_{\varepsilon}$. It is worth noting that, although it is not used in the present paper, this method also allows one to address the case of jump sets with infinite length, which may be of interest for cohesive models obtained as the $\Gamma$-limit of finite-difference approximating functionals, as in \cite{Gobbino-Mora}. For the detailed definitions of the relevant quantities and the proof of the structural properties, we refer the reader to Sections \ref{s:main results} and \ref{s:structure}, respectively. 
\medskip

\noindent{\bf An alternative approach.} 

\noindent After the first version of this paper was submitted as a Preprint on
arXiv, a new characterization of GSBD with finite jump was proved in \cite{ChaCri25}, from which it
was deduced that our limiting space is GSBD. This characterization was
based on controlling the measures $\hat{\mu}^\xi_u$ through finitely many direction $\xi$, which turns out to be equivalent to a control on average. In this new
version we propose a totally different perspective, by first deriving
fine properties of the limiting space and then proving it coincides with
GSBD via some integro-geometrical arguments, hence providing an
alternative proof to \cite{ChaCri25}.


\medskip

Eventually, we obtain the following $\Gamma$-convergence result for the sequence $\mathcal{F}^{p}_{\varepsilon}$.

\begin{theorem}[$\Gamma$-convergence]
\label{t:maingamma}
   Let $\Omega \subset \mathbb{R}^n$ be open. For every $p\ge 1$ the family of functionals $\{\mathcal{F}^p_\varepsilon\}_{\varepsilon>0}$ $\Gamma$-converges in $L^0(\Omega;\mathbb{R}^n)$ to the Griffith-type functional $\mathcal{F}^p$ defined as  
   \begin{equation}
\label{e:limitF}
    \mathcal{F}^{p} (u, \Om):= \begin{cases}
        \displaystyle \int_{\Omega}  \varphi_{p} (e(u)) \de x +  \beta_{p}  \mathcal{H}^{n-1}(J_u)& \text{for $u \in {\rm GSBD}(\Om)$,}\\[2mm]
        \displaystyle \infty &\text{otherwise in $L^{0} (\Om; \R^{n})$,}
    \end{cases}
\end{equation}
where $\varphi_{p} $ and $\beta_{p}$  are given as in Theorem~\ref{compattezza}.
\end{theorem}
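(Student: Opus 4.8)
The plan is to establish the $\Gamma$-$\liminf$ and $\Gamma$-$\limsup$ inequalities separately, leaning heavily on Theorem~\ref{compattezza} for the lower bound and on an explicit recovery-sequence construction for the upper bound. For the $\Gamma$-$\liminf$ inequality, let $u_\varepsilon \to u$ in $L^0(\Omega;\R^n)$ with $\liminf_\varepsilon \F^p_\varepsilon(u_\varepsilon,\Omega) < \infty$ (otherwise there is nothing to prove); passing to a subsequence attaining the liminf and such that the energies are equibounded, Theorem~\ref{compattezza} provides a further subsequence along which $u_{\varepsilon_k}$ converges pointwise a.e.\ on $\Omega\setminus A$ for a set $A$ of finite perimeter. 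Since $u_\varepsilon \to u$ already in $L^0$, the blow-up set $A$ must be Lebesgue-negligible (on $A$ the pointwise limit is $+\infty$ in modulus, incompatible with $L^0$-convergence to a finite limit), so $\mathcal H^n(A)=0$, hence $\partial^* A$ contributes nothing and the extension agrees with $u$ a.e.; moreover the limit in Theorem~\ref{compattezza} is the $L^0$-limit $u$. Then \eqref{semi378} gives exactly
\[
   \liminf_{k\to\infty}\F^p_{\varepsilon_k}(u_{\varepsilon_k},\Omega) \ge \int_\Omega \varphi_p(e(u))\,\de x + \beta_p\,\HH^{n-1}(J_u) = \F^p(u,\Omega),
\]
with $u \in {\rm GSBD}(\Omega)$; a standard argument (the liminf along the original sequence is bounded below by the liminf along any subsequence realizing it) upgrades this to the full family.

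For the $\Gamma$-$\limsup$ inequality I would first reduce to a dense class of competitors and then use density/lower-semicontinuity of the target functional to conclude. A natural dense class in ${\rm GSBD}(\Omega)$ (with respect to a topology in which $\F^p$ is lower semicontinuous and along which one can pass to the limit) consists of functions that are, say, piecewise smooth with polyhedral jump set, or the $w$-approximations produced by the density theorems for $GSBD$ (in the spirit of Chambolle--Crismale). For such a regular $u$, one constructs the recovery sequence by taking $u_\varepsilon := u$ itself (or a mild mollification away from $J_u$): one then estimates $\F^p_\varepsilon(u,\Omega)$ directly. The bulk region, where $((u(x+\varepsilon\xi)-u(x))\cdot\xi)^2/\varepsilon$ is of order $\varepsilon$, contributes via $\arctan(t)\approx t$ an integral that converges to $\int_\Omega \varphi_p(e(u))\,\de x$ by a Riemann-sum/Taylor-expansion argument combined with dominated convergence (this is where the precise definition of $\varphi_p$ as an average over directions $\xi$ of the quadratic form $\xi\mapsto(e(u)\xi\cdot\xi)^2$ against $e^{-|\xi|^2}$, raised appropriately through the $L^p$-sup, is read off); the jump region, a tubular $\varepsilon$-neighborhood of $J_u$, contributes the surface term, since there $\arctan$ saturates to $\approx \pi/2$ and the $\tfrac1\varepsilon$ prefactor against a set of measure $\sim\varepsilon\,\HH^{n-1}(J_u)$ produces $\beta_p\,\HH^{n-1}(J_u)$ after the same average over $\xi$ and the $\sup$ over ball families $\mathscr B$ is evaluated. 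The supremum over $\mathscr B$ here is benign on regular $u$: one checks that it is attained in the limit by exhausting $\Omega$ (or $\Omega\setminus J_u$) with balls, and that the $L^p$-aggregation over disjoint balls, combined with the localization of bulk and jump contributions, reproduces exactly the additive structure $\int\varphi_p(e(u))\,\de x + \beta_p\HH^{n-1}(J_u)$ — this is precisely the point where the constants $\varphi_p,\beta_p$ of Theorem~\ref{compattezza} are shown to be the sharp ones, so consistency of the two theorems must be verified here. Finally, for general $u\in{\rm GSBD}(\Omega)$ one invokes the density result to find regular $u_j\to u$ with $\F^p(u_j,\Omega)\to\F^p(u,\Omega)$, applies the construction to each $u_j$, and extracts a diagonal sequence using the lower semicontinuity of the $\Gamma$-$\limsup$ and a metrizability/diagonalization lemma for $\Gamma$-convergence.

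The main obstacle I anticipate is the $\Gamma$-$\limsup$ on the jump set, specifically the interaction between the $\sup_{\mathscr B}$ in \eqref{e:genfun} and the localization of the recovery sequence near $J_u$. One must show that the supremum over finite families of disjoint balls does not overshoot the surface energy $\beta_p\HH^{n-1}(J_u)$ in the limit — i.e., that splitting $\Omega$ into many small balls and summing the $L^p$-norms of the directional energies cannot create extra concentration beyond what a single bulk-plus-surface density accounts for. This requires a careful covering argument showing that for the regular competitor the directional energy $F_{\varepsilon,\xi}(u,B)$ is, up to lower-order terms, additive-with-$L^p$-defect in a controlled way, so that the optimal $\mathscr B$ in the limit is the trivial one (or an exhaustion), matching the value in Theorem~\ref{compattezza}. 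A secondary technical point is ensuring the density class is simultaneously dense for the $GSBD$-topology used in the $\liminf$ and admits the explicit energy estimate; here one should cite the relevant $GSBD$ density theorem and verify continuity of $u\mapsto\int\varphi_p(e(u))\,\de x+\beta_p\HH^{n-1}(J_u)$ along the approximating sequences it provides.
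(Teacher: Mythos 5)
Your $\Gamma$-liminf argument is fine and is exactly what the paper intends: $L^0$-convergence forces the exceptional set $A$ of Theorem~\ref{compattezza} to be $\mathcal{L}^n$-negligible, hence $\mathcal{H}^{n-1}(\partial^*A)=0$, and \eqref{semi378} gives the lower bound. The problem is the $\Gamma$-limsup, where what you yourself flag as ``the main obstacle I anticipate'' is a genuine gap, not a technicality: you never show that the supremum over ball families in \eqref{e:genfun} does not overshoot $\int_\Omega\varphi_p(e(u))\,\de x+\beta_p\HH^{n-1}(J_u)$ along your recovery construction. A Taylor/tubular-neighborhood estimate gives, for each fixed ball, an upper bound only up to error terms, and summing $L^p$-aggregated quantities over arbitrarily fine families of disjoint balls with per-ball errors is precisely the situation in which such errors can accumulate; nothing in your sketch controls this uniformly in $\mathscr{B}$ and in $\varepsilon$ simultaneously. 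In addition, your reduction to a dense class via GSBD density theorems in the spirit of Chambolle--Crismale imports hypotheses the theorem does not make: those results are stated for bounded Lipschitz domains, whereas here $\Omega$ is an arbitrary open set (indeed the $\sup_{\mathscr{B}}$ structure was introduced exactly to avoid boundary regularity), and you would further need continuity of $\mathcal{F}^p$ along the approximants and a diagonalization, none of which is supplied.

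The paper closes both issues at once by a much more direct route, which you could have taken: the constant sequence $u_\varepsilon:=u$ is a recovery sequence for \emph{every} $u\in{\rm GSBD}(\Omega)$, in the strong form \eqref{gamma2}. The reason is the elementary one-dimensional estimate \eqref{3.22gob}, $F_{\varepsilon}(v,I\cap(I-\varepsilon))\le \frac{\pi}{2}\mathcal{MS}_{\frac{2}{\pi}}(v,I)$, which holds for every $\varepsilon>0$, every interval $I$ and every $v\in{\rm GSBV}(I)$ (split $I$ into the points whose $\varepsilon$-segment meets $J_v$, where $\arctan$ is bounded by $\pi/2$, and the rest, where $\arctan x\le x$ and Jensen/Fubini give the Dirichlet energy). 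Since each slice $B^\xi_y$ of a ball is an interval, this bounds $F_{\varepsilon,\xi}(u,B)$ by the sliced Mumford--Shah energy \emph{pointwise in $\varepsilon$, $\xi$, $y$ and $B$}, so $\mathcal{F}^p_\varepsilon(u,\Omega)$ is bounded, uniformly in $\varepsilon$, by $\sup_{\mathscr{B}}\sum_B\frac{\pi}{2}\bigl(\int_{\R^n}\bigl(\int_{\Pi^\xi}\mathcal{MS}_{\frac{2}{\pi}}(\hat u^\xi_y,B^\xi_y)\,\de\mathcal{H}^{n-1}(y)\bigr)^p|\xi|^pe^{-|\xi|^2}\de\xi\bigr)^{1/p}$; no interchange of limits with the supremum is ever needed. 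The identification of this last quantity with $\int_\Omega\varphi_p(e(u))\,\de x+\beta_p\HH^{n-1}(J_u)$ is exactly Lemma~\ref{limite}, proved by a Besicovitch differentiation argument on the measure generated by the ball-supremum; that lemma is where the ``no overshoot'' question is actually settled (and where $\varphi_p$, $\beta_p$ are defined), so any complete proof must either invoke it or reproduce its content --- your proposal does neither.
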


Combining Theorem~\ref{compattezza} with the $\Gamma$-convergence of Theorem~\ref{t:maingamma}, we further deduce convergence of minimisers of~$\mathcal{F}_{\varepsilon}^p$ to minimisers of~$\mathcal{F}^p$ under suitable Dirichlet boundary conditions (cf.~Theorem~\ref{t:convqmin}).


\medskip

\noindent {\bf Organization of the paper.}
The paper is organized as follows. In Section \ref{s:main results}, we collect a few preliminary definitions and results. Section \ref{s:structure} is devoted to establish the main properties of the spaces of limiting deformations. Sections \ref{s:proof-compactness} and \ref{s:gamma-conv} are devoted to the proofs of our compactness, and $\Gamma$-convergence result, respectively.
\medskip

\noindent {\bf Outlook.}
The analysis provided in this work offers the opportunity to explore several further research lines. On the one hand, the compactness argument of Theorem~\ref{compattezza} and the slicing technique of Theorem~\ref{t:maingamma} may be further tested against discrete finite-difference approximations on point clouds of vectorial free discontinuity problems, in the spirit of~\cite{Braides, Caroccia-Chambolle-Slepcev, Trillos-Slepcev} for the Total Variation, the Mumford-Shah, and the perimeter functionals. Such research line is the subject of a preprint which is currently in preparation.

On the other hand, the use of a slicing argument for the study of the variational limit restricts the choice of the approximating sequence~$\mathcal{F}_{\varepsilon}$, which in turn determines the class of admissible densities~$\varphi_{p}$ in~\eqref{e:limitF}. The study of more general approximations and of integral representation formulas in the spirit of~\cite{Cortesani, Alicandro-Focardi-Gelli, Crismale-Scilla-Solombrino} will be the subject of future investigation. A possible strategy to generalize our result to the case of arbitrary Lam\'e constants could be to rely on the presence of further discrete-divergence terms, along the lines of \cite{Alicandro-Focardi-Gelli, Crismale-Scilla-Solombrino}. The extension of our compactness result and of our identification of the limiting domain for such a model would be possible without any change to the structure of the current proof. The associated Gamma-convergence analysis, on the other hand, deserves the development of a novel representation theory.

\section{Definitions and notation}
\label{s:main results}

Recall the functionals $\mathcal{F}^p_\varepsilon$ and $F_{\varepsilon,\xi}$ defined in the introduction. Given  $A \subset \R$, we further set
\begin{equation*}
    F_{\varepsilon}(u,A):=\frac{1}{\varepsilon} \int_{A} \arctan  \left ( \frac{( u(t+\varepsilon )-u(t))^2}{\varepsilon} \right )  \de t,
\end{equation*}
for every measurable function $u\colon B \subset \mathbb{R} \to \R$ such that $A \subset B \cap (B-\varepsilon)$.
For the sequel it is convenient to introduce the following class of Mumford-Shah and Griffith functionals. For $\gamma>0$,
the former reads as
\begin{equation*}
    \mathcal{MS}_{\gamma}(u,\Omega) :=\begin{cases} 
    \gamma \int_{\Omega} |\nabla u(x)|^2 \de x+  \mathcal{H}^{n-1}(J_u), \quad  u \in \text{GSBV}(\Omega),\\
    +\infty, \qquad  u \in L^1_{loc}(\Omega)\setminus \text{GSBV}(\Omega).
    \end{cases}
\end{equation*}
For $\lambda >0$, we define the Griffith functional by
\begin{equation*}
    \mathcal{G}_{\lambda}(u,\Omega) :=\begin{cases} 
    \lambda \int_{\Omega} | e(u)(x)|^2 \de x+ \mathcal{H}^{n-1}(J_u), \quad  u \in \text{GSBD}(\Omega),\\
    \infty, \qquad  u \in L^0(\Omega;\R^n)\setminus \text{GSBD}(\Omega).
    \end{cases}
\end{equation*}

For every  $\xi \in \R^{n} \setminus \{0\}$, every $x \in \mathbb{R}^n$, and every $E\subset \R^{n}$, we define
\begin{displaymath}
E^{\xi}_{x}:= \{ t \in \R: \, x + t\xi \in E\}\,.
\end{displaymath}
For $u \colon E \to \R^{n}$ we define $\hat{u}^{\xi}_{x} \colon E^{\xi}_{x} \to \R$ as
\[
\hat{u}^{\xi}_{x} (t) := u(x + t\xi) \cdot \xi.
\]
For $v \colon E \to \R$,  we denote by $v^{\xi}_{x}\colon E^{\xi}_{x} \to \R$ the function $v^{\xi}_{x} (t) := v(x + t\xi)$. For $A \subset \R$ open and $v \colon A \to \R$ measurable, we set $J^{1}_{v}:=\{t \in J_{u} \cap A: \,| v^{+}(t) - v^{-}(t)| > 1 \}$.
\\

The proof of the following measurability property is postponed to Section \ref{s:structure}.

\begin{lemma}
\label{l:measfin}
    Let $u \colon \Om\to \R^n$ be $\mathcal{L}^{n}$-measurable. Assume that for $\mathcal{H}^{n-1}$-a.e. $\xi \in \mathbb{S}^{n-1}$, for $\mathcal{H}^{n-1}$-a.e.~$y \in \Pi^{\xi}$, the function $\hat{u}^{\xi}_{y}$ belongs to ${\rm BV}_{loc} (\Om^{\xi}_{y})$. Then, for every open set $U \subset \Om$ the map
\begin{equation}
\label{e:measfin}
(x,\xi) \mapsto | D\hat{u}^{\xi}_{x}| (U^{\xi}_{x} \setminus J^{1}_{\hat{u}^{\xi}_{x}}) + \mathcal{H}^{0} (U^{\xi}_{x} \cap J^{1}_{\hat{u}^{\xi}_{x}})
\end{equation}
is $(\mathcal{L}^n \otimes \mathcal{H}^{n-1})$-measurable on $\Om \times \mathbb{S}^{n-1}$. In particular, for every open set $U \subset \Omega$ we have that the map
\begin{equation*}
    \xi  \mapsto \int_{\Pi^\xi} |D \hat u_y^\xi|( U_y^\xi \setminus J^1_{\hat u_y^\xi})+\mathcal{H}^0( U_y^\xi  \cap J^1_{\hat u_y^\xi}) \de \mathcal{H}^{n-1}(y)
\end{equation*}
is $\mathcal{H}^{n-1}$-measurable on $\mathbb{S}^{n-1}$.
\end{lemma}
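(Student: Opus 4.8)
The plan is to realise the map in \eqref{e:measfin} as a \emph{countable} supremum of explicit $(\mathcal L^n\otimes\mathcal H^{n-1})$-measurable functions of $(x,\xi)$, the building blocks being one–sided Lebesgue values of $u$ along lines and indicators of slices of $U$.

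\textbf{Preliminary reductions.} First I would note that, up to modifying $u$ on an $\mathcal L^n$-null set (which changes $\hat u^\xi_x$ only for $(x,\xi)$ in an $(\mathcal L^n\otimes\mathcal H^{n-1})$-null set and hence affects none of the quantities in \eqref{e:measfin} for a.e.\ $(x,\xi)$), the map $(x,\xi,t)\mapsto\hat u^\xi_x(t)=u(x+t\xi)\cdot\xi$ is $(\mathcal L^n\otimes\mathcal H^{n-1}\otimes\mathcal L^1)$-measurable, since the affine map $(x,\xi,t)\mapsto x+t\xi$ pulls $\mathcal L^n$-null sets back to null sets (Fubini along the lines $\{y+s\xi\}$). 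Writing $U=\bigcup_j U_j$ with $U_j$ open, $\overline{U_j}$ compact and contained in $\Omega$, and $\overline{U_j}\subset U_{j+1}$, and using that $B\mapsto|D\hat u^\xi_x|(B\setminus J^1_{\hat u^\xi_x})+\mathcal H^0(B\cap J^1_{\hat u^\xi_x})$ is a Radon measure on $\Omega^\xi_x$ (hence monotone in the open argument $B$ and continuous along increasing unions), it suffices to prove the measurability for each $U_j$; thus I may assume $\overline U$ compact $\subset\Omega$, so that for a.e.\ $(x,\xi)$ — here the hypothesis of the lemma together with Fubini enters — the slice $\hat u^\xi_x$ is ${\rm BV}$ on a neighbourhood of the relatively compact set $\overline{U^\xi_x}$.

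\textbf{The one–dimensional formula.} The core step is the identity, valid for an open $I\subset\R$, $v\in{\rm BV}_{loc}(I)$, $W\subset I$ open, and $v^+$ the right–continuous representative of $v$:
\[
|Dv|(W\setminus J^1_v)+\mathcal H^0(W\cap J^1_v)=\sup_{\mathcal A}\ \sum_{(a,b)\in\mathcal A}\ \sup_{a<q_0<\cdots<q_m<b,\ q_k\in\Q}\ \sum_{k=1}^{m}\min\bigl(|v^+(q_k)-v^+(q_{k-1})|,\,1\bigr),
\]
where $\mathcal A$ ranges over finite, pairwise disjoint families of intervals $(a,b)$ with $[a,b]\subset W$. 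The inequality ``$\le$'' is termwise: $\min(|v^+(q_k)-v^+(q_{k-1})|,1)\le|Dv|((q_{k-1},q_k]\setminus J^1_v)+\mathcal H^0((q_{k-1},q_k]\cap J^1_v)$, distinguishing whether $(q_{k-1},q_k]$ contains a jump of height $>1$. For ``$\ge$'': refining a partition inside $W\setminus J^1_v$ makes the truncation inactive and recovers the absolutely continuous, Cantor and small–jump parts of $|Dv|$, while straddling each large jump by a consecutive pair contributes exactly $1$; the reduction to rational partition points uses right–continuity of $v^+$; and since the left–hand side is a Radon measure and every point of the open set $W$ lies in some $[a,b]\subset W$, the outer supremum exhausts $W$. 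I expect this formula to be the main obstacle: one must package the quantity through countably many ($\Q$–indexed) evaluations of a single canonical representative, while reconciling the truncation $\min(\cdot,1)$ with the large–jump set $J^1_v$, the openness of $W$ (hence staying away from $\partial W$), and the interplay between pointwise and distributional variation.

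\textbf{Measurability of the pieces and conclusion.} I then apply the formula with $v=\hat u^\xi_x$, $I=\Omega^\xi_x$, $W=U^\xi_x$. For fixed $a<b$ the map $(x,\xi)\mapsto\mathbbm 1\{[a,b]\subset U^\xi_x\}$ is Borel, since $[a,b]\subset U^\xi_x$ iff $\min_{t\in[a,b]}\operatorname{dist}(x+t\xi,\R^n\setminus U)>0$ and the left–hand side is continuous in $(x,\xi)$. For fixed $q\in\Q$ the map $(x,\xi)\mapsto(\hat u^\xi_x)^+(q)$ is $(\mathcal L^n\otimes\mathcal H^{n-1})$-measurable where defined, because the right–continuous representative of a ${\rm BV}$ function equals its right Lebesgue value, so $(\hat u^\xi_x)^+(q)=\lim_{j\to\infty}j\int_q^{q+1/j}u(x+t\xi)\cdot\xi\,\de t$, a pointwise limit of maps measurable in $(x,\xi)$ by Tonelli. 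Since the right–hand side of the formula is a countable supremum of finite sums of terms $\min(|\cdot-\cdot|,1)$ formed from the values $(\hat u^\xi_x)^+(q)$, $q\in\Q$, weighted by the indicators above, the map \eqref{e:measfin} is $(\mathcal L^n\otimes\mathcal H^{n-1})$-measurable (on the null set where $\hat u^\xi_x\notin{\rm BV}_{loc}$ both sides are read as $+\infty$). For the integrated statement the same argument runs verbatim with $x\in\R^n$ in place of $x\in\Omega$ (the slicing hypothesis survives Fubini), yielding an $(\mathcal L^n\otimes\mathcal H^{n-1})$-measurable function $\Psi$ on $\R^n\times\mathbb{S}^{n-1}$ coinciding, for $y\in\Pi^\xi$, with the integrand in the statement; collinear base points give translated slices, so $\Psi(y+s\xi,\xi)=\Psi(y,\xi)$, and after the reduction $\overline U\subset B_R$, Fubini along $x=y+s\xi$ gives, for every $R'>0$,
\[
\int_{\Pi^\xi}\Psi(y,\xi)\,\de\mathcal H^{n-1}(y)=\frac1{2R'}\int_{\{\,|x-\langle x,\xi\rangle\xi|<R,\ |\langle x,\xi\rangle|<R'\,\}}\Psi(x,\xi)\,\de x,
\]
whose right–hand side is $\mathcal H^{n-1}$-measurable in $\xi$ by Tonelli (the domain of integration being Borel in $(x,\xi)$); undoing the exhaustion $U=\bigcup_j U_j$ by monotone convergence then finishes the proof.
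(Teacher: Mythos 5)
Your proposal is correct, but it takes a genuinely different route from the paper's. The paper reduces \eqref{e:measfin} to the characterization in Proposition~\ref{p:equality-mu-xi0.1} (i.e.\ to \cite[Theorem 3.8]{DM2013}): the truncated variation is written as a countable supremum of $\sum_i |D(\tau_i(\hat u^\xi_x))|(U_i)$ over a countable $C^1_{loc}$-dense family $\hat{\mathcal T}\subset\mathcal T$ of smooth truncations, and the joint measurability of $(x,\xi)\mapsto |D\tau(\hat u^\xi_x)|(U^\xi_x)$ is then obtained by duality against a countable dense set of test functions, combined with an exhaustion of $U$. You instead prove a bespoke one-dimensional identity expressing $|Dv|(W\setminus J^1_v)+\mathcal H^0(W\cap J^1_v)$ as a supremum of truncated increments $\min(|v^+(q_k)-v^+(q_{k-1})|,1)$ of the right Lebesgue representative at rational points, whose building blocks, namely $(x,\xi)\mapsto(\hat u^\xi_x)^+(q)$ and the indicators of $\{[a,b]\subset U^\xi_x\}$, are manifestly jointly measurable; this buys you independence from \cite[Theorem 3.8]{DM2013} and from the duality step, at the price of having to establish the identity yourself. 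Your sketch of that identity (isolating the finitely many jumps of size $>1$, refining so that the truncation becomes asymptotically inactive elsewhere, passing to rational nodes by right-continuity, exhausting the open set) contains the right ingredients, and this is indeed where the real work lies; two small points should be fixed when writing it out: restrict the intervals $(a,b)$ to rational endpoints so that the supremum is genuinely countable (harmless, since any admissible $[a,b]\subset W$ can be shrunk to a rational one containing the given partition), and phrase the ``truncation inactive'' step as a limiting statement, since an increment straddling a jump of size exactly $1$ may exceed $1$ by a vanishing amount. Finally, you make explicit the ``in particular'' assertion via translation invariance along $\xi$, a slab average, and Tonelli, which the paper leaves implicit; that argument is fine.
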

Next, we recall a measurability property originally stated in \cite[Lemma~3.6]{DM2013}.

\begin{lemma}
\label{l:measdalmaso}
    Let $v \colon \Om\to \R$ be $\mathcal{L}^{n}$-measurable and let $\xi \in \mathbb{S}^{n-1}$. Assume that for $\mathcal{H}^{n-1}$-a.e.~$y \in \Pi^{\xi}$, the function $v^{\xi}_{y}$ belongs to ${\rm BV}_{loc} (\Om^{\xi}_{y})$. Then, for every Borel set $B \subset \Om$ the map
\begin{equation*}
y \mapsto | Dv^{\xi}_{y}| (B^{\xi}_{y} \setminus J^{1}_{v^{\xi}_{y}}) + \mathcal{H}^{0} (B^{\xi}_{y} \cap J^{1}_{v^{\xi}_{y}})
\end{equation*}
is $\mathcal{H}^{n-1}$-measurable on $\Pi^\xi$.
\end{lemma}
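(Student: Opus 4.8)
The plan is to follow the scheme of \cite[Lemma~3.6]{DM2013}. For $\mathcal H^{n-1}$-a.e.\ $y\in\Pi^\xi$ the set function
\[
\nu_y(B):=|Dv^\xi_y|\big(B^\xi_y\setminus J^1_{v^\xi_y}\big)+\mathcal H^0\big(B^\xi_y\cap J^1_{v^\xi_y}\big),\qquad B\subset\Om\ \text{Borel},
\]
is a well defined positive Borel measure on $\Om^\xi_y$ which is finite on compactly contained intervals, and the goal is to prove that $y\mapsto\nu_y(B)$ is measurable for every Borel $B$. First I would perform an orthogonal change of variables taking $\xi$ to $e_n$ (this transforms $\Om$ and $v$ accordingly and preserves $\mathcal L^n$-measurability), so that $\Pi^\xi$ and $\mathcal H^{n-1}\restr\Pi^\xi$ become $\R^{n-1}$ and $\mathcal L^{n-1}$, and, writing $x=(y,t)$ with $y\in\R^{n-1}$, $v^\xi_y(t)=v(y,t)$ and $\Om^\xi_y=\{t:(y,t)\in\Om\}$.

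Next I would reduce to open sets. Fix open sets $\Om_m\uparrow\Om$ with $\overline{\Om_m}$ compact and contained in $\Om$, so that $\nu_y((\Om_m)^\xi_y)<\infty$ for a.e.\ $y$. Assuming it is known that $y\mapsto\nu_y(U^\xi_y)$ is $\mathcal L^{n-1}$-measurable for every open $U\subset\Om$, the family of Borel sets $B\subset\Om$ for which $y\mapsto\nu_y((B\cap\Om_m)^\xi_y)$ is measurable contains the $\pi$-system of open sets (apply the hypothesis to $U\cap\Om_m$) and, using the finiteness of $\nu_y\restr(\Om_m)^\xi_y$, is closed under proper differences and increasing unions, hence is a $\lambda$-system; by Dynkin's theorem it is the whole Borel $\sigma$-algebra, and letting $m\to\infty$ (monotone convergence) settles the general Borel case.

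The analytic core is then the one-dimensional identity, with $\vartheta(s):=\min(|s|,1)$,
\[
|Dw|\big((a,b)\setminus J^1_w\big)+\mathcal H^0\big((a,b)\cap J^1_w\big)=\lim_{r\to0^+}\frac1r\int_a^{b-r}\vartheta\big(w(s+r)-w(s)\big)\,\de s
\]
for every bounded interval $(a,b)$ and every $w\in BV((a,b))$; note the right-hand side is unchanged if $w$ is replaced by any $\mathcal L^1$-equivalent function, so in particular the limit exists. I would prove it by splitting $Dw$ into its diffuse part and its atomic part: since $\vartheta$ is subadditive and coincides with $|\cdot|$ near the origin, the right-hand side recovers the mass of the diffuse part together with the jumps of height $\le1$, while across a jump of height $>1$ it saturates at $1$; the rigorous argument treats finitely many jumps at a time and passes to the limit, as in the one-dimensional computations of \cite{DM2013, Gobbino}. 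Granting this, for open $U\subset\Om$ and a.e.\ $y$ the set $U^\xi_y$ is a countable disjoint union of open intervals, and applying the identity on each component and summing (monotone convergence) gives
\[
\nu_y(U^\xi_y)=\lim_{r\to0^+}\frac1r\int_{\R}\vartheta\big(v(y,s+r)-v(y,s)\big)\,\chi_r(y,s)\,\de s,
\]
where $\chi_r(y,s)=1$ if $\{y\}\times[s,s+r]\subset U$ and $\chi_r(y,s)=0$ otherwise. For each fixed $r$ the integrand is $\mathcal L^n$-measurable in $(y,s)$: the first factor is continuous in $\big(v(y,s+r),v(y,s)\big)$, and $\chi_r$ is Borel because $(y,s)\mapsto\min_{\sigma\in[0,r]}\mathrm{dist}\big((y,s+\sigma),\Om\setminus U\big)$ is continuous. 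Hence, by Fubini, $y\mapsto\frac1r\int_\R\vartheta(\cdots)\chi_r(y,\cdot)$ is $\mathcal L^{n-1}$-measurable for every $r$, and since the limit as $r\to0^+$ exists it equals the limit along $r\in\Q$, so $y\mapsto\nu_y(U^\xi_y)$ is a.e.\ a countable $\limsup$ of measurable functions, hence measurable.

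The main obstacle is precisely the one-dimensional identity above; everything else is routine measure theory, the only delicate points being the merely local finiteness of the $\nu_y$ (handled via the exhaustion $\Om_m$ and Dynkin's theorem) and the joint measurability of the geometric cut-off $\chi_r$. Since this lemma is exactly \cite[Lemma~3.6]{DM2013}, one may also simply invoke that reference.
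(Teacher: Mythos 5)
Your proposal is correct, and its conclusion matches what the paper actually does: the paper gives no proof of this lemma at all, it simply recalls it from \cite[Lemma~3.6]{DM2013}, which is exactly your closing remark. The self-contained argument you supply is, however, a genuinely different route from the one the paper uses for the companion statement, Lemma~\ref{l:measfin}: there the authors rely on Proposition~\ref{p:equality-mu-xi0.1}, i.e.\ the representation of $|Dw|(U\setminus J^1_w)+\mathcal{H}^0(U\cap J^1_w)$ as a supremum over a countable dense family of truncations $\tau\in\hat{\mathcal T}$ of the masses $|D(\tau(w))|(U_i)$, so that measurability in $y$ follows from writing the quantity as a countable supremum of maps $y\mapsto|D\tau(v^\xi_y)|(U^\xi_y)$, each measurable by duality with smooth test functions and Fubini; the extension from open $U$ to Borel $B$ is then the same exhaustion/regularity step you perform with Dynkin's theorem. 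Your route instead encodes the truncated variation through the finite-difference identity with $\vartheta=\min(|\cdot|,1)$, which buys a very concrete integrand that is jointly measurable for each fixed $r$ and makes the Fubini step transparent, at the price that the one-dimensional identity is the analytic core and you only sketch it: the upper bound is immediate from subadditivity of $\vartheta$, but the lower bound requires some care where window increments exceed the threshold $1$ even though no single jump does (clusters of moderate jumps plus diffuse mass), and the standard fix is to isolate the finitely many jumps of height close to or above $1$ in small intervals and partition the rest into subintervals of total variation at most $1$, where $\vartheta$ acts as the identity and the classical difference-quotient convergence $\tfrac1r\int|w(s+r)-w(s)|\,\de s\to|Dw|$ applies; with that detail supplied (or by invoking the corresponding one-dimensional computations in \cite{DM2013,Gobbino}, as you indicate), your argument is complete. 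In short: either route is fine, the truncation-supremum approach is the one consistent with how the paper handles Lemma~\ref{l:measfin}, while yours is an admissible, more hands-on alternative whose only non-routine ingredient is the sketched one-dimensional identity.
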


We are now in position to define the space ${\rm GBV}^\mathcal{E}{}(\Omega;\mathbb{R}^n)$.


\begin{definition}\label{d:gbvA}
We say that a $\mathcal{L}^n$-measurable function $u \colon \Omega \to \R^n$ belongs to the space $\textnormal{GBV}^{\mathcal{E}}(\Omega;\R^n)$ if  
for  $\mathcal{H}^{n-1}$-a.e.  $\xi \in \Ss^{n-1}$ and for $\mathcal{H}^{n-1}$-a.e. $y \in \Pi^\xi$ the function $\hat u_y^\xi \in \textnormal{BV}_{loc}(\Omega_y^\xi)$ and
    \begin{equation}
    \label{e:gbve}
         \int_{\mathbb{S}^{n-1}} \bigg( \int_{\Pi^\xi} |D \hat u_y^\xi|( \Omega_y^\xi \setminus J^1_{\hat u_y^\xi})+\mathcal{H}^0( \Omega_y^\xi  \cap J^1_{\hat u_y^\xi}) \de \mathcal{H}^{n-1}(y)  \bigg) \de \mathcal{H}^{n-1}(\xi) <\infty.
    \end{equation} 
    Moreover, we say that $u \in \textnormal{GSBV}^{\mathcal{E}}(\Omega;\mathbb{R}^n)$  if $u \in \textnormal{GBV}^{\mathcal{E}}(\Omega;\mathbb{R}^n)$ and $\hat u_y^\xi \in \textnormal{SBV}_{loc}(\Omega_y^\xi)$ for $\mathcal{H}^{n-1}$-a.e.   $\xi \in \mathbb{S}^{n-1}$ and for $\mathcal{H}^{n-1}$-a.e. $y \in \Pi^\xi$.
\end{definition}

\begin{remark}
Observe that the space $\textnormal{GBV}^{\mathcal{E}}(\Omega;\mathbb{R}^n)$ introduced in Definition~\ref{d:gbvA} does not coincide a priori with $\textnormal{GBD}(\Omega)$ introduced in~\cite[Definition~4.1]{DM2013}. This is because the control in \eqref{e:gbve} is not pointwise in $\xi$ but only in average with respect to the $\mathcal{H}^{n-1} \restr \mathbb{S}^{n-1}$-measure. Nevertheless, we will show in Section~\ref{s:griffith} that, under an additional assumption, actually $\textnormal{GSBV}^{\mathcal{E}}(\Omega;\mathbb{R}^n)=\textnormal{GSBD}(\Omega)$. 
\end{remark}

In view of the measurability property contained in Lemma \ref{l:measdalmaso} we provide the following definition.

\begin{definition}
Let $\Om \subset \R^{n}$ open and let $u \colon \Om \to \R^{n}$ be $\mathcal{L}^{n}$-measurable. Then for every $\xi \in \mathbb{S}^{n-1}$ we define the Borel regular measure $\hat{\mu}^\xi_u$ in $\Omega$ as follows. We define $\hat{\mu}^\xi_u$ on $\Omega$ as the unique Borel regular measure on $\Omega$ that satisfies for every open set $U \subset \Omega$ 
\begin{displaymath}
\hat{\mu}^{\xi}_{u} (U) := \int_{\Pi^{\xi}}| D\hat{u}^{\xi}_{y}| (U^{\xi}_{y} \setminus J^{1}_{\hat{u}^{\xi}_{y} } ) + \mathcal{H}^{0} (U^{\xi}_{y} \cap J^{1}_{\hat{u}^{\xi}_{y} }) \, \di \mathcal{H}^{n-1} (y),
\end{displaymath} 
whenever for $\mathcal{H}^{n-1}$-a.e.~$y \in \Pi^{\xi}$ the function $\hat{u}^{\xi}_{y}$ belongs to ${\rm BV}_{loc}(U^{\xi}_{y})$, while $\hat{\mu}^\xi_u(U)=\infty$ otherwise.   
\end{definition}

\begin{remark}
    Since the hypothesis of De Giorgi-Letta's Theorem \cite{AFP} are fulfilled, the extension of the set function $U \mapsto\hat{\mu}^\xi_u(U)$ to a Borel regular measure on $\Omega$ is uniquely guaranteed by the formula
\begin{equation*}
   \hat{\mu}^\xi_u(B) := \inf \{ \hat{\mu}^\xi_{u}(U) : B \subset U, \ U \subset \Omega \text{ open} \}.
\end{equation*}
\end{remark}

\begin{remark}
    In view of~\eqref{e:gbve}, if $u \in {\rm GBV}^{\mathcal{E}} (\Om)$ we have that $\hat{\mu}^{\xi}_u \in \mathcal{M}^{+}_{b} (\Om)$ for $\mathcal{H}^{n-1}$-a.e.~$\xi \in \mathbb{S}^{n-1}$. Moreover, for such $\xi$, we can make use of the measurability property contained in Lemma \ref{l:measdalmaso} to infer
    \[
    \hat{\mu}^\xi_u(B) = \int_{\Pi^{\xi}}| D\hat{u}^{\xi}_{y}| (B^{\xi}_{y} \setminus J^{1}_{\hat{u}^{\xi}_{y} } ) + \mathcal{H}^{0} (B^{\xi}_{y} \cap J^{1}_{\hat{u}^{\xi}_{y} }) \, \di \mathcal{H}^{n-1} (y),
    \]
    for every Borel set $B \subset \Omega$.
\end{remark}

\begin{definition}
    For $p \ge 1$, $u \in {\rm GBV}^{\mathcal{E}} (\Om)$, and $\xi \in \mathbb{S}^{n-1}$ such that $\hat{\mu}^{\xi}_{u} \in \mathcal{M}^{+}_{b} (\Om)$, we define $\hat{\mu}^{p}_{u}$ on $\Omega$ as the unique Borel regular measure on $\Omega$ that satisfies
    \begin{equation}
    \label{e:mup}
        \hat{\mu}^{p}_{u} (U):= \sup_{\mathscr{B}} \sum_{B \in \mathscr{B}} \bigg(\int_{\mathbb{S}^{n-1}} \hat{\mu}^{\xi}_{u}(B)^{p}\, \di \mathcal{H}^{n-1}(\xi) \bigg)^{\frac{1}{p}} \qquad \text{for $U\subset \Om$ open,}
    \end{equation}
    where $\mathscr{B}$ denotes any finite family of pairwise disjoints open balls contained in~$U$.
\end{definition}
Observe that for $p=1$ the following explicit representation of $\hat{\mu}^1_u$ holds true 
\begin{align}
    \label{e:hat-mu-repr}
        \hat{\mu}^{1}_{u} (U) = \int_{\mathbb{S}^{n-1}} \hat{\mu}^{\xi}_{u} (U)\, \di \mathcal{H}^{n-1}(\xi)\qquad \text{for $U\subset \Om$ open}.
\end{align}

\begin{remark}
    \label{r:mup>mu1}
We observe that, by virtue of Jensen's inequality, it holds true that 
\[
\mu^1_u \leq \mathcal{H}^{n-1}(\mathbb{S}^{n-1})^{\frac{p-1}{p}} \mu^p_u,
\]
in the sense of measures for every $p \ge 1$.
    
\end{remark}

\begin{remark}
    Also in this case, by De Giorgi-Letta's Theorem \cite{AFP} the extension of the set function $U \mapsto \hat{\mu}^p_u(U)$ to a Borel regular measure on $\Omega$ is uniquely guaranteed by  
\[
\begin{split}\
\hat{\mu}^{p}_{u}(B) := \inf \{ \hat{\mu}^{p}_{u}(U) : B \subset U, \ U \subset \Omega \text{ open} \}.
\end{split}
\]
  The above formula immediately gives also the uniqueness of the extension to a Borel regular measure on $\Omega$.
\end{remark}

We conclude this section with a remark concerning the definition of the functionals $\mathcal{F}^p_\varepsilon$ as well as the definition of the measures $\hat{\mu}^p_u$.

\begin{remark}\label{r:finnum}
We observe that in \eqref{e:genfun} and \eqref{e:mup} we can equivalently take the supremum over any family of countable pairwise disjoint open balls and the result would be the same. In addition, all the results as well as their proofs remain unchanged if we replace $\mathscr{B}$ with any class made of finitely many pairwise disjoint convex and open sets.  

We also remark that the functional \eqref{e:genfun} could be replaced by
\[\mathcal{F}'^p_\varepsilon(u,\Omega):=\sup_{ \mathscr{B}} \sum_{B \in \mathscr{B}}\bigg(\int_{\frac{B-B}{\varepsilon}}F_{\varepsilon,\xi}(u,B)^p e^{-|\xi|^{2}}\, d\xi\bigg)^\frac{1}{p }\]
without any substantial change in the proofs that follow. 
In particular, with this choice we would have exactly $\mathcal{F}'^1_\varepsilon(u,B)=\mathcal{F}_\varepsilon(u,B)$ for every open ball $B \subset \R^n$. 
\end{remark}

\section{Structure of the space $\textnormal{GBV}^{\mathcal{E}}$}
\label{s:structure}

The present section addresses the study of the space ${\rm GBV}^{\mathcal{E}}(\Om;\R^n)$ introduced in Definition~\ref{d:gbvA}. In particular, we focus on the structure of functions $u \in {\rm GSBV}^{\mathcal{E}}(\Om,\R^n)$.
As a byproduct, we prove the following identification Theorem.

\begin{theorem}
\label{t:main-emanuele}
  Let $\Om\subset \R^{n}$ open. A measurable function $u \colon \Omega \to \mathbb{R}^n$ belongs to $ {\rm GSBD}(\Omega)$ if and only if $u \in {\rm GSBV}^{\mathcal{E}}(\Omega;\mathbb{R}^n)$.
\end{theorem}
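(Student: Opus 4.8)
The plan is to prove the two implications separately; the forward one, ``$u \in \mathrm{GSBD}(\Omega)$ implies $u \in \mathrm{GSBV}^{\mathcal{E}}(\Omega;\R^n)$ with $\hat\mu^p_u(\Omega) < \infty$'', is the soft one. By the slicing characterization of $\mathrm{GBD}$ recalled in the introduction (see \cite{DM2013}), for $u \in \mathrm{GSBD}(\Omega)$ there is $\lambda_u \in \mathcal{M}^+_b(\Omega)$ with $\mu^\xi_u(B) \le \lambda_u(B)$ for \emph{every} $\xi \in \mathbb{S}^{n-1}$ and every Borel $B \subset \Omega$, and the slices $\hat u^\xi_y$ belong to $\mathrm{SBV}_{loc}$ for $\mathcal{H}^{n-1}$-a.e.\ $\xi$ and $\mathcal{H}^{n-1}$-a.e.\ $y \in \Pi^\xi$. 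Integrating $\mu^\xi_u \le \lambda_u$ over $\mathbb{S}^{n-1}$ yields \eqref{e:gbve} with $\lambda = \mathcal{H}^{n-1}(\mathbb{S}^{n-1})\lambda_u$, so $u \in \mathrm{GSBV}^{\mathcal{E}}$; and feeding the same pointwise bound into \eqref{e:mup}, together with finite additivity of $\lambda_u$ on a disjoint family of balls, gives $\hat\mu^p_u(\Omega) \le \mathcal{H}^{n-1}(\mathbb{S}^{n-1})^{1/p}\lambda_u(\Omega) < \infty$ for every $p \in [1,\infty)$.

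For the converse, fix $p \in (1,\infty)$ and let $u \in \mathrm{GSBV}^{\mathcal{E}}(\Omega;\R^n)$ with $\hat\mu^p_u(\Omega)<\infty$. I would reduce to the slicing characterization of $\mathrm{GSBD}$ from \cite{DM2013}: since $\mathrm{GSBV}^{\mathcal{E}}$ already forces the slices to be $\mathrm{SBV}_{loc}$ (no Cantor part), it suffices to produce a single $\lambda \in \mathcal{M}^+_b(\Omega)$ with $\mu^\xi_u \le \lambda$ for $\mathcal{H}^{n-1}$-a.e.\ $\xi$ and then upgrade this to \emph{every} $\xi$. The obstruction is that \eqref{e:gbve} only controls the average of $\mu^\xi_u$ over $\mathbb{S}^{n-1}$, and, because of the nonlinearity of $\xi \mapsto \mu^\xi_u$, this average cannot be dominated by finitely many $\mu^{\xi_i}_u$; the bounding measure can only be built \emph{after} the structural properties of $u$ are known, and it is precisely here that the hypothesis $p>1$ enters.

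Concretely, the crucial intermediate step — which is the content of the structural results of this section, Proposition~\ref{p:keyprop1} and Theorems~\ref{t:slicejs} and \ref{t:symmdiff} — is to show that there exist an $(n-1)$-rectifiable set $J \subset \Omega$ with $\mathcal{H}^{n-1}(J)<\infty$ and a field $e(u) \in L^1(\Omega;\M^{n\times n}_{sym})$ such that, for $\mathcal{H}^{n-1}$-a.e.\ $\xi$ and $\mathcal{H}^{n-1}$-a.e.\ $y \in \Pi^\xi$, one has $(\hat u^\xi_y)'(t) = e(u)(y+t\xi)\xi\cdot\xi$ for a.e.\ $t$ and $J_{\hat u^\xi_y} \subset J^\xi_y$ up to an $\mathcal{H}^0$-null set. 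The heart of the matter — and the step I expect to be the main obstacle — is to deduce the $(n-1)$-rectifiability (equivalently, an upper dimension bound) of the set on which the slice jumps concentrate from \eqref{e:gbve} reinforced by $\hat\mu^p_u(\Omega)<\infty$, \emph{without} the pointwise bound \eqref{e:defgbd} available for genuine $\mathrm{GBD}$ functions and \emph{without} a Korn--Poincar\'e-type inequality. I would follow the integral geometric strategy of \cite{AlmiTasso2}, using in an essential way the flatness of $\R^n$: straight lines, hence the directional measures $\mu^\xi_u$, are invariant under translations and dilations, which enables a blow-up/covering argument that bypasses Korn--Poincar\'e. The role of $p>1$ is to make the $\hat\mu^p_u$-bound strong enough — it interpolates between \eqref{e:defgbd} and \eqref{e:defgbde} — for the dimensional estimate to close; for $p=1$ this step breaks down, which is the source of the Open Problem stated above.

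Granted this structure, the conclusion is routine. For $\mathcal{H}^{n-1}$-a.e.\ $\xi$ and every Borel $B$, splitting $\mu^\xi_u(B)$ into its diffuse and jump contributions, bounding the former by $\int_B |e(u)(x)\xi\cdot\xi|\,\di x \le \int_B |e(u)(x)|\,\di x$ (Fubini) and the latter by $\int_{\Pi^\xi}\mathcal{H}^0((J\cap B)^\xi_y)\,\di\mathcal{H}^{n-1}(y) = \int_{J\cap B}|\nu_J\cdot\xi|\,\di\mathcal{H}^{n-1} \le \mathcal{H}^{n-1}(J\cap B)$ (slicing of the rectifiable set $J$, discarding the factor $|\nu_J\cdot\xi|\le 1$), one obtains
\[
\mu^\xi_u(B) \le \int_B |e(u)(x)|\,\di x + \mathcal{H}^{n-1}(J\cap B) =: \lambda(B),
\]
with $\lambda := |e(u)|\,\mathcal{L}^n + \mathcal{H}^{n-1}\restr J \in \mathcal{M}^+_b(\Omega)$ since $e(u)\in L^1$ and $\mathcal{H}^{n-1}(J)<\infty$. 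The same two ingredients ($e(u)\in L^1$, $J$ rectifiable with finite $\mathcal{H}^{n-1}$-measure) let one check that, for \emph{every} $\xi$, the slices $\hat u^\xi_y$ are in $\mathrm{SBV}_{loc}$ and $\mu^\xi_u \le \lambda$, so that the $\mathrm{GSBD}$ slicing characterization of \cite{DM2013} applies and yields $u \in \mathrm{GSBD}(\Omega)$.
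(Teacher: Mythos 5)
Your overall skeleton is the same as the paper's: the forward implication is handled exactly as you say, and for the converse the paper also invokes Theorems~\ref{t:slicejs} and~\ref{t:symmdiff} (whose proofs, via Propositions~\ref{p:keyprop} and~\ref{p:keyprop1}, are where $p>1$ enters), builds a single dominating measure $\lambda$, and then upgrades the bound from $\mathcal{H}^{n-1}$-a.e.\ $\xi$ to every $\xi$ using the lower semicontinuity of $\xi\mapsto\mu^\xi_u(U)$ (Corollary~\ref{c:measurability-mu-xi}), \cite[Theorem~3.5]{DM2013}, and a disintegration argument. However, there is a genuine gap in your construction of $\lambda$: you claim that the structural results give a concentration set $J$ with $\mathcal{H}^{n-1}(J)<\infty$, and you then bound the jump contribution of $\mu^\xi_u(B)$ by $\int_{\Pi^\xi}\mathcal{H}^0((J\cap B)^\xi_y)\,\di\mathcal{H}^{n-1}(y)\le\mathcal{H}^{n-1}(J\cap B)$, discarding both $|\nu_J\cdot\xi|$ and, crucially, the amplitude truncation. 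Neither the hypotheses nor Theorems~\ref{t:slicejs}--\ref{t:symmdiff} give $\mathcal{H}^{n-1}(J_u)<\infty$: the measures $\mu^\xi_u$ weight small jumps of the slices by $|[\hat u^\xi_y]|\wedge 1$, so $\hat{\mu}^p_u(\Omega)<\infty$ only controls $\int_{J_u}(|[u]|\wedge 1)\,\di\mathcal{H}^{n-1}$ and is perfectly compatible with $\mathcal{H}^{n-1}(J_u)=\infty$ (e.g.\ countably many jump pieces of unit area with amplitudes $2^{-k}$, which even lie in ${\rm BD}$). With $\mathcal{H}^{n-1}(J)=\infty$ your candidate $\lambda=|e(u)|\mathcal{L}^n+\mathcal{H}^{n-1}\restr J$ is not a bounded measure, and producing a \emph{finite} $\lambda$ is precisely what the ${\rm GSBD}$ definition demands, so the argument does not close as written.

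The fix is what the paper does: keep the truncation. By Theorem~\ref{t:slicejs}, the countable rectifiability of $J_u$, and the Area Formula, the jump part of $\mu^\xi_u(B)$ equals $\int_{B\cap J_u}(|[u\cdot\xi]|\wedge 1)\,|\nu_u\cdot\xi|\,\di\mathcal{H}^{n-1}\le\int_{B\cap J_u}(|[u]|\wedge 1)\,\di\mathcal{H}^{n-1}$, so one takes $\lambda:=|e(u)|\,\mathcal{L}^n+(|[u]|\wedge 1)\,\mathcal{H}^{n-1}\restr J_u$; its finiteness follows from $\hat{\mu}^1_u(\Omega)<\infty$ together with the two-sided comparison $c_n(|v|\wedge 1)\le\int_{\mathbb{S}^{n-1}}(|v\cdot\xi|\wedge 1)|\nu\cdot\xi|\,\di\mathcal{H}^{n-1}(\xi)\le|v|\wedge 1$. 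Also note that your final sentence (``the same two ingredients let one check that, for every $\xi$\dots'') compresses a nontrivial step: getting $\mu^\xi_u\le\lambda$ and the ${\rm SBV}_{loc}$ property of the slices for \emph{every} $\xi$, not just a dense set, requires the semicontinuity argument in $\xi$ combined with \cite[Theorem~3.5]{DM2013} and the specific form of $\lambda$ (absolutely continuous plus a part carried by a rectifiable set), which is what rules out Cantor parts on the exceptional directions.
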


In the next subsection, we discuss some measurability issues which justify formula~\eqref{e:gbve} in Definition~\ref{d:gbvA}.

\subsection{Preliminaries}
We start with a technical proposition for one-dimensional functions. 
We introduce the class $\mathcal{T}$ of all functions $\tau \in C^{1}(\R)$ such that $-\frac12 \leq \tau \leq \frac12$ and $0 \leq \tau'\leq 1$. We recall that, given an open set $I \subset \mathbb{R}$ and given $(\tau_j) \subset C^1(I)$, $\tau \in C^1(I)$, we have $\tau_j \to \tau$ in $C^1_{loc}(I)$ if and only if for every open set $U \Subset I$ it holds $\|\tau_j-\tau\|_{L^\infty(U)} + \|\tau'_j - \tau'\|_{L^\infty(U)} \to 0$ as $j \to \infty$.

\begin{proposition}
\label{p:equality-mu-xi0.1}
Let $I \subset \R$ be open and let $u \in {\rm BV}_{loc}(I)$. Let furthermore $\hat{\mathcal{T}} \subset \mathcal{T}$ be any countable set which is dense with respect to the $C^1_{loc}$-topology. For every $U$ open subset of~$I$, it holds
\begin{equation}
\label{e:equality-mu-xi0.1}
|Du|(U \setminus J^1_u) + \mathcal{H}^0(U \cap J^1_u) = \sup_{k \in \mathbb{N}} \sup \sum_{i=1}^{k} | D(\tau_{i} (u)) |(U_{i})\,,
\end{equation}
where the second supremum is taken over all the families $\tau_{1}, \ldots \tau_{k} \in \hat{\mathcal{T}}$ and all the families of pairwise disjoint open subsets $U_{1}, \ldots, U_{k}$ of~$U$.
\end{proposition}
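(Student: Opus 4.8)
The plan is to prove the two inequalities in \eqref{e:equality-mu-xi0.1} separately, with the ``$\geq$'' direction being essentially a computation on each truncation and the ``$\leq$'' direction requiring an approximation/covering argument that is the heart of the statement.

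\textbf{Reduction and the inequality ``$\geq$''.} First I would observe that it suffices to prove the identity with the supremum over the full class $\mathcal{T}$ in place of the countable dense subset $\hat{\mathcal{T}}$: indeed, for fixed pairwise disjoint open sets $U_1,\dots,U_k$ and fixed $\tau_1,\dots,\tau_k\in\mathcal{T}$, choosing $\tau_i^{(j)}\in\hat{\mathcal{T}}$ with $\tau_i^{(j)}\to\tau_i$ in $C^1_{loc}$ and using that $\tau_i^{(j)}(u)\to\tau_i(u)$ strictly in $BV_{loc}(U_i)$ (or at least lower semicontinuity of total variation under this convergence, which holds because $\tau_i^{(j)}(u)\to\tau_i(u)$ in $L^1_{loc}$) gives $|D(\tau_i(u))|(U_i)\le\liminf_j|D(\tau_i^{(j)}(u))|(U_i)$; since one may shrink $U_i$ slightly to compactly contained sets without decreasing the sup in the limit, the countable family yields the same value. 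For ``$\geq$'', fix a single $\tau\in\mathcal{T}$ and a single open $U'\subset U$. On $U'\setminus J^1_u$ the chain rule for $BV$ functions (valid since $\tau\in C^1$) together with $0\le\tau'\le 1$ gives $|D(\tau(u))|(U'\setminus J^1_u)\le|Du|(U'\setminus J^1_u)$, splitting $Du$ into its diffuse part (absolutely continuous plus Cantor) and its jump part on small jumps; on $U'\cap J^1_u$, each jump point $t$ of $u$ contributes $|\tau(u^+(t))-\tau(u^-(t))|\le 1$ to $|D(\tau(u))|$ (because $|\tau|\le\frac12$, so the oscillation of $\tau$ is at most $1$), i.e. $|D(\tau(u))|(U'\cap J^1_u)\le\mathcal{H}^0(U'\cap J^1_u)$. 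Adding and taking $U'=U$ bounds the left-hand side of \eqref{e:equality-mu-xi0.1} from above by a single term of the right-hand family; a fortiori by the supremum.

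\textbf{The inequality ``$\leq$''.} This is the main obstacle. I must produce, for each $\delta>0$, a finite family of pairwise disjoint open sets $U_i\subset U$ and truncations $\tau_i\in\mathcal{T}$ with $\sum_i|D(\tau_i(u))|(U_i)\ge|Du|(U\setminus J^1_u)+\mathcal{H}^0(U\cap J^1_u)-\delta$. The strategy is to separate the jump set $J^1_u$ of large jumps from the ``diffuse'' part. Since $u\in BV_{loc}(I)$, on any $V\Subset U$ the set $J^1_u\cap V$ is finite (a countable jump set with jumps $\ge 1$ and finite total variation must be finite on relatively compact sets); enumerate these points $t_1<\dots<t_m$. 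Around each $t_j$ pick a tiny interval $I_j=(t_j-r,t_j+r)$ so small that the intervals are disjoint, contained in $U$, contain no other point of $J^1_u$, and $|Du|(I_j\setminus\{t_j\})$ is as small as we like; on $I_j$ choose an affine-type rescaling $\tau_j\in\mathcal{T}$ (namely $\tau_j(s)=\mathrm{clip}(a_j s + b_j)$ with slope chosen so that $\tau_j$ is linear with slope in $[0,1]$ across the interval $[\min(u^-(t_j),u^+(t_j)),\max(\cdots)]$ only if that interval is short — if the jump is $\ge1$ we instead want $\tau_j$ to saturate so that $|\tau_j(u^+)-\tau_j(u^-)|$ is close to $1$): concretely, take $\tau_j$ affine with small slope near the median value so that it maps $u^-(t_j)$ near $-\frac12$ and $u^+(t_j)$ near $+\frac12$, giving a jump of $\tau_j(u)$ at $t_j$ close to $1$, while $|D(\tau_j(u))|(I_j\setminus\{t_j\})\le|Du|(I_j\setminus\{t_j\})$ is negligible. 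This recovers $\approx\mathcal{H}^0(U\cap J^1_u)$ (up to the part near $\partial U$, handled by exhausting $U$). For the diffuse part: on the open set $W:=V\setminus\bigcup_j\overline{I_j}$, the measure $|Du|$ restricted to $W$ has no atoms of size $\ge1$, so it can be approximated from below by $\sum_\ell|D(\tau_\ell(u))|(W_\ell)$ over a fine partition of $W$ into small intervals $W_\ell$ on each of which the oscillation of $u$ is $<1$, taking $\tau_\ell$ an affine map with slope $1$ (clipped outside the range of $u$ on $W_\ell$), for which the chain rule gives $|D(\tau_\ell(u))|(W_\ell)=|Du|(W_\ell)$ exactly. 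Summing the jump contributions and the diffuse contributions, and letting $V\uparrow U$, $r\downarrow 0$, and the partition refine, yields the reverse inequality. The delicate point to get right is the simultaneous bookkeeping: ensuring the chosen $U_i$ (the $I_j$'s and the $W_\ell$'s) are globally pairwise disjoint, that the truncations used on the $W_\ell$ near a jump point do not inadvertently see part of the large jump (handled by making the $I_j$ strictly separate the jump from the diffuse region and by choosing $W_\ell$ small enough that $u$ has oscillation $<1$ there), and that the loss near $\partial U$ and from finitely-many-intervals approximation is $\le\delta$. The countability of $\hat{\mathcal{T}}$ then re-enters through the first-paragraph approximation, replacing each affine/clipped map by a $C^1_{loc}$-close element of $\hat{\mathcal{T}}$.
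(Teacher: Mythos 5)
Your argument is correct in substance, but it takes a genuinely different route from the paper. The paper's proof is essentially a citation: since $u\restr V\in{\rm BV}(V)$ for every $V\Subset I$, condition (a) of \cite[Theorem~3.5]{DM2013} holds, and \cite[Theorem~3.8]{DM2013} already gives the identity with the supremum taken over the full class $\mathcal{T}$; the paper then only adds an exhaustion $V_k\nearrow I$ (monotone convergence of both sides) and passes from $\mathcal{T}$ to the countable dense family $\hat{\mathcal{T}}$ via the chain-rule estimate $|D(\tau_j(u)-\tau(u))|(V)\le\|\tau_j-\tau\|_{C^1((-m,m))}\,|Du|(V)$, using $\|u\|_{L^\infty(V)}\le m$. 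You instead reprove the core one-dimensional identity by hand (isolating the finitely many jumps larger than $1$, clipped affine truncations near each, slope-one clipped truncations on a fine partition of the complement), and you reduce $\hat{\mathcal{T}}$ to $\mathcal{T}$ by lower semicontinuity of the total variation under $L^1_{loc}$-convergence, which is in fact slightly cleaner than the paper's chain-rule argument because it needs no $L^\infty$ bound. What the paper's route buys is brevity and safety (the delicate construction is outsourced to Dal Maso); what yours buys is a self-contained, elementary proof of the $1$-d case.

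Two points in your sketch need repair, though neither is fatal. First, the closing sentence of your ``$\geq$'' paragraph is stated backwards: your estimates bound each term $|D(\tau(u))|(U')$ (and hence, after summing over the pairwise disjoint $U_i$, each competitor sum) from above by the left-hand side, not the left-hand side by a single term; the summation over the disjoint family should be made explicit, but the displayed estimates do give the claimed inequality. Second, in the ``$\leq$'' direction the partition of $W$ into intervals on which the essential oscillation of $u$ is strictly less than $1$ need not exist: a jump of size exactly $1$ does not belong to $J^1_u$, yet forces oscillation at least $1$ on every neighbourhood, and on such an interval no $C^1$ truncation with $0\le\tau'\le1$, $|\tau|\le\tfrac12$ captures the full variation (the supremum is approached but not attained). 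The fix is standard: for fixed $\eta>0$ isolate the finitely many jumps of size larger than $1-\eta$ (not just those larger than $1$) into their own small intervals, where the clipped map recovers their contribution up to $\eta$, and choose the remaining partition so that $|Du|(W_\ell)<1$, which is possible since all residual atoms have mass at most $1-\eta$; then let $\eta\to0$. With these adjustments your construction is complete.
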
 

\begin{proof}
   Since $u \in  {\rm BV}_{loc}(I)$, for every $U \Subset I$ open and compactly contained in $I$, the condition (a) of \cite[Theorem 3.5]{DM2013} is satisfied. Therefore, we can apply \cite[Theorem 3.8]{DM2013} to the one-dimensional function $u \restr U \in {\rm BV}(U)$ to deduce that \eqref{e:equality-mu-xi0.1} holds true for every open set $U' \subset U$ if we replace $\hat{\mathcal{T}}$ with the entire family $\mathcal{T}$. In order to pass from a compactly contained open set $U$ to the whole of $I$, it is enough to consider a sequence $U_k \Subset I$ with $U_k \nearrow I$ and notice that
   \[
   |Du|(U_k \setminus J^1_u) + \mathcal{H}^0(U_k \cap J^1_u) \nearrow |Du|(I \setminus J^1_u) + \mathcal{H}^0(I \cap J^1_u), \ \ \text{ as $k \to \infty$.}
   \]
   In order to conclude the proof of formula \eqref{e:equality-mu-xi0.1} we need to show that it is enough to consider the supremum on the smaller family $\hat{\mathcal{T}}$. To this purpose we simply notice that for every $U \Subset I$, being $u \in {\rm BV}(U)$, we have $\|u\|_{L^\infty(U)} \leq m$. Hence, by applying the chain rule for ${\rm BV}$-function (see for instance \cite{AFP}), if $(\tau_j) \subset \hat{\mathcal{T}}$ is such that $\tau_j \to \tau$ in $C^1_{loc}(\mathbb{R})$, then 
   \[
   |D(\tau_j(u) - \tau(u))|(U) \leq \|\tau_j -\tau\|_{C^1((-m,m))} \, |Du|(U) \to 0, \ \ \text{ as $j \to \infty$.}
   \]
   With this information at hand we infer that the double supremum in the right hand-side of \eqref{e:equality-mu-xi0.1} does not change when restricted to $\hat{\mathcal{T}}$.
\end{proof}

We are now in position to prove Lemma \ref{l:measfin}.

\begin{proof}[Proof of Lemma \ref{l:measfin}.]
The assumptions of the lemma are equivalent to: for $\mathcal{H}^{n-1}$-a.e. $\xi \in \mathbb{S}^{n-1}$, for $\mathcal{L}^{n}$-a.e.~$x \in \Omega$, the function $\hat{u}^{\xi}_{x}$ belongs to ${\rm BV}_{loc} (\Om^{\xi}_{x})$.

    We claim that for every open set $U \subset \Omega$ and every $\tau \in \mathcal{T}$ the map $(x,\xi) \mapsto | D\tau(\hat{u}^\xi_x) |(U^\xi_x)$ is $(\mathcal{L}^n \otimes \mathcal{H}^{n-1})$-measurable on $\Om \times \mathbb{S}^{n-1}$. To show the claim, consider a countable dense subset $D \subset C^1_c(\mathbb{R})$ in the $C^1_{loc}$-topology. Notice that, for every $f \in L^1_{loc}(\mathbb{R})$ and every $I \subset \mathbb{R}$ open, we have 
    \begin{equation}
    \label{e:supmis}
    | D f |(I) = \sup_{\substack{\varphi \in C^1_c(I) \\ \|\varphi\|_{L^\infty} \leq 1}} \int_{\mathbb{R}} f \, D\varphi \, \de t =  \sup_{\substack{\varphi \in C^1_c(I) \cap D \\ \|\varphi\|_{L^\infty} \leq 1}} \int_{\mathbb{R}} f \, D\varphi \, \de t.
    \end{equation}
    Now consider a sequence of open sets $U'_k \Subset U$ with $U'_k \nearrow U$, and consider a sequence of functions $v_k \in C^1_c(U)$ with $0 \leq v_k \leq 1$ while $v_k=1$ on $U'_k$. Notice that, for every $\varphi \in C^1_c(\mathbb{R})$, the map $(x,\xi) \mapsto \int_{\mathbb{R}} \tau(\hat{u}^\xi_x) \, D((v_k)^\xi_x \varphi) \, \de t$ is $(\mathcal{L}^n \otimes \mathcal{H}^{n-1})$-measurable (for instance by virtue of Fubini's Theorem). By formula \eqref{e:supmis}, we have for every $(x, \xi) \in \Omega \times \mathbb{S}^{n-1}$ that
    \[
    |D\tau(\hat{u}^\xi_x)|((U'_k)^\xi_x) \leq h_k(x,\xi):=  \! \! \! \! \! \!\sup_{\substack{\varphi \in C^1_c(\mathbb{R}) \cap D \\ \|\varphi\|_{L^\infty} \leq 1}} \int_{\mathbb{R}} \tau(\hat{u}^\xi_x) \, D((v_k)^\xi_x \varphi) \, \de t  \leq  |D\tau(\hat{u}^\xi_x)|(U^\xi_x).
    \]
    Notice that, since the countable supremum of measurable functions is a mesurable function, $h_k(x,\xi)$ is $(\mathcal{L}^n \otimes \mathcal{H}^{n-1})$-measurable. By the monotone property of measures, we know that $|D\tau(\hat{u}^\xi_x)|((U'_k)^\xi_x) \nearrow |D\tau(\hat{u}^\xi_x)|(U^\xi_x)$ for every $(x,\xi) \in \Omega \times \mathbb{S}^{n-1}$ as $k \to \infty$, meaning that $h_k(x,\xi) \to |D\tau(\hat{u}^\xi_x)|(U^\xi_x)$ for every $(x,\xi) \in \Omega \times \mathbb{S}^{n-1}$ as $k \to \infty$. Thanks to the fact that pointwise limits of sequences of measurable functions are measurable functions, the claim follows.
    
   By combining the previous claim with Proposition \ref{p:equality-mu-xi0.1}, we deduce that the map in \eqref{e:measfin} is the countable supremum of measurable maps, therefore, it is measurable. The proof is thus concluded.
\end{proof}


We discuss here the measurability of $\xi \mapsto \hat{\mu}^{\xi}_{u} (B)$ for $B$ Borel subset of~$\Om$.

\begin{proposition}
\label{p:equality-mu-xi}
Let $\Om \subset \R^{n}$ open and let $u \colon \Om \to \R^{n}$ be $\mathcal{L}^{n}$-measurable and such that for $\mathcal{H}^{n-1}$-a.e.~$\xi \in \mathbb{S}^{n-1}$ and $\mathcal{H}^{n-1}$-a.e.~$y \in \Pi^{\xi}$ the function $\hat{u}^{\xi}_{y}$ belongs to ${\rm BV}_{loc}(\Om^{\xi}_{y})$. For  every  $\xi \in \mathbb{S}^{n-1}$ and for every $U$ open subset of~$\Om$, it holds
\begin{equation}
\label{e:equality-mu-xi}
\hat{\mu}^{\xi}_{u} (U) = \sup_{k \in \mathbb{N}} \sup \sum_{i=1}^{k} | D_{\xi} (\tau_{i} (u\cdot \xi)) |(U_{i})\,,
\end{equation}
where the second supremum is taken over all the families $\tau_{1}, \ldots \tau_{k} \in \mathcal{T}$ and all the families of pairwise disjoint open subsets $U_{1}, \ldots, U_{k}$ of~$U$.
\end{proposition}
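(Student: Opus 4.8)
Denote by $\mathcal R$ the right-hand side of~\eqref{e:equality-mu-xi}. The plan is to read off both inequalities $\mu^\xi_u(U)\ge\mathcal R$ and $\mu^\xi_u(U)\le\mathcal R$ from the one-dimensional Proposition~\ref{p:equality-mu-xi0.1} via the slicing identity for directional total variations; the genuine difficulty is that the near-optimal one-dimensional competitors depend on the slicing point $y\in\Pi^\xi$ and must be reassembled into honestly pairwise disjoint $n$-dimensional open sets. Fix $\xi\in\mathbb{S}^{n-1}$ and $U\subset\Om$ open, and write $m_y(V):=|D\hat u^\xi_y|(V\setminus J^1_{\hat u^\xi_y})+\mathcal H^0(V\cap J^1_{\hat u^\xi_y})$ for $V\subset\Om^\xi_y$ open, so that $\mu^\xi_u(U)=\int_{\Pi^\xi}m_y(U^\xi_y)\,\di\mathcal H^{n-1}(y)$ if $\hat u^\xi_y\in{\rm BV}_{loc}(U^\xi_y)$ for a.e.\ $y$ and $\mu^\xi_u(U)=\infty$ otherwise. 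I would first record the \emph{slicing identity}: for $\tau\in\mathcal T$, setting $v:=\tau(u\cdot\xi)$, one has $v^\xi_y=\tau(\hat u^\xi_y)$ for every $y$, and
\[
|D_\xi v|(W)=\int_{\Pi^\xi}|D(\tau(\hat u^\xi_y))|(W^\xi_y)\,\di\mathcal H^{n-1}(y)\qquad\text{for every open }W\subset\Om,
\]
with both sides possibly $+\infty$: ``$\le$'' is Fubini plus one-dimensional integration by parts, and ``$\ge$'' is the standard slicing theory of functions of bounded variation along the fixed direction $\xi$ (cf.~\cite{AFP}), applied on open sets $W\Subset\Om$ and then exhausting $\Om$. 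Arguing as in the proof of Lemma~\ref{l:measfin} (so using only local integrability of $\tau(\hat u^\xi_y)$), the map $y\mapsto|D(\tau(\hat u^\xi_y))|(B^\xi_y)$ is $\mathcal H^{n-1}$-measurable on $\Pi^\xi$ for every Borel $B\subset\Om$ and $\tau\in\mathcal T$, so every integral below is legitimate. With these tools the inequality $\mu^\xi_u(U)\ge\mathcal R$ is immediate: for $\tau_1,\dots,\tau_k\in\mathcal T$ and pairwise disjoint open $U_1,\dots,U_k\subset U$, the sets $(U_i)^\xi_y$ are pairwise disjoint open subsets of $U^\xi_y$, so Proposition~\ref{p:equality-mu-xi0.1} (whose supremum is unchanged if $\hat{\mathcal T}$ is replaced by $\mathcal T$) gives $\sum_i|D(\tau_i(\hat u^\xi_y))|((U_i)^\xi_y)\le m_y(U^\xi_y)$ for a.e.\ $y$ when $\mu^\xi_u(U)<\infty$ (if $\mu^\xi_u(U)=\infty$ there is nothing to prove); integrating and using the slicing identity gives $\sum_i|D_\xi(\tau_i(u\cdot\xi))|(U_i)\le\mu^\xi_u(U)$, and one passes to the supremum.

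For the reverse inequality I would fix a countable dense $\hat{\mathcal T}\subset\mathcal T$ and introduce the countable set $\mathcal C=\{c_1,c_2,\dots\}$ of finite data $c=(k,(\tau_1,\dots,\tau_k),(I_1,\dots,I_k))$ with $\tau_i\in\hat{\mathcal T}$ and $I_1,\dots,I_k$ pairwise disjoint bounded open intervals with rational endpoints, and set $g_c(y):=\sum_{i=1}^k|D(\tau_i(\hat u^\xi_y))|(I_i\cap U^\xi_y)$. Combining Proposition~\ref{p:equality-mu-xi0.1} with the inner regularity of the Radon measures $|D(\tau_i(\hat u^\xi_y))|$ (which lets one replace an arbitrary open subset of $U^\xi_y$ by a finite union of rational subintervals) and the density of $\hat{\mathcal T}$, one gets $\sup_{c\in\mathcal C}g_c(y)=m_y(U^\xi_y)$ at a.e.\ $y$ with $\hat u^\xi_y\in{\rm BV}_{loc}(U^\xi_y)$, and $\sup_{c}g_c(y)=+\infty$ whenever $\hat u^\xi_y\notin{\rm BV}_{loc}(U^\xi_y)$ (if $\hat u^\xi_y$ is essentially bounded, a single $\tau\in\hat{\mathcal T}$ that is bi-Lipschitz on its essential range already has infinite variation; otherwise finitely many suitably shifted ones do, on disjoint rational intervals); hence in all cases $\int_{\Pi^\xi}\sup_{c}g_c\,\di\mathcal H^{n-1}=\mu^\xi_u(U)$. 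If $g_c\notin L^1(\Pi^\xi)$ for some $c$, then, the sum being finite, $\int_{\Pi^\xi}|D(\tau_i(\hat u^\xi_y))|(I_i\cap U^\xi_y)\,\di\mathcal H^{n-1}(y)=\infty$ for some $i$, which by the slicing identity equals $|D_\xi(\tau_i(u\cdot\xi))|(W)$ for the open set $W:=\{x\in U:\ x\cdot\xi\in I_i\}\subset U$; then $\mathcal R=\infty=\mu^\xi_u(U)$ and we are done. So we may assume $g_c\in L^1(\Pi^\xi)$ for all $c$, and put $h_m:=\max_{j\le m}g_{c_j}$, so that $h_m\nearrow\sup_{c}g_c$ and, by monotone convergence, $\int_{\Pi^\xi}h_m\,\di\mathcal H^{n-1}\to\mu^\xi_u(U)$.

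It then suffices to show that $\mathcal R\ge\int_{\Pi^\xi}h_m\,\di\mathcal H^{n-1}$ for every fixed $m$. Given $\eta>0$, partition $\Pi^\xi$ into measurable sets $G_1,\dots,G_m$ on which $h_m=g_{c_j}$ (ties broken by smallest index), choose compact $K_j\subset G_j$ with $\int_{G_j\setminus K_j}g_{c_j}\,\di\mathcal H^{n-1}<\eta/m$, and thicken to the open sets $\tilde G_j:=\{y\in\Pi^\xi:\operatorname{dist}(y,K_j)<\tfrac13\min_{i\neq j}\operatorname{dist}(K_i,K_j)\}\supset K_j$, which are pairwise disjoint because the compacts $K_i$ are. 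Writing $c_j=(k_j,(\tau^j_l)_l,(I^j_l)_l)$, the sets $U^j_l:=\{y+t\xi:\ y\in\tilde G_j,\ t\in I^j_l\}\cap U$ are open subsets of $U$, pairwise disjoint (across $j$ because the $\tilde G_j$ are, within $j$ because the $I^j_l$ are), and satisfy $(U^j_l)^\xi_y=I^j_l\cap U^\xi_y$ for $y\in\tilde G_j$ and $=\emptyset$ otherwise; hence, by the slicing identity,
\[
\sum_{j\le m}\sum_l|D_\xi(\tau^j_l(u\cdot\xi))|(U^j_l)=\sum_{j\le m}\int_{\tilde G_j}g_{c_j}\,\di\mathcal H^{n-1}\ \ge\ \sum_{j\le m}\int_{K_j}g_{c_j}\,\di\mathcal H^{n-1}\ \ge\ \int_{\Pi^\xi}h_m\,\di\mathcal H^{n-1}-\eta,
\]
and the left-hand side is one of the competitors for $\mathcal R$. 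Letting $\eta\to 0$ and then $m\to\infty$ concludes.

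The main obstacle is precisely this last assembly step. Proposition~\ref{p:equality-mu-xi0.1} is a pointwise statement in $y$, and its optimal one-dimensional competitors have no reason to arise as slices of a single $n$-dimensional open set; the remedy is to first discretize the one-dimensional competitors into a countable family — so that ``which competitor is used at $y$'' becomes a countable measurable choice, which monotone convergence turns into finitely many fixed choices — and afterwards to promote the merely measurable level sets $G_j$ to pairwise disjoint \emph{open} sets via inner regularity and a positive-distance thickening. The degenerate directions $\xi$ for which a positive-measure set of slices fails to be ${\rm BV}_{loc}$ cause no separate trouble, since there both sides equal $+\infty$, as observed above.
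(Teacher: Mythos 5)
Your overall route is genuinely different from the paper's: you re-derive both inequalities from the one-dimensional Proposition~\ref{p:equality-mu-xi0.1} together with a single-direction slicing identity, and the discretization/assembly step (rational intervals, countable dense $\hat{\mathcal T}$, monotone convergence, inner regularity plus a positive-distance thickening of the level sets $G_j$) is sound and is a nice way to turn $y$-dependent one-dimensional competitors into honest disjoint open subsets of $U$. The paper instead treats the finite case by citing \cite[Theorem~3.8]{DM2013} directly and handles the infinite case by a contradiction argument at the $n$-dimensional level: from finiteness of the right-hand side it builds, via a Carath\'eodory construction and Besicovitch covering, a finite measure $\lambda$ dominating all $|D_\xi\tau(u\cdot\xi)|$, and then invokes \cite[Theorem~3.5]{DM2013}.

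There is, however, one genuine gap in your argument, and it sits exactly where the paper leans on \cite[Theorem~3.5]{DM2013}: the claim that $\sup_{c}g_c(y)=+\infty$ whenever $\hat u^\xi_y\notin{\rm BV}_{loc}(U^\xi_y)$. Your parenthetical justification covers the locally essentially bounded case (where a single $\tau$ with $\tau'\ge c>0$ on the relevant range does the job), but the unbounded case is not proven, and the assertion that ``finitely many suitably shifted ones do, on disjoint rational intervals'' is literally false: for a slice behaving like $t\mapsto (t-c)^{-1}\mathbf 1_{\{t>c\}}$ near an interior point $c$, \emph{every} finite family $(\tau_i,I_i)$ yields a finite sum (each shifted truncation sees at most one unit value-window, contributing at most $O(1)$ on its interval), and only the supremum over growing families diverges. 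What you actually need is the statement: if the supremum over finite families of pairwise disjoint open sets and truncations $\tau\in\mathcal T$ of $\sum_i|D(\tau_i(w))|(V_i)$ is finite on an interval, then $w\in{\rm BV}_{loc}$ there with controlled generalized variation. This is true, but it is precisely the one-dimensional core of \cite[Theorem~3.5]{DM2013} (it requires a nontrivial case analysis of how infinite variation can arise: accumulation of oscillations within a bounded value range, infinitely many large jumps, or monotone-type escapes to infinity crossing successive value-windows on disjoint time intervals); it cannot be dispatched in one parenthesis. The cleanest repair is to argue as the paper does — if $\sup_c g_c(y)\le S<\infty$ on a set of bad $y$ of positive measure, run the Carath\'eodory/Besicovitch construction (in one dimension, or directly in $\R^n$ as in the paper) to produce a finite dominating measure and then apply \cite[Theorem~3.5]{DM2013} to contradict the failure of ${\rm BV}_{loc}$ — or else prove the one-dimensional fact in full. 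With that point supplied, the rest of your proof stands.
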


\begin{proof}
Whenever $\xi \in \mathbb{S}^{n-1}$ and $U \subset \Om$ are such that $\hat{\mu}^{\xi}_{u}(U) <\infty$, equality~\eqref{e:equality-mu-xi} holds in view of~\cite[Theorem~3.8]{DM2013}. It remains to consider the case $\hat{\mu}^{\xi}_{u} (U) = \infty$. For simplicity of notation, we denote by $\eta$ the set function given by the right-hand side of~\eqref{e:equality-mu-xi}. By contradiction, assume that $\eta(U)<\infty$. In particular, for every $\tau \in \mathcal{T}$ we have
\begin{equation}\label{qua}
|D_{\xi} \tau(u\cdot\xi)|(U)\leq  \eta(U) <  \infty \,.
\end{equation}
Hence, $D_{\xi} \tau(u\cdot \xi) \in \mathcal{M}_{b} (U)$. By a Carath\'eodory construction using $\eta$ as Gauge function and the family of all open balls contained in $U$ as set of generators, we obtain a Borel regular measure $\lambda$ on $U$, with $\lambda_{\delta}$ as approximating measure for $\delta>0$ (see, e.g., \cite[Section~2.10.1]{Federer}). For every $V \Subset U$, for every $\delta>0$ small enough, and for every covering $\mathcal{G}$ of $V$ made of open balls in $\R^{n}$ with diameter smaller than $\delta$, we may assume that $\bigcup_{A \in \mathcal{G}} A \subset U$. By Besicovitch covering theorem, there exists a dimensional constant $c(n)>0$ and $\mathcal{G}_{1}, \ldots, \mathcal{G}_{c(n)}$ disjoint countable subfamilies of~$\mathcal{G}$ such that
\begin{displaymath}
V \subset \bigcup_{i=1}^{c(n)} \bigcup_{A \in \mathcal{G}_{i}} A\,.
\end{displaymath}
In particular, we have that $\lambda_{\delta} (V) \leq c(n) \eta(U)$, which yields the inequality $\lambda(V) \leq c(n) \eta(U)$ for every $V \Subset U$. Taking the limit $V \nearrow U$ we conclude that $\lambda$ is a positive bounded Radon measure on~$U$.  By using \eqref{qua} it is not difficult to show that actually for every Borel set $B \subset U$ we have that $|D_{\xi} \tau(u\cdot \xi)| (B) \leq \lambda(B)$ whenever $\tau \in \mathcal{T}$. Thus, we are in a position to apply~\cite[Theorem~3.5]{DM2013} to deduce that $\hat{\mu}^{\xi}_{u} (U) \leq \lambda(U) <\infty$, which is a contradiction. Thus, it must be $\eta(U) = \infty$ and equality~\eqref{e:equality-mu-xi} is satisfied.
\end{proof}

\begin{corollary}
\label{c:measurability-mu-xi}
Under the assumptions of Proposition~\ref{p:equality-mu-xi}, for every  open set $U \subset \Om$  the map $\xi \mapsto \hat{\mu}^{\xi}_{u} (U)$ is   lower-semicontinuous  on $\mathbb{S}^{n-1}$. 
\end{corollary}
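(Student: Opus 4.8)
The goal is to show that $\xi \mapsto \mu^\xi_u(U)$ is lower-semicontinuous on $\mathbb{S}^{n-1}$ for every fixed open set $U \subset \Omega$. By Proposition~\ref{p:equality-mu-xi}, for every such $\xi$ we have the representation
\begin{equation*}
\mu^\xi_u(U) = \sup_{k \in \mathbb{N}} \sup \sum_{i=1}^k |D_\xi(\tau_i(u \cdot \xi))|(U_i),
\end{equation*}
the inner supremum running over all choices of $\tau_1,\dots,\tau_k \in \mathcal{T}$ and all finite families of pairwise disjoint open sets $U_1,\dots,U_k \Subset U$. Since a pointwise supremum of lower-semicontinuous functions is lower-semicontinuous, it suffices to prove that for each fixed $k$, each fixed tuple $\tau_1,\dots,\tau_k \in \mathcal{T}$, and each fixed finite family of pairwise disjoint open sets $U_1,\dots,U_k \Subset U$, the map
\begin{equation*}
\xi \mapsto \sum_{i=1}^k |D_\xi(\tau_i(u \cdot \xi))|(U_i)
\end{equation*}
is lower-semicontinuous on $\mathbb{S}^{n-1}$. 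Because a finite sum of lower-semicontinuous functions is lower-semicontinuous, the whole problem reduces to the single-term statement: for fixed $\tau \in \mathcal{T}$ and fixed open $V \Subset U$, the map $\xi \mapsto |D_\xi(\tau(u \cdot \xi))|(V)$ is lower-semicontinuous.

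\textbf{The single-term estimate.} Fix $\tau \in \mathcal{T}$ and $V \Subset U$. The function $w_\xi := \tau(u \cdot \xi)$ is bounded (by $\tfrac12$) and $\mathcal{L}^n$-measurable, hence $w_\xi \in L^1_{loc}(\Omega)$ uniformly in $\xi$ with a uniform bound. Using the dual characterization of the directional total variation, for every $\xi$ we have
\begin{equation*}
|D_\xi w_\xi|(V) = \sup\left\{ \int_V w_\xi(x)\, \partial_\xi \varphi(x)\, \de x \;:\; \varphi \in C^1_c(V),\ \|\varphi\|_{L^\infty} \le 1 \right\},
\end{equation*}
where $\partial_\xi \varphi = \nabla \varphi \cdot \xi$. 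Now fix a test function $\varphi \in C^1_c(V)$ with $\|\varphi\|_{L^\infty} \le 1$ and consider the single functional
\begin{equation*}
\Phi_\varphi(\xi) := \int_V \tau(u(x) \cdot \xi)\, \nabla\varphi(x) \cdot \xi\, \de x.
\end{equation*}
I claim $\Phi_\varphi$ is continuous in $\xi$: if $\xi_j \to \xi$ in $\mathbb{S}^{n-1}$, then for a.e.\ fixed $x$ we have $\tau(u(x)\cdot\xi_j) \to \tau(u(x)\cdot\xi)$ (continuity of $\tau$ and of the inner product) and $\nabla\varphi(x)\cdot\xi_j \to \nabla\varphi(x)\cdot\xi$, while the integrand is dominated by $\tfrac12 \|\nabla\varphi\|_{L^\infty} \mathbbm{1}_V \in L^1$; dominated convergence gives $\Phi_\varphi(\xi_j) \to \Phi_\varphi(\xi)$. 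Since $|D_\xi w_\xi|(V) = \sup_{\varphi} \Phi_\varphi(\xi)$ — a supremum of the continuous functions $\Phi_\varphi$ over the family of admissible $\varphi$ — it is lower-semicontinuous. One minor point to address: the family of admissible $\varphi$ is the same for every $\xi$ (it depends only on $V$, not on $\xi$), so there is no issue with the index set of the supremum varying; the supremum of any family of continuous (indeed, of lower-semicontinuous) functions over a common index set is lower-semicontinuous, which is exactly what we need.

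\textbf{Assembling and the main obstacle.} Combining the three reductions — single term is l.s.c.\ (continuity of $\Phi_\varphi$ plus supremum), finite sums of l.s.c.\ functions are l.s.c., and arbitrary suprema of l.s.c.\ functions are l.s.c.\ — yields that $\xi \mapsto \mu^\xi_u(U)$ is lower-semicontinuous, completing the proof. The genuinely substantive input is Proposition~\ref{p:equality-mu-xi}, which replaces the measure $\mu^\xi_u$, defined via one-dimensional slices and containing the nonlinear truncation-at-$1$ of the jump part, by a supremum of ordinary distributional directional total variations of the smooth truncations $\tau(u\cdot\xi)$; without that representation the direct dependence of $\mu^\xi_u(U)$ on $\xi$ through the slice structure would be awkward to control. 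Once that representation is granted, the only care needed is the elementary but real point that the dual formula for total variation must use test functions $\varphi$ whose admissible class does not depend on $\xi$ — which is automatic here since $\partial_\xi\varphi = \nabla\varphi\cdot\xi$ keeps $\varphi \in C^1_c(V)$ — so that the supremum is genuinely over a fixed family of continuous functions of $\xi$. I do not anticipate any serious difficulty beyond correctly invoking Proposition~\ref{p:equality-mu-xi} and the standard stability properties of lower semicontinuity.
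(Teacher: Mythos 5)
Your proof is correct and follows essentially the same route as the paper: the paper's argument is precisely that the right-hand side of the representation formula in Proposition~\ref{p:equality-mu-xi} is a supremum (over $\tau_i$ and disjoint open sets) of the quantities $|D_\xi(\tau_i(u\cdot\xi))|(U_i)$, each of which is lower-semicontinuous in $\xi$ by the dual formulation with a $\xi$-independent class of test functions, exactly as you spell out. Your restriction to $U_i \Subset U$ is an inessential deviation from the proposition's statement (the supremum is unchanged, since the dual formula shows $|D_\xi w|(U_i)=\sup_{V\Subset U_i}|D_\xi w|(V)$), so there is no gap.
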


\begin{proof}
We notice that equality~\eqref{e:equality-mu-xi} holds for every $U \subset \Om$ open, for  every  $\xi \in \mathbb{S}^{n-1}$. In particular, the right-hand side of~\eqref{e:equality-mu-xi} is lower-semicontinuous w.r.t.~$\xi \in \mathbb{S}^{n-1}$ for fixed $U\subset \Om$ open, hence $\xi \mapsto \hat{\mu}^{\xi}_{u} (U)$ is also lower-semicontinuous.
\end{proof}

Thanks to Corollary~\ref{c:measurability-mu-xi}, the integral~\eqref{e:gbve} in Definition~\ref{d:gbvA} is now justified.   We further notice that $\hat{\mu}^{1}_{u}$ can be extended to a finite Radon measure on~$\Om$, that we still denote by $\hat{\mu}^{1}_{u}$.

\begin{remark}
We point out that whenever $u \in {\rm GBV}^{\mathcal{E}} (\Om;\R^n)$, the integral formula~\eqref{e:hat-mu-repr} can be extended also for $K \subset \Om$ compact. Indeed, it is enough to take a sequence $U_{k}$ of open subsets of~$\Om$ such that $K = \bigcap_{k \in \mathbb{N}} U_{k}$. Then, for every $k \in \mathbb{N}$ we have that
  \begin{displaymath}
   \hat{\mu}^1_u \bigg(\bigcap_{j=1}^{k} U_{j} \bigg) = \int_{\mathbb{S}^{n-1}} \hat{\mu}^\xi_{u} \bigg(\bigcap_{j=1}^{k} U_{j} \bigg) \, \de\mathcal{H}^{n-1}(\xi)\,.
    \end{displaymath}
    Passing to the limit as $k\to \infty$, by dominated convergence (recall that $\hat{\mu}^{\xi}_{u} (\Om) \in L^{1}(\mathbb{S}^{n-1})$) we deduce that
      \begin{displaymath}
   \hat{\mu}^1_u (K ) := \int_{\mathbb{S}^{n-1}} \hat{\mu}^\xi_{u} ( K ) \, \de\mathcal{H}^{n-1}(\xi)\,.
    \end{displaymath}
\end{remark}

\subsection{Proof of Theorem~\ref{t:main-emanuele}}
\label{s:griffith}
In this section we prove that if $ u \in \textnormal{GSBV}^{\mathcal{E}}(\Omega;\mathbb{R}^n)$ then $u \in  \textnormal{GSBD}(\Omega)$.
Such implication is a consequence of the following two theorems, concerning the structure properties of functions in ${\rm GSBV}^{\mathcal{E}} (\Om;\mathbb{R}^n)$.

\begin{theorem}
\label{t:slicejs}
    Let $u \in {\rm GBV}^{\mathcal{E}}(\Omega;\mathbb{R}^n)$ and assume that $\hat{\mu}_u^1$ is a finite measure. Then for $\mathcal{H}^{n-1}$-a.e. $\xi \in \mathbb{S}^{n-1}$ we have
    \begin{align}
        \label{e:slicejs}
        (J_u)^\xi_y = J_{\hat{u}^\xi_y}, \quad \text{for $\mathcal{H}^{n-1}$-a.e. } y \in \Pi^\xi
    \end{align}
\end{theorem}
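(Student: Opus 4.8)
The plan is to prove the two inclusions $(J_u)^\xi_y \supseteq J_{\hat u^\xi_y}$ and $(J_u)^\xi_y \subseteq J_{\hat u^\xi_y}$ separately, for $\mathcal H^{n-1}$-a.e.~$\xi$ and then $\mathcal H^{n-1}$-a.e.~$y \in \Pi^\xi$. One inclusion, $(J_u)^\xi_y \subseteq J_{\hat u^\xi_y}$ up to $\mathcal H^{n-1}$-negligible sets of directions and slices, should be the ``easy'' direction: it is essentially a measure-theoretic/slicing statement saying that the jump of a slice is at least as large as the slice of the jump set. Since at this stage we do not yet know $u \in {\rm GSBD}$, one cannot simply quote the $GBD$ slicing theory of \cite{DM2013}; instead I would argue directly from the definition of the (approximate) jump set $J_u$ together with a Fubini-type argument, exploiting that for $\mathcal L^n$-a.e.~$x$ the slice $\hat u^\xi_x$ captures the approximate one-sided limits of $x\mapsto u(x)\cdot\xi$ along the line $x+\R\xi$. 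Carefully: if $x_0 \in J_u$ with normal $\nu$ and one-sided traces $u^\pm$, then for a.e.~$\xi$ with $\xi\cdot\nu\neq 0$ the point $t_0$ with $x_0 = y+t_0\xi$ lies in $J_{\hat u^\xi_y}$ with jump $(u^+-u^-)\cdot\xi$, and the exceptional set of $(\xi,y)$ is controlled by the standard coarea/slicing identities for the rectifiable set $J_u$.

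The substantive content is the reverse inclusion $(J_u)^\xi_y \supseteq J_{\hat u^\xi_y}$, i.e.~that every jump of every one-dimensional slice $\hat u^\xi_y$ ``comes from'' a genuine jump of $u$ in $\R^n$. This is precisely where the hypothesis $p>1$ and the finiteness of $\hat\mu^p_u$ must be used. The idea is: the jump points of the slices $\hat u^\xi_y$ together form a set $S$ in $\Omega\times\Ss^{n-1}$ (measurable by Lemma \ref{l:measfin}/\ref{l:measdalmaso}); its $\Pi^\xi$-fibers for fixed $\xi$ feed into $\mu^\xi_u$ through the $\mathcal H^0(\cdot\cap J^1)$ term (after splitting jumps into those of size $>1$ and $\le 1$ via truncations $\tau\in\mathcal T$). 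If a positive-measure set of slice-jumps did \emph{not} sit over a rectifiable ($(n-1)$-dimensional) set in $\Omega$, one would produce, via the $L^p$-superadditive structure in \eqref{e:mup}, infinitely many disjoint balls each carrying a fixed amount of $\hat\mu^p_u$-mass — contradicting $\hat\mu^p_u(\Omega)<\infty$. Concretely I would: (i) reduce via truncation to bounded $u$, so slices are in $BV_{\rm loc}$ with the jump part a countable sum of Diracs; (ii) consider the measure $\sigma$ on $\Omega$ obtained by pushing forward the slice-jump contributions, $\sigma(B) = \int_{\Ss^{n-1}}\int_{\Pi^\xi}\sum_{t\in B^\xi_y}\big(|[\hat u^\xi_y](t)|\wedge 1\big)\,d\mathcal H^{n-1}(y)\,d\mathcal H^{n-1}(\xi)$, which is dominated by $\hat\mu^1_u$ hence finite; (iii) decompose $\sigma = \sigma_{\rm rect} + \sigma_{\rm pu}$ into its $(n-1)$-rectifiable and purely unrectifiable parts, and show $\sigma_{\rm pu}=0$ using $p>1$: here is where the integral-geometric rectifiability machinery of \cite{AlmiTasso2}, adapted using the flatness/translation-rescaling invariance of $\R^n$ noted in Proposition \ref{p:keyprop1}, must be invoked to rule out a nontrivial purely unrectifiable part when $\hat\mu^p_u$ is finite with $p>1$; (iv) conclude that $\sigma$ is carried by an $(n-1)$-rectifiable set $R$, and then, slice $R$: for a.e.~$\xi$, a.e.~$y$, $R^\xi_y$ is $\mathcal H^0$-finite and contains all the jump points of $\hat u^\xi_y$, and an approximate-continuity argument along $\xi$-lines away from $R$ upgrades this to $R$ agreeing $\mathcal H^{n-1}$-a.e.~with $J_u$, giving $J_{\hat u^\xi_y}\subseteq R^\xi_y = (J_u)^\xi_y$ for a.e.~$y$.

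The main obstacle I anticipate is precisely step (iii): proving that the purely $(n-1)$-unrectifiable part of the integral-geometric measure $\sigma$ vanishes when $p>1$. This is the crux of the whole paper — indeed, the authors explicitly flag that for $p=1$ the analogous statement is an \emph{open problem} (related to Mattila's counterexamples in \cite{Mattila}), so the proof must genuinely exploit the strict superadditivity gained from the $L^p$-norm with $p>1$ in \eqref{e:mup}: a purely unrectifiable concentration of slice-jumps, when inserted into the $\sup_{\mathscr B}\sum_B(\cdot)^{1/p}$ functional, forces a divergent sum because one can find arbitrarily many disjoint small balls over which the inner $\int_{\Ss^{n-1}}\mu^\xi_u(B)^p\,d\mathcal H^{n-1}(\xi)$ stays bounded below after the $1/p$ root, contradicting finiteness. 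Making this quantitative — controlling how slice-jump mass distributes over directions and localizes in balls, and connecting it to the rectifiability criterion — is the technical heart, and I expect it to lean heavily on Proposition \ref{p:keyprop1} and the integral-geometric estimates referenced from \cite{AlmiTasso2}. A secondary, more routine difficulty is handling all the measurability and exceptional-null-set bookkeeping uniformly in $\xi$ (needing Lemma \ref{l:measfin}), and the truncation reduction in step (i), which must be done so that jump sizes and the ``$\wedge 1$'' thresholds pass correctly to the limit as the truncation level diverges.
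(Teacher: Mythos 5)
Your overall architecture (work with the integral--geometric measure $\sigma=\mathscr{I}_{u,1}$, invoke the machinery of \cite{AlmiTasso2} together with Proposition \ref{p:keyprop1}, treat the inclusion $(J_u)^\xi_y\subseteq J_{\hat u^\xi_y}$ as the routine direction, and recognize that $p=1$ is precisely the open problem) is the right one, but the argument you give at the crux, step (iii), has a genuine gap. You claim that a nontrivial purely $(n-1)$-unrectifiable part of $\sigma$ would force a divergent sum in $\sup_{\mathscr B}\sum_{B}\big(\int_{\mathbb{S}^{n-1}}\mu^\xi_u(B)^p\,\di\mathcal{H}^{n-1}(\xi)\big)^{1/p}$ because ``one can find arbitrarily many disjoint small balls over which the inner integral stays bounded below after the $1/p$ root.'' This packing heuristic does not work: a finite measure (rectifiable or not) assigns arbitrarily small mass to small balls, so no family of disjoint small balls can each carry a fixed amount of mass, and pure unrectifiability is a statement about densities and projections, not about how total mass packs into balls. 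Indeed, Mattila-type examples show that integral--geometric measures of this form can be finite and purely unrectifiable, which is exactly why the $p=1$ case is open; if unrectifiability alone produced divergence of the $p$-functional by superadditivity, essentially the same argument would apply (after H\"older in $\xi$) to $\hat\mu^1_u$ and would ``solve'' the open problem. So the mechanism by which $p>1$ enters must be different, and your proposal does not supply it.

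In the paper the exponent $p>1$ is used elsewhere: first one checks $\hat\mu^p_u(\Omega)<\infty\Rightarrow\mathscr{I}_{u,p}(\Omega)<\infty$; then Proposition \ref{p:keyprop} (the analogue of \cite[Proposition 4.7]{AlmiTasso2}, and the only place where $p>1$ is exploited) shows that $\mathscr{I}_{u,1}$ gives no mass to the set of points $x$ at which $0\notin J_{\hat u^\xi_x}$ for $\mathcal{H}^{n-1}$-a.e.\ $\xi$, i.e.\ $\mathscr{I}_{u,1}$ is concentrated on points where a \emph{positive measure of directions} jump. Next, Proposition \ref{p:keyprop1} --- the blow-up argument using the flatness of $\R^n$ and the finite-dimensionality statement of Proposition \ref{p:complete}, with no role for $p$ --- shows that at $\hat\mu^1_u$-a.e.\ such point the upper density $\Theta^{*n-1}(\hat\mu^1_u,\cdot)$ is positive; by \cite{Federer} that set is $\sigma$-finite with respect to $\mathcal{H}^{n-1}$. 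Only at this stage does one invoke \cite[Theorem 4.12]{AlmiTasso2} (with hypothesis (2) replaced by the $\sigma$-finiteness condition (2')), which simultaneously yields the rectifiability of $\mathscr{I}_{u,1}$ and the identity \eqref{e:slicejs}; this also covers your step (iv), which as stated (upgrading ``carried by a rectifiable set $R$'' to ``$R$ agrees with $J_u$ and slices correctly'') is itself a nontrivial trace/approximate-limit argument rather than a routine one. Without the concentration-plus-density chain, your decomposition $\sigma=\sigma_{\rm rect}+\sigma_{\rm pu}$ has no way to kill $\sigma_{\rm pu}$, so the proof as proposed does not go through.
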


\begin{theorem}\label{t:symmdiff}
 Let $u \in \textnormal{GBV}^{\mathcal{E}}(\Omega;\R^n)$. Then there exists $e(u) \in L^1(\Omega;\mathbb{M}_{\textnormal{sym}}^{n \times n})$ such that 
 \[\aplim_{y \to x} \frac{(u(y)-u(x)-e(u)(x)(y-x))\cdot (y-x)}{|y-x|^2}=0 \]
 holds for a.e. $x \in \Omega$. Moreover, for a.e. $\xi \in \R^n \setminus \{0\}$ and for $\mathcal{H}^{n-1}$-a.e. $y \in \Pi^\xi$ we have
 \begin{equation}\label{e:ugg}
 e(u)_y^\xi \xi \cdot \xi = \nabla \hat u^\xi_y \quad \text{$\mathcal{L}^1$-a.e. on } \Omega_y^\xi.
 \end{equation}
 \end{theorem}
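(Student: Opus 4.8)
The plan is to build the candidate $e(u)$ slice by slice and then show the pieces fit together into a single $L^1$ symmetric matrix field. First I would fix a direction $\xi \in \mathbb{S}^{n-1}$ with $\mu^\xi_u \in \mathcal{M}^+_b(\Omega)$; then for $\mathcal{H}^{n-1}$-a.e.~$y \in \Pi^\xi$ the slice $\hat{u}^\xi_y \in \mathrm{BV}_{\mathrm{loc}}(\Omega^\xi_y)$, and in particular it is approximately differentiable $\mathcal{L}^1$-a.e., so $\nabla \hat{u}^\xi_y(t)$ exists for a.e.~$t$. Applying this along finitely many (say $n$, or $n(n+1)/2$) directions $\xi_1,\dots,\xi_N$ spanning the space of symmetric matrices under the map $M \mapsto (M\xi_i\cdot\xi_i)_i$, a Fubini argument gives, for a.e.~$x \in \Omega$, a well-defined symmetric matrix $e(u)(x)$ whose entries are determined by the slice derivatives $\nabla\hat{u}^{\xi_i}_{y_i}$ evaluated at the point corresponding to $x$. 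The measurability of $x \mapsto e(u)(x)$ follows from the measurability of slicewise approximate derivatives (Fubini again). To upgrade this to \emph{every} $\xi$, i.e.~to get \eqref{e:ugg} for a.e.~$\xi$ simultaneously, I would note that once a symmetric-matrix-valued $e(u)$ has been produced, the slicing theory of \cite{DM2013} (Definition~4.10 and the structure theorems already invoked in the excerpt) identifies $\nabla\hat{u}^\xi_y$ with the density of the approximate-differentiability part of the directional measure, and this agrees with $e(u)^\xi_y\xi\cdot\xi$ for a.e.~$\xi$, a.e.~$y$: this is exactly the content of the GBD slicing characterization, transplanted here via the condition that $\hat{u}^\xi_y \in \mathrm{BV}_{\mathrm{loc}}$ for a.e.~$\xi,y$.

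The integrability $e(u) \in L^1(\Omega;\mathbb{M}^{n\times n}_{\mathrm{sym}})$ is where the hypothesis $u \in \mathrm{GBV}^{\mathcal{E}}$ and the measure $\lambda$ in \eqref{e:gbve} enter. For each fixed $\xi$, the absolutely continuous part $|D\hat{u}^\xi_y|^a = |\nabla\hat{u}^\xi_y|\,\mathcal{L}^1$ is dominated by $|D\hat{u}^\xi_y|(\cdot\setminus J^1_{\hat{u}^\xi_y}) + \mathcal{H}^0(\cdot\cap J^1_{\hat{u}^\xi_y})$ away from large jumps — more precisely, integrating $|\nabla\hat{u}^\xi_y|$ over $\Omega^\xi_y$ and then over $\Pi^\xi$ is bounded by $\mu^\xi_u(\Omega)$, hence, after averaging in $\xi$ against $\mathcal{H}^{n-1}\restr\mathbb{S}^{n-1}$, by $\hat{\mu}^1_u(\Omega) \le \lambda(\Omega) < \infty$. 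By Fubini in the coordinates $(y,t)\leftrightarrow x$ this yields $\int_\Omega |e(u)(x)\xi\cdot\xi|\,\de x < \infty$ for a.e.~$\xi$, and since $|e(u)(x)\xi\cdot\xi|$ controls the full norm $|e(u)(x)|$ after integrating $\xi$ over $\mathbb{S}^{n-1}$ (the map $M\mapsto \int_{\mathbb{S}^{n-1}}|M\xi\cdot\xi|\,\di\mathcal{H}^{n-1}(\xi)$ is a norm on symmetric matrices, comparable to the Euclidean one by homogeneity and compactness), one more application of Fubini--Tonelli gives $\int_\Omega |e(u)|\,\de x < \infty$.

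The pointwise approximate-differentiability statement
\[
\aplim_{y\to x}\frac{(u(y)-u(x)-e(u)(x)(y-x))\cdot(y-x)}{|y-x|^2}=0 \qquad\text{for a.e.~}x
\]
is the most delicate point, since slicewise approximate differentiability along a.e.~line through $x$ in a full-measure set of directions does not automatically upgrade to a genuine $n$-dimensional approximate differentiability statement of this quadratic form. Here I would follow the strategy of \cite{DM2013} (the proof that $\mathrm{GBD}$ maps have an approximate symmetric gradient, e.g.~their Theorem~9.1): combine the one-dimensional chain rule applied to truncations $\tau(\hat{u}^\xi_y)$, the coarea/slicing relation between $\mathcal{L}^n$ on $\Omega$ and $\mathcal{L}^1$ on slices, and a Fubini-type argument over $\xi$ to show that the set of "bad'' pairs $(x,\xi)$ where the blow-up quotient fails to have approximate limit $0$ is $\mathcal{L}^n\otimes\mathcal{H}^{n-1}$-null; then for a.e.~$x$ the one-dimensional approximate limits along a.e.~direction, together with the explicit quadratic candidate $e(u)(x)\xi\cdot\xi$, patch into the claimed $n$-dimensional approximate differentiability by the same averaging-over-$\xi$ argument used for integrability. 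The key obstacle is precisely this patching step — passing from a.e.-slice information to an a.e.-point statement about the quadratic form — and I expect to handle it exactly as in \cite[Section~9]{DM2013}, with the only modification being that our averaged bound \eqref{e:gbve} replaces the pointwise-in-$\xi$ bound \eqref{e:defgbd}; since the patching argument there is itself measure-theoretic and does not use the pointwise control in $\xi$, it carries over without change.
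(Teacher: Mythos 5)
There is a genuine gap at the heart of your argument: the consistency step. You define $e(u)(x)$ by solving a linear system from slice derivatives along $N=n(n+1)/2$ fixed directions, and then assert that $e(u)^\xi_y\xi\cdot\xi=\nabla\hat u^\xi_y$ for a.e.\ \emph{other} $\xi$ because ``this is exactly the content of the GBD slicing characterization.'' But that is precisely what cannot be invoked here: $u\in\mathrm{GBV}^{\mathcal{E}}(\Omega;\R^n)$ is \emph{not} known to belong to $\mathrm{GBD}(\Omega)$ --- the identification $\mathrm{GSBV}^{\mathcal{E}}=\mathrm{GSBD}$ is Theorem~\ref{t:main-emanuele}, whose proof uses Theorem~\ref{t:symmdiff} itself, so your appeal is circular within the paper's logical structure; and even for genuine GBD maps, the statement that the directional derivatives $\nabla\hat u^\xi_y$ are generated by a single symmetric matrix field is not a soft slicing fact but the content of \cite[Theorem~9.1]{DM2013}, whose proof is exactly the argument you are skipping. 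Nothing in a Fubini argument over finitely many directions shows that the matrix obtained from $\xi_1,\dots,\xi_N$ reproduces the slice derivatives along any further direction.

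The paper fills this gap as follows: it defines, for every $\xi$, the quantity $e^\xi(x)$ directly through averaged difference quotients of $\hat u^\xi$, shows $\nabla\hat u^\xi_y=(e^\xi)^\xi_y$ a.e.\ for $\xi$ in the full-measure set $\Xi$ of good directions, proves the parallelogram identity $e^{\xi+\eta}(x)+e^{\xi-\eta}(x)=2e^\xi(x)+2e^\eta(x)$ a.e.\ for quadruples of directions in $\Xi$, and then --- because $\Xi$ is only of full measure --- carefully constructs a countable dense $\Q$-vector space $X\subset\Xi$ of directions on which the identity holds outside a single null set; combined with the $2$-homogeneity $e^{\rho\xi}=\rho^2e^\xi$ and \cite[Proposition~11.9]{DalMaso}, this yields a symmetric matrix $e(u)(x)$ with $e^\xi(x)=e(u)(x)\xi\cdot\xi$ first on $X$ and then, by enlarging $X$ by any fixed $\xi_0\in\Xi$, for a.e.\ $\xi$, which gives \eqref{e:ugg}. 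Your $L^1$ bound (norm equivalence of $M\mapsto\int_{\mathbb{S}^{n-1}}|M\xi\cdot\xi|\,\di\mathcal{H}^{n-1}$ plus Fubini against $\hat\mu^1_u(\Omega)\le\lambda(\Omega)$) does match the paper's, and your closing remarks on the approximate differentiability are in the right spirit, but they presuppose the existence of the consistent matrix field, which is the part your proposal leaves unproved. To repair it you would need to reproduce the parallelogram-identity construction (including the choice of the countable set of good directions inside $\Xi$), not cite the GBD theory.
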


Postponing the proofs of Theorems~\ref{t:slicejs} and~\ref{t:symmdiff} to Subsections~\ref{s:jump} and~\ref{s:symmdiff}, respectively, we conclude this section showing how to exploit them to prove Theorem~\ref{t:main-emanuele}.

\begin{proof}[Proof of Theorem~\ref{t:main-emanuele}]
    The only non trivial implication is $u  \in{\rm GSBV}^{\mathcal{E}}(\Omega;\mathbb{R}^n)$ implies $u \in {\rm GSBD}(\Omega)$. From the very definition of $\text{GSBD}(\Omega)$ we need to show the existence of a finite Borel measure $\lambda$ on $\Omega$ such that for every $\xi \in \mathbb{S}^{n-1}$
    \begin{align}
    \label{e:sliceok}
    &\hat{u}^\xi_y \in \text{SBV}_{loc}(\Omega^\xi_y), \quad \text{for $\mathcal{H}^{n-1}$-a.e. $y \in \Pi^\xi$} \\
    \label{e:sliceok1}
        &\hat{\mu}^\xi_u(B) \leq \lambda(B), \quad \text{for every $B \subset \Omega$ Borel}.
    \end{align}

    By combining Theorem \ref{t:slicejs} together with the countably $(n-1)$-rectifiability of $J_u$ (see \cite{DelNin}), we make use of the Area Formula to infer for $\mathcal{H}^{n-1}$-a.e. $\xi \in \mathbb{S}^{n-1}$ that for every Borel set $B \subset \Omega$
    \begin{equation*}
        \int_{\Pi^\xi}|D^j\hat{u}^\xi_y|(B^\xi_y \setminus J^1_{\hat{u}^\xi_y}) + \mathcal{H}^0(B^\xi_y \cap J^1_{\hat{u}^\xi_y}) \, \de\mathcal{H}^{n-1}(y) = \int_{B \cap J_u} (|[u \cdot \xi]| \wedge 1)\,|\nu_u \cdot \xi| \, \de\mathcal{H}^{n-1},
    \end{equation*}
    where $D^j$ denotes the jump part of the distributional derivative. In particular, there exists a dimensional constant $0 < c_n <1$ such that for every $v \in \mathbb{R}^n$ and $\nu \in \mathbb{S}^{n-1}$
    \[
    c_n(|v| \wedge 1) \leq \int_{\mathbb{S}^{n-1}}(|v \cdot \xi| \wedge 1)\,|\nu \cdot \xi| \, \de\mathcal{H}^{n-1}(\xi) \leq |v| \wedge 1.
    \]
    In addition, by Theorem \ref{t:symmdiff} we have also for $\mathcal{H}^{n-1}$-a.e. $\xi \in \mathbb{S}^{n-1}$ and for every Borel set $B \subset \Omega$
    \begin{align*}
        \int_{\Pi^\xi} |\nabla \hat{u}^\xi_y|(B^\xi_y) \, \de\mathcal{H}^{n-1}(y) &= \int_{\Pi^\xi} \bigg( \int_{B^\xi_y} |e(u)^\xi_y\xi \cdot\xi|\,\de t  \bigg)\de\mathcal{H}^{n-1}(y) \\
        & = \int_B |e(u)\xi\cdot \xi| \, \di x.
    \end{align*}
    Therefore, since $\hat{\mu}^1_u$ is a finite measure (recall also $|e(u)| \in L^1(\Omega)$), the Borel measure $\lambda:= |e(u)| \mathcal{L}^n + (| [u]  | \wedge 1 ) \mathcal{H}^{n-1} \restr J_u$ is finite and for $\mathcal{H}^{n-1}$-a.e. $\xi \in \mathbb{S}^{n-1}$ we have
    \begin{align}\label{disqua}
        \hat{\mu}^\xi_u(B) \leq \lambda(B), \quad \text{for every $B \subset \Omega$ Borel}.
    \end{align}
    In order to get the full conditions \eqref{e:sliceok} and \eqref{e:sliceok1} for every $\xi \in \mathbb{S}^{n-1}$, given $\xi \in \mathbb{S}^{n-1}$, we consider $\xi_k \to \xi$ such that each $\xi_k$ satisfies \eqref{e:sliceok}, \eqref{e:sliceok1} and by virtue of Corollary \ref{c:measurability-mu-xi} we infer that
    \[
    \hat{\mu}^\xi_u(U) \leq \liminf_{k} \hat{\mu}^{\xi_k}_u(U) \leq \lambda(U), \quad \text{for every $U \subset \Omega$ open}.
    \]
    Since $\lambda$ is a finite Borel measure, the above inequality can be further extended to every Borel set $B \subset \Omega$ by outer regularity. This proves \eqref{e:sliceok1}.  In order to verify the validity of \eqref{e:sliceok} we notice that, from \eqref{disqua} and by the definition of $\text{GSBV}^{\mathcal{E}}(\Omega;\mathbb{R}^n)$, we know that for a dense set of $\xi \in \mathbb{S}^{n-1}$ conditions \eqref{e:sliceok} and \eqref{e:sliceok1} hold true.  From \cite[Theorem 3.5]{DM2013}, for every such $\xi$ we have
    \begin{equation}
    \label{e:mthm2}
        |D \tau(u \cdot \xi)|(B) \leq \lambda(B), \quad \text{for every $B \subset \Omega$ Borel},
    \end{equation}
    whenever $\tau \in \mathcal{T}$. By exploiting \eqref{e:mthm2} and the lower semi-continuity of $\xi \mapsto |D \tau(u \cdot \xi)|(U)$ for every open set $U \subset \Omega$, we argue as before and show that actually \eqref{e:mthm2} holds for every $\xi \in \mathbb{S}^{n-1}$ and every Borel set $B \subset \Omega$. By using again \cite[Theorem~3.5]{DM2013}, we infer that for every $\xi \in \mathbb{S}^{n-1}$ property \eqref{e:sliceok1} holds true, and also $\hat{u}^\xi_y \in \text{BV}_{loc}(\Omega^\xi_y)$ for $\mathcal{H}^{n-1}$-a.e. $y \in \Pi^\xi$. Eventually, condition \eqref{e:sliceok1} and the specific form of $\lambda$, together with a simple disintegration argument with respect to the projection $\pi_\xi \colon \mathbb{R}^{n} \to \Pi^\xi$, give exactly the validity of \eqref{e:sliceok}.
\end{proof}

\subsubsection{Proof of Theorem~\ref{t:slicejs}}
\label{s:jump}
In slicing the jump set we will follow the line developed in \cite{AlmiTasso2}.  As already explained in the introduction, the main difference is that, in contrast to the case of a generic Riemannian manifold, the flatness of $\mathbb{R}^n$ allows to avoid the use of a Korn-Poincar\'e type of inequality. We start by introducing a class of relevant measures. Before doing this, we recall a measurability lemma.

\begin{lemma}
Let $u \colon \Omega \to \mathbb{R}^m$ be $\mathcal{L}^n$-measurable. Then, for every Borel set $B \subset \Omega$ we have that
\begin{align}
\label{e:meas1000.1}
    &y \mapsto \sum_{ t \in B^\xi_y } (|[\hat{u}^\xi_y(t)]| \wedge 1)\qquad  \text{ is $\mathcal{H}^{n-1}$-measurable} \\
    \label{e:meas1000.2}
    &\xi \mapsto \int_{\Pi^\xi} \sum_{ t \in B^\xi_y } \big ( |[\hat{u}^\xi_y(t)]| \wedge 1 \big) \, \di \mathcal{H}^{n-1}(y) \qquad\text{ is $\mathcal{H}^{n-1}$-measurable}.
\end{align}
\end{lemma}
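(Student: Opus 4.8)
The plan is to establish the two measurability statements in order, deducing \eqref{e:meas1000.2} from \eqref{e:meas1000.1} together with a joint measurability statement in the pair $(\xi,y)$. The only hypothesis on $u$ is $\mathcal{L}^n$-measurability, so—in contrast with Proposition~\ref{p:equality-mu-xi0.1} and the proof of Lemma~\ref{l:measfin}, where the slices were $\mathrm{BV}_{loc}$—one cannot invoke the $\mathrm{BV}$ chain rule, nor identify the jump sum with $\mu^\xi_u$ minus its diffuse part. Instead, the idea is to exhibit $\sum_{t\in B^\xi_y}(|[\hat u^\xi_y](t)|\wedge 1)$ as a countable supremum and monotone limit of quantities which are manifestly measurable in the parameters; the conclusion then follows from the stability of measurability under countable operations, together with Tonelli's theorem.

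\emph{Step 1 (the map in~\eqref{e:meas1000.1}).} Fix $\xi\in\mathbb{S}^{n-1}$ and a Borel set $B\subset\Omega$; after a rotation one may take $\xi=e_n$, so the slice is $t\mapsto\hat u^\xi_y(t)=u(y+te_n)\cdot\xi=:w(y,t)$ with $y\in\mathbb{R}^{n-1}$. Wherever they exist, the one-sided approximate limits $w^\pm(y,t)$ are limits along rational scales $r\downarrow 0$ of the averages $\tfrac1r\int_t^{t+r}w(y,s)\,\di s$ and $\tfrac1r\int_{t-r}^{t}w(y,s)\,\di s$; since, by Fubini, $(y,t,r)\mapsto\tfrac1r\int_t^{t+r}w(y,s)\,\di s$ is $(\mathcal{L}^{n-1}\otimes\mathcal{L}^1)$-measurable, taking $\limsup$ and $\liminf$ over rational $r$ shows that $(y,t)\mapsto w^\pm(y,t)$ is measurable on the Borel set where these limits exist and are finite. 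Then one represents the jump sum by mesh refinement: for $\delta,\rho\in\mathbb{Q}\cap(0,\infty)$, let $S^{\delta,\rho}(y)$ be the supremum, over finite families of open intervals $(a_i,b_i)$ with rational endpoints, lengths $\le\delta$, meeting $B^\xi_y$, and whose $\rho$-enlargements $(a_i-\rho,b_i+\rho)$ are pairwise disjoint and contained in $\Omega^\xi_y$, of the sums $\sum_i\big(\,\big|\tfrac1\rho\int_{b_i}^{b_i+\rho}w(y,s)\,\di s-\tfrac1\rho\int_{a_i-\rho}^{a_i}w(y,s)\,\di s\big|\wedge 1\big)$. Each $S^{\delta,\rho}$ is a countable supremum of functions of $y$ that are measurable by Fubini, hence is measurable; and one checks that $\sup_{\rho}S^{\delta,\rho}(y)$ increases, as $\delta\downarrow 0$ along a rational sequence, to $\sum_{t\in B^\xi_y}(|[\hat u^\xi_y](t)|\wedge 1)$—using, for fixed $y$, that each interval eventually isolates a single approximate jump point, that the averaged increments then converge to the size of that jump, and that a nonnegative sum equals the supremum over its finite subcollections. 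This realizes the jump sum as a monotone limit of measurable functions of $y$, which is \eqref{e:meas1000.1}.

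\emph{Step 2 (the map in~\eqref{e:meas1000.2}).} It suffices to prove joint measurability of $(\xi,y)\mapsto\sum_{t\in B^\xi_y}(|[\hat u^\xi_y](t)|\wedge 1)$ on the bundle $\{(\xi,y):\xi\in\mathbb{S}^{n-1},\ y\in\Pi^\xi\}$ and then apply Tonelli. Cover $\mathbb{S}^{n-1}$ by finitely many open caps, and on each cap fix a smooth section $\xi\mapsto R_\xi\in SO(n)$ with $R_\xi e_n=\xi$, so that $\Pi^\xi=R_\xi(\mathbb{R}^{n-1}\times\{0\})$ and $\int_{\Pi^\xi}f\,\di\mathcal{H}^{n-1}=\int_{\mathbb{R}^{n-1}}f(R_\xi(z,0))\,\di z$. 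Repeating the construction of Step 1 for $w_\xi(z,t):=u(R_\xi(z,0)+t\xi)\cdot\xi$ and keeping track of the dependence on $\xi$, every building block—the averaged increments $\tfrac1\rho\int_{b_i}^{b_i+\rho}w_\xi(z,s)\,\di s-\tfrac1\rho\int_{a_i-\rho}^{a_i}w_\xi(z,s)\,\di s$—is $(\mathcal{H}^{n-1}\otimes\mathcal{L}^{n-1})$-measurable in $(\xi,z)$ by Fubini, using that the parametrization $(\xi,z,t)\mapsto R_\xi(z,0)+t\xi$ maps $\mathcal{L}^n$-negligible sets to negligible ones, so that $u$ precomposed with it is measurable. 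Hence $S^{\delta,\rho}$ and its monotone limit in $\delta$ are jointly measurable in $(\xi,z)$, and Tonelli yields $\mathcal{H}^{n-1}$-measurability of $\xi\mapsto\int_{\mathbb{R}^{n-1}}(\cdots)\,\di z$ on each cap, hence of the map in \eqref{e:meas1000.2} on $\mathbb{S}^{n-1}$.

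I expect the main obstacle to be the convergence claim in Step 1, that $\sup_\rho S^{\delta,\rho}\nearrow\sum_{t\in B^\xi_y}(|[\hat u^\xi_y](t)|\wedge 1)$ as $\delta\downarrow0$—including the case where the right-hand side is $+\infty$. Because $u$ is only $\mathcal{L}^n$-measurable, the slices need not lie in $\mathrm{BV}_{loc}$, their approximate jump sets may fail to be countable, and jump points may accumulate, so this convergence has to be argued directly from the definition of the approximate one-sided limits rather than from the $\mathrm{BV}$ structure theory used elsewhere in this section. Once such a representation is secured, both \eqref{e:meas1000.1} and \eqref{e:meas1000.2} follow by the stability of measurability under countable suprema and pointwise limits, together with Fubini/Tonelli.
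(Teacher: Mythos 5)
Your Step 1 is where the argument breaks down, and the problem is exactly the convergence claim you flag, but it is not merely delicate: it is false. The quantity $\sup_\rho S^{\delta,\rho}(y)$ is a truncated variation-type supremum, and it records the \emph{diffuse} oscillation of the slice as well as its jumps, so it cannot converge to the pure jump sum. Concretely, take a slice $w(s)=Ms$ on an interval $I=B^\xi_y$ with $M>0$ large and $\Omega^\xi_y$ slightly larger than $I$: there are no approximate jump points, so the target sum is $0$; but for any $\delta>0$, choosing $\rho$ small with $M\rho\le 1$ and packing roughly $|I|/(2\rho)$ admissible intervals of length much smaller than $\rho$, each averaged increment equals $M(b_i-a_i+\rho)\approx M\rho$, so $S^{\delta,\rho}(y)\gtrsim (M\rho)\cdot |I|/(2\rho)\approx M|I|/2$. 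Hence $\lim_{\delta\downarrow 0}\sup_\rho S^{\delta,\rho}(y)\ge cM|I|>0$ while $\sum_{t\in B^\xi_y}(|[\hat u^\xi_y](t)|\wedge 1)=0$. Refining the mesh does not help, because nothing in the construction separates jump contributions from absolutely continuous (or Cantor-type) oscillation; isolating the jump part is precisely what the $\mathrm{BV}$ structure does elsewhere in the paper, and it is unavailable here. A second, independent defect is the opening claim that the one-sided approximate limits are limits of one-sided averages: for a merely $\mathcal{L}^n$-measurable $u$ the slices need not be locally integrable, so the averages $\tfrac1\rho\int w$ may be undefined, and even when defined they need not converge to the approximate one-sided limits (approximate limits control densities of superlevel sets, not means, unless the function is bounded). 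This part can be repaired by working with $\arctan w$ instead of $w$, but the first problem remains and invalidates the representation on which both \eqref{e:meas1000.1} and \eqref{e:meas1000.2} rest.

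For comparison, the paper does not reprove this statement at all: its proof is a direct citation of \cite[Lemma 4.5]{AlmiTasso2}. A self-contained argument would have to proceed differently from your mesh scheme: for instance, first show that the set of approximate jump points of a measurable one-dimensional function is at most countable (a separation argument at rational scales applied to the superlevel densities of $\arctan w$), establish joint measurability in the parameters of the one-sided approximate limits of $\arctan w$ on the set where they exist, and then express the sum over the countable jump sections in a measurably parametrized way (e.g.\ via a selection or graph decomposition) before applying Tonelli as in your Step 2. Your Step 2 itself (local rotations $R_\xi$ and Fubini/Tonelli on the bundle) is fine in principle, but it has nothing to transport until the sliced jump sum is represented correctly.
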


\begin{proof}
    See \cite[Lemma 4.5]{AlmiTasso2}.
\end{proof}

Given an $\mathcal{L}^n$-measurable function $u \colon \Omega \to \mathbb{R}^n$, by virtue of \eqref{e:meas1000.1} we consider for every $\xi \in \mathbb{S}^{n-1}$ the (outer) Borel regular measure $\eta_\xi$ of $\mathbb{R}^n$ given by
\begin{align}
    \label{e:defeta1}
    \eta_\xi(B) & := \int_{\Pi^\xi} \sum_{ t \in B^\xi_y} \big( |[\hat{u}^\xi_y(t)]| \wedge 1 \big) \, \di \mathcal{H}^{n-1}(y) \qquad  B \subset \Omega \text{ Borel}\,,
    \\
    \label{e:defeta2}
    \eta_\xi(E) & := \inf \, \{\eta_\xi(B) : \, E \subset B, \ B \subset \Om \text{ Borel}\}\,.
\end{align}
We note that the definition of $\eta_\xi$ in \eqref{e:defeta1} does not depend on the representative of $u$ in its Lebesgue class.



\begin{definition}[The outer measure $\mathscr{I}_{u,1}$]
Let $u \colon \Omega \to \mathbb{R}^n$ be measurable and let $\{\eta_\xi\}_{\xi \in \mathbb{S}^{n-1}}$ be the family of measures in~\eqref{e:defeta1}--\eqref{e:defeta2}. 
By virtue of \eqref{e:meas1000.2},
via the classical Caratheodory's construction, we define the (outer) Borel regular measure $\mathscr{I}_{u,1}$ on $\Omega$ as
\begin{equation*}
    \mathscr{I}_{u,1}(E) := \sup_{\delta>0} \, \inf_{G_\delta} \sum_{B \in G_\delta} \int_{\mathcal{S}^{n-1}} \eta_\xi (B) \de\mathcal{H}^{n-1}(\xi),
 \end{equation*}
whenever $E \subset \Om$ and where $G_\delta$ is the family of all countable Borel covers of $E$ made of sets having diameter less than or equal to~$\delta$.   
\end{definition}

\begin{remark}
    For every $u \in \textnormal{GBV}^{\mathcal{E}}(\Omega;\R^n)$ and for every Borel set $B \subset \Omega$, it holds 
    \[
    \mathscr{I}_{u,1}(B)= \int_{\mathcal{S}^{n-1}} \eta_\xi (B) \de \mathcal{H}^{n-1}(\xi) = \int_{\mathcal{S}^{n-1}} \Big (\int_{\Pi^\xi} \sum_{ t \in B^\xi_y} \big( |[\hat{u}^\xi_y(t)]| \wedge 1 \big) \, \di \mathcal{H}^{n-1}(y) \Big )\de \mathcal{H}^{n-1}(\xi).
    \]
\end{remark}

Given $E\subset\R^n$, we define the codimension one slice of $E$ as 
    \[
    E^\xi_s:=\{y \in \Pi^\xi+s\xi:y  \in E \}.
    \]

\begin{proposition}
\label{newprop}
    Let $\xi \in \mathbb{S}^{n-1}$, $s \in \R$ and let $u\in \textnormal{GBV}^{\mathcal{E}}(\Omega; \R^n)$. Then for $\mathcal{H}^{n-1}$-a.e. $y \in  \Omega_s^\xi$ there exists
    \begin{equation}\label{eq:nnnnn}
    \aplim_{\substack{z \to y,\\ z \in (H^{\pm}_\xi + s\xi)\cap \Omega}} u(z)=:u^{\xi\pm}_s(y),
    \end{equation}
    where $H^{+}_\xi:=\{z \in \R^n : z \cdot \xi >0 \}$ and  $H^-_\xi$ is defined similarly.
\end{proposition}

\begin{proof}
    Since $u \in \textnormal{GBV}^{\mathcal{E}}(\Omega;\R^n)$ we can find a basis $\{\xi_1,\ldots,\xi_n\}$ of $\R^n$ such that $\hat{\mu}_u^{\xi_i}(\Omega)<\infty$ for $i=1,\ldots,n$. By applying  \cite[Theorem 5.1]{DM2013}, 
    we infer that 
    \[
    \aplim_{\substack{z \to y,\\ z \in (H^{\pm}_{\xi_i} + s\xi_i)\cap \Omega}} u(z)\cdot \xi_i=:v_{i}^\pm(y) 
    \]
    exists for every $i=1,\ldots, n$ and for $\mathcal{H}^{n-1}$-a.e. $y \in  \Omega_s^\xi$. Since $\xi_1,\ldots,\xi_n$ form a basis we can readily conclude that the limit in \eqref{eq:nnnnn} exists and, in particular, $u^{\xi\pm}_s(y)\cdot \xi_i=v_{i}^\pm(y)$.
\end{proof}

Thanks to Proposition \ref{newprop}, we are now in a position to identify a precise representative for co-dimension one slices through $(n-1)$-planes.

\begin{definition}
    Given $\xi \in \mathbb{S}^{n-1}$ and $u\colon\Omega \to \R^n$, we define for every $s \in \R$ the function $\bar u^\xi_s\colon \Pi^\xi +s\xi \to \Pi^\xi $ as
    \[
    \bar u^\xi_s (y):=\pi_\xi(u_s^{\xi+}(y)), \quad y \in \Omega^\xi_s,
    \]
    where $u_s^{\xi+}$ is defined in \eqref{eq:nnnnn}.
\end{definition}


\begin{proposition}\label{prop:gbve}
    Let $u \in \textnormal{GBV}^{\mathcal{E}}(\Omega;\R^n)$. Then for $\mathcal{H}^{n-1}$-a.e. $\xi \in \mathbb{S}^{n-1}$ and for $\mathcal{L}^1$-a.e. $s \in \R$ the function $\bar u^\xi_s \in \textnormal{GBV}^{\mathcal{E}}(\Omega_s^\xi;\Pi^\xi).$
\end{proposition}
\begin{proof}
First of all we notice that the Borel regular measure on $\mathbb{S}^{n-1}$ defined as $B \mapsto \int_{\mathbb{S}^{n-1}} \mathcal{H}^{n-2}(B \cap \Pi^\xi) \, d\mathcal{H}^{n-1}(\xi)$ is invariant under the action of the orthogonal group. Hence from \cite[Theorem 2.7.7]{Federer} we infer the existence of a dimensional constant $c(n)$ such that
\begin{equation}\label{eq:sfera}
\mathcal{H}^{n-1}(B)= c(n) \int_{\mathbb{S}^{n-1}} \mathcal{H}^{n-2}(B \cap \Pi^\xi) \, d\mathcal{H}^{n-1}(\xi) \quad \forall B \subset \mathbb{S}^{n-1} \text{ Borel set},
\end{equation}
By using \eqref{e:gbve} and \eqref{eq:sfera} it holds for $\mathcal{H}^{n-1}$-a. $\xi \in \mathbb{S}^{n-1}$
\begin{align*}
\infty &>\int_{\Pi^\xi \cap \mathbb{S}^{n-1}} \hat \mu_u^\eta(\Omega) \de \mathcal{H}^{n-2}(\eta) \\
&=\int_{\Pi^\xi \cap \mathbb{S}^{n-1}} \Big(\int_{\Pi^\eta}(\hat \mu_u)_y^\eta(\Omega_y^\eta) \de \mathcal{H}^{n-1}(y)\Big )\de \mathcal{H}^{n-2}(\eta).
\end{align*}
Since $\mathcal{H}^{n-1}\restr \Pi^\xi = \big [\mathcal{H}^{n-2} \restr (\Pi^\eta \cap (\Pi^\xi +s \xi) \big ]\otimes \mathcal{L}^1$, we have
\begin{align*}
&\int_{\Pi^\xi \cap \mathbb{S}^{n-1}} \Big(\int_{\Pi^\eta}(\hat \mu_u)_y^\eta \de \mathcal{H}^{n-1}(y)\Big )\de \mathcal{H}^{n-2}(\eta)\\
&= \int_{\Pi^\xi \cap \mathbb{S}^{n-1}} \Big(\int_{\R} \Big (\int_{\Pi^\eta \cap (\Pi^\xi+s\xi)}(\hat \mu_u)_y^\eta(\Omega_y^\eta) \de \mathcal{H}^{n-2}(y)\Big )\de \mathcal{L}^1(s) \Big )\de \mathcal{H}^{n-2}(\eta).
\end{align*}
Finally we observe that, for every $\xi,\, \eta \in \mathbb{S}^{n-1}$, $u \cdot \eta = \pi_\xi(u^+)\cdot \eta$ $\mathcal{L}^n$-a.e., thus we can substitute in the above integral $(\hat \mu_u)_y^\eta= (\hat \mu_{\bar u_s^\xi})_y^\eta$, which concludes the proof.
\end{proof}

Next we prove a key technical proposition.

\begin{proposition}
\label{p:keyprop}
Assume that Theorem \ref{t:slicejs} holds true up to dimension $n-1$.
Let $u \colon \Omega \to \mathbb{R}^n$ be measurable. Then, it holds 
\begin{equation}
    \label{e:keyprop1000}
    \mathscr{I}_{u,1} \big ( \{x \in \Omega:\text{for every } \xi \in \mathbb{S}^{n-1}, x \notin J_{\bar u_s^\xi} \text{ with } s \textnormal{ s.t. } x\in \Pi^\xi+s\xi\} \big) = 0\,.
\end{equation}
\end{proposition}
\begin{proof}
Let $E$ be the set in \eqref{e:keyprop1000}, i.e.,
\begin{equation}\label{e:e}
E:=\{x \in \Omega:\text{for every } \xi \in \mathbb{S}^{n-1}, x \notin J_{\bar u_s^\xi} \text{ with } s \textnormal{ s.t. } x\in \Pi^\xi+s\xi\}.
\end{equation}

We show that $E$ is $\hat \mu_u^1$-measurable. We proceed as in \cite[Lemma 4.4]{AlmiTasso2}.
Let $\tau:=\arctan$, we define $s^{\pm} : \Omega \times \mathbb{S}^{n-1} \times \mathbb{S}^{n-1} \to \mathbb{R}^n $ and $i^{\pm} : \Omega \times \mathbb{S}^{n-1} \times \mathbb{S}^{n-1} \to \mathbb{R}^n$ as
\[
s_j^{\pm}(x,\xi, \nu) := \limsup_{r \searrow 0} \int_{(\Pi^\xi +(x\cdot \xi) \xi) \cap (H_{\nu}^{\pm}+x) \cap B_r(x) } \tau((\bar u^{\xi}_{x \cdot \xi})_j(z)) \, \de \mathcal{H}^{n-1}(z),
\]

\[
i_j^{\pm}(x,\xi, \nu) := \liminf_{r \searrow 0} \int_{(\Pi^\xi +(x\cdot \xi) \xi) \cap (H_{\nu}^{\pm}+x) \cap B_r(x) } \tau((\bar u^{\xi}_{x \cdot \xi})_j(z)) \, \de \mathcal{H}^{n-1}(z),
\]
for $j=1,\ldots, n$. We notice that $s^{\pm}$ and $i^{\pm}$ are Borel measurable in $\Omega \times \mathbb{S}^{n-1} \times \mathbb{S}^{n-1}$. Indeed, the integrand functions are Borel measurable in the variables $ (x,\xi, \nu, z)$. Hence, Fubini’s theorem implies that the integral functions are Borel measurable in $\Omega \times \mathbb{S}^{n-1} \times \mathbb{S}^{n-1}$. Finally, both liminf and limsup can be computed by restricting $r \in \mathbb{Q}$ because of the continuity of the integrals with respect to $r$. Then, the set
\[
B = \left\{ (x,\xi, \nu) \in \Omega \times \mathbb{S}^{n-1} \times \mathbb{S}^{n-1} :
s^{+}(x,\xi, \nu) = i^{+}(x,\xi, \nu), \ s^{-}(x,\xi, \nu) = i^{-}(x,\xi, \nu), \right.
\]
\[
\left. s^{+}(x,\xi, \nu) \neq s^{-}(x,\xi, \nu), \quad s^{\pm}_{j}(x,\xi, \nu) \in \left( -\frac{\pi}{2}, \frac{\pi}{2} \right) \text{ for $j =1, \ldots, n$} \right\},
\]
is Borel measurable. Furthermore, $\Om \setminus E = \pi_{1} (B)$, where $\pi_{1} \colon \Om \times \mathbb{S}^{n-1} \times \mathbb{S}^{n-1} \to \Om $ is the projection over the first component. Then, by the measurable projection Theorem (see for instance \cite[Section 2.2.13]{Federer}) the set $\Om \setminus E $ is $\hat \mu_u^1$-measurable, and the same holds for $E$.

From the very definition of $E$, we note that it holds $ (E \cap (\Pi^\xi+s\xi)) \cap J_{\bar u_s^\xi} = \emptyset$ for every $\xi \in \mathbb{S}^{n-1}$ and $s \in \R$. Moreover, we know from Proposition \ref{prop:gbve} that for $\mathcal{H}^{n-1}$-a.e. $\xi \in \mathbb{S}^{n-1}$ and for $\mathcal{L}^1$-a.e $s \in \R$ we also have $\bar u_s^\xi \in \textnormal{GBV}^{\mathcal{E}}(\Omega_s^\xi;\Pi^\xi)$.

For simplicity let us denote $v:=\bar u_s^\xi$. Then, by the assumption that Theorem \ref{t:slicejs} holds true, we have, for $\mathcal{L}^1$-a.e. $s \in \R$,
\begin{equation}
\label{e:induction1}
J_{\hat v_y^\eta}=(J_{v})_y^\eta
\end{equation}
for $\mathcal{H}^{n-2}$-a.e. $\eta \in \mathbb{S}^{n-1}\cap \Pi^\xi$ and  $\mathcal{H}^{n-2}$-a.e. $y \in \Pi^\eta \cap (\Pi^\xi+s\xi)$.
Since the map $(z,s) \mapsto u^\xi_s(z)$ coincides with the map $u(z)$ for $\mathcal{H}^{n-1}$-a.e. $z \in \Omega_s^\xi$ and for $\mathcal{L}^1$-a.e. $s \in \mathbb{R}$, and since $\eta \in \Pi^\xi$, we have as well 
\[
 u(y+t\eta) \cdot \eta = \pi_\xi(u^\xi_s(y+t\eta)) \cdot \eta = v(y+t\eta) \cdot \eta, 
\]
for $\mathcal{L}^1$-a.e. $s \in \mathbb{R}$, for $\mathcal{H}^{n-2}$-a.e. $y \in \Pi^\eta \cap (\Pi^\xi +s\xi)$, and for $\mathcal{L}^1$-a.e. $t \in \Omega^\eta_y$.
In particular, by virtue of Fubini's Theorem (up to measurability issues....), and using also \eqref{e:induction1}, we infer that for $\mathcal{H}^{n-2}$-a.e. $\eta \in \mathbb{S}^{n-1}\cap \Pi^\xi$ it holds
\[
J_{\hat u_y^\eta} \cap E_y^\eta=J_{\hat{v}_y^\eta} \cap E_y^\eta = (J_v)^\eta_y \cap E^\eta_y = (J_v \cap E)^\eta_y= \emptyset \quad \text{for } \mathcal{H}^{n-1}\text{-a.e. } y \in \Pi^\eta ,
\]
where in the last equality we have used the relation  $(E \cap (\Pi^\xi+s\xi)) \cap J_{v} = \emptyset$ valid for every $s \in \mathbb{R}$. Summarizing we have obtained that for $\mathcal{H}^{n-1}$-a.e. $\xi \in \mathbb{S}^{n-1}$ 
\begin{equation}
\label{e:finrel}
    J_{\hat u_y^\eta} \cap E_y^\eta = \emptyset, \ \ \text{ for $\mathcal{H}^{n-2}$-a.e. $\eta \in \mathbb{S}^{n-1}\cap \Pi^\xi$ and $\mathcal{H}^{n-1}$-a.e. $y \in \Pi^\eta$}.
\end{equation}
Eventually, by \eqref{eq:sfera},
we infer from \eqref{e:finrel} that
\[
J_{\hat u_y^\xi} \cap E_y^\xi = \emptyset, \ \ \text{ for $\mathcal{H}^{n-1}$-a.e. $\xi \in \mathbb{S}^{n-1}$ and $\mathcal{H}^{n-1}$-a.e. $y \in \Pi^\xi$}.
\]
From the very definition of the measures $\eta_{\xi}$ (cf.~\eqref{e:defeta1}) we obtain that $\eta_\xi(E)=0$ for $\mathcal{H}^{n-1}$-a.e. $\xi \in \mathbb{S}^{n-1}$. As a direct consequence of the definition of $\mathscr{I}_{u,1}$ we conclude $\mathscr{I}_{u,1}(E)=0$. 
\end{proof}
\BBB


Before proving the dimensional estimate on the set where the measure $\mathscr{I}_{u,1}$ is concentrated, we need the following proposition.

\begin{proposition} 
\label{p:complete}
    Let $A \subset \mathbb{R}^n$ be measurable. Consider the real vector space $L^0(A)$ made of all Lebesgue equivalence classes of measurable functions $v \colon A \to \mathbb{R}$ endowed with the metric $\rm{d}(\cdot,\cdot)$ defined as
    \[
    {\rm d}(v_1,v_2) := \int_A |v_1-v_2| \wedge 1 \, \di x, \ \ v_1,v_2 \in L^0(A),
    \]
    which induces the convergence in measure. Then, any finite dimensional vector subspace $V \subset L^0(A)$ is a complete metric space with respect to the distance ${\rm d}(\cdot,\cdot)$.
\end{proposition}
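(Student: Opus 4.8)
The plan is to fix a basis $e_{1},\dots,e_{k}$ of $V$ and transfer the problem to $\R^{k}$ via the linear bijection $\Phi\colon \R^{k}\to V$, $\Phi(a):=\sum_{i=1}^{k}a_{i}e_{i}$. Write $N(v):={\rm d}(v,0)=\int_{A}|v|\wedge 1\,\di x$, and observe that $N$ is symmetric, subadditive (since $s\mapsto s\wedge 1$ is subadditive on $[0,\infty)$), vanishes only at $0$, and that ${\rm d}$ is translation invariant, ${\rm d}(v_{1},v_{2})=N(v_{1}-v_{2})$. Given a ${\rm d}$-Cauchy sequence $v^{(j)}\in V$, write $v^{(j)}=\Phi(a^{(j)})$; it then suffices to prove (i) that $(a^{(j)})$ is Euclidean-Cauchy in $\R^{k}$, hence $a^{(j)}\to a$ for some $a\in\R^{k}$, and (ii) that $\Phi(a^{(j)})\to\Phi(a)$ in ${\rm d}$. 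Indeed, these two facts give $v^{(j)}\to\Phi(a)\in V$, which is the asserted completeness of $(V,{\rm d})$.

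For (i) the crucial point is a coercivity estimate: for every $R>0$ one has $m_{R}:=\inf\{N(\Phi(a)):|a|\ge R\}>0$. Suppose not, and pick $a^{(m)}$ with $|a^{(m)}|\ge R$ and $N(\Phi(a^{(m)}))\to 0$. The map $a\mapsto N(\Phi(a))$ is lower semicontinuous on $\R^{k}$ by Fatou's lemma (if $a^{(m)}\to a$ then $|\Phi(a^{(m)})|\wedge 1\to|\Phi(a)|\wedge 1$ a.e.\ on $A$) and is strictly positive off the origin, because $\Phi$ is injective and $N$ vanishes only at $0$; hence $(a^{(m)})$ cannot have a bounded subsequence. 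So $|a^{(m)}|\to\infty$, and passing to a subsequence with $\hat a^{(m)}:=a^{(m)}/|a^{(m)}|\to\hat a\in\Ss^{k-1}$, the set $E:=\{|\Phi(\hat a)|>0\}$ has positive measure (as $\Phi(\hat a)\neq 0$) and for a.e.\ $x\in E$ we get $|\Phi(a^{(m)})(x)|=|a^{(m)}|\,|\Phi(\hat a^{(m)})(x)|\to\infty$, so $|\Phi(a^{(m)})|\wedge 1\to 1$ a.e.\ on $E$; Fatou then forces $0=\liminf_{m}N(\Phi(a^{(m)}))\ge\mathcal{L}^{n}(E)>0$, a contradiction. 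With $m_{R}>0$ available, if $(a^{(j)})$ were not Euclidean-Cauchy we could find $\delta>0$ and indices $j_{m},l_{m}\to\infty$ with $|a^{(j_{m})}-a^{(l_{m})}|\ge\delta$, whence ${\rm d}(v^{(j_{m})},v^{(l_{m})})=N(\Phi(a^{(j_{m})}-a^{(l_{m})}))\ge m_{\delta}>0$, contradicting the ${\rm d}$-Cauchy property; this proves (i).

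For (ii), set $v:=\Phi(a)$ and $c^{(j)}:=a^{(j)}-a\to 0$ in $\R^{k}$; we must show $N(\Phi(c^{(j)}))\to 0$. If $\mathcal{L}^{n}(A)<\infty$ this is immediate, since $|\Phi(c^{(j)})|\wedge 1\to 0$ a.e.\ and is dominated by the constant $1\in L^{1}(A)$. In general (where ${\rm d}$ may a priori take the value $+\infty$), one first notes that $W:=\{b\in\R^{k}:N(\Phi(b))<\infty\}$ is a linear subspace: closure under sums follows from subadditivity of $N$, and closure under scalar multiples from the elementary bound $|\lambda t|\wedge 1\le(1\vee|\lambda|)\,(|t|\wedge 1)$. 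Since $(v^{(j)})$ is ${\rm d}$-Cauchy, its increments eventually have finite $N$, i.e.\ $a^{(j)}-a^{(l)}\in W$ for all $j,l$ large; fixing $l$, letting $j\to\infty$, and using that the finite-dimensional subspace $W$ is closed in $\R^{k}$, we obtain $c^{(j)}\in W$ for $j$ large. Finally, on $W$ the functional $b\mapsto N(\Phi(b))$ is continuous: choosing a basis $f_{1},\dots,f_{\ell}$ of $W$ and setting $\psi_{i}:=\Phi(f_{i})$, for $b=\sum_{i}\beta_{i}f_{i}$ one has $|\Phi(b)|\wedge 1\le(1\vee|\beta|_{\infty})\sum_{i}(|\psi_{i}|\wedge 1)$, whose right-hand side lies in $L^{1}(A)$ because each $N(\psi_{i})<\infty$; dominated convergence then gives $N(\Phi(c^{(j)}))\to 0$, which is (ii).

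The only genuine difficulty is that ${\rm d}$ is not induced by a norm: it is not positively homogeneous, and when $\mathcal{L}^{n}(A)=\infty$ scalar multiplication need not be ${\rm d}$-continuous, so $(L^{0}(A),{\rm d})$ is not a topological vector space and the classical equivalence of all norms on a finite-dimensional space is unavailable. The point of the argument is to replace that equivalence by the combination of Fatou's lemma (lower semicontinuity of $a\mapsto N(\Phi(a))$ together with the divergence estimate on $E$), compactness of bounded sets in $\R^{k}$, and — in the unbounded-measure case — the reduction to the closed subspace $W$ of finite-$N$ directions, on which $N\circ\Phi$ behaves like a norm-continuous functional.
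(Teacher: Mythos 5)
Your proof is correct, and it takes a genuinely different route from the paper's. The paper treats $V$ as a subset of the complete metric space $(L^0(A),\mathrm{d})$ and proves closedness: it first shows, by a compactness argument on normalized coefficient vectors, that the restriction map $v\mapsto v\restriction K$ remains injective on $V$ whenever $\mathcal{L}^n(A\setminus K)$ is below a threshold $\varepsilon_c$; then, given a Cauchy sequence, it takes its $\mathrm{d}$-limit $w$ in $L^0(A)$, uses Egorov's theorem to obtain uniform convergence on large sets $K$, writes $w\restriction K$ in the basis of $V\restriction K$, and shows the resulting coefficients stabilize as $K\nearrow A$, so that $w\in V$. You instead argue in coordinates: the coercivity estimate $\inf\{N(\Phi(a)):|a|\ge R\}>0$, obtained from lower semicontinuity of $a\mapsto N(\Phi(a))$ via Fatou together with the blow-up along limit directions $\hat a\in\mathbb{S}^{k-1}$, forces the coefficient vectors of a $\mathrm{d}$-Cauchy sequence to be Cauchy in $\mathbb{R}^k$, and the converse continuity of $a\mapsto N(\Phi(a))$ (dominated convergence, after restricting to the subspace $W$ of finite-$N$ directions when $\mathcal{L}^n(A)=\infty$) transfers the Euclidean limit back to a $\mathrm{d}$-limit inside $V$. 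What your approach buys: it is self-contained (no appeal to completeness of $L^0(A)$ nor to Egorov, the latter being the step in the paper's argument that implicitly uses $\mathcal{L}^n(A)<\infty$), and it handles explicitly the case in which $\mathrm{d}$ may take the value $+\infty$ on $V$. What the paper's approach buys: modulo those standard facts it is shorter, and it directly yields the form in which the proposition is used (identifying a pointwise a.e.\ limit of elements of $V$ as an element of $V$), which is all that is needed since in the application $A=(-1,1)$ has finite measure.
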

The proposition above is a direct consequence of Riesz Theorem (see for instance \cite[Theorem 1.21]{Rudin}). To keep the presentation self contained, we include a proof in the Appendix \ref{appenda}.

Recall the definition of $\Theta^{*n-1}$  (see \cite{Federer} for further details)
\[
\Theta^{*n-1}(\mu,x) :=\limsup_{r \to 0} \frac{\mu(B_r(x))}{r^{n-1}},
\]
for every measure $\mu \in \mathcal{M}^+(\Omega)$ and $x \in \Omega$.

\begin{proposition}
\label{p:keyprop1}
    Let $u \in {\rm GBV}^{\mathcal{E}}(\Omega;\mathbb{R}^n)$. Then, for $\hat{\mu}^1_u$-a.e. $x \in \Omega$, the condition $\Theta^{*n-1}(\hat{\mu}^1_u,x)=0$ implies that $x \notin J_{\bar u_{s}^\xi}$ for every $\xi \in \mathbb{S}^{n-1}$, with $s \in \R$ such that $x \in \Pi^\xi+s\xi$. 
\end{proposition}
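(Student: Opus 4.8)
The plan is to argue by contradiction through a blow-up analysis at a $\hat\mu^1_u$-generic point of the ``bad set''
\[
N:=\big\{x\in\Omega:\ \Theta^{*n-1}(\hat\mu^1_u,x)=0\ \text{ and }\ \mathcal H^{n-1}(\{\xi\in\mathbb S^{n-1}:0\in J_{\hat u^\xi_x}\})>0\big\}.
\]
Assume $\hat\mu^1_u(N)>0$. First I would reduce to a quantitatively controlled piece: since every one-dimensional jump has strictly positive amplitude, $N=\bigcup_{j,k\in\mathbb N}N_{j,k}$, where $N_{j,k}:=\{x\in N:\ \mathcal H^{n-1}(\{\xi\in\mathbb S^{n-1}:\,0\in J_{\hat u^\xi_x}\text{ and }|[\hat u^\xi_x](0)|>1/j\})>1/k\}$ is measurable, so there are $j,k$ with $\hat\mu^1_u(N_{j,k})>0$; set $\sigma:=1/j$ and $\delta:=1/k$. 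By the Besicovitch differentiation theorem, $\hat\mu^1_u$-a.e.\ point of $N_{j,k}$ is a point of $\hat\mu^1_u$-density one of $N_{j,k}$; fix such an $x_0\in N_{j,k}$; since $x_0\in N$ we also have $\Theta^{*n-1}(\hat\mu^1_u,x_0)=0$, i.e.\ $\hat\mu^1_u(B_r(x_0))=o(r^{n-1})$ as $r\to0$.

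Next I would blow up. For small $r>0$ set $v_r(z):=u(x_0+rz)$, $z\in B_1(0)$. Since dilations and translations of $\mathbb R^n$ leave each slicing direction $\xi$ unchanged, the slices of $v_r$ are exactly the rescaled slices of $u$ along the same directions — this is precisely where the flatness of $\mathbb R^n$ is used, and it is what renders any Korn-Poincar\'e inequality unnecessary. A change of variables in the slicing formula then gives $\mu^\xi_{v_r}(B_1(0))=r^{1-n}\,\mu^\xi_u(B_r(x_0))$ for every $\xi$, hence
\[
\hat\mu^1_{v_r}(B_1(0))=r^{1-n}\,\hat\mu^1_u(B_r(x_0))\ \longrightarrow\ 0\qquad(r\to0).
\]
At the same time, for every $\xi$ in the fixed set $S^\sigma:=\{\xi:0\in J_{\hat u^\xi_{x_0}}\text{ and }|[\hat u^\xi_{x_0}](0)|>\sigma\}$, which has $\mathcal H^{n-1}(S^\sigma)>\delta$, the slice $t\mapsto\hat u^\xi_{x_0}(rt)$ of $v_r$ through the origin carries at $t=0$ a jump of amplitude $|[\hat u^\xi_{x_0}](0)|>\sigma$ independent of $r$, and converges in $L^1_{\mathrm{loc}}(\mathbb R)$, as $r\to0$, to the step function with traces $\hat u^\xi_{x_0}(0^\pm)$.

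From $\hat\mu^1_{v_r}(B_1(0))\to0$ I would then extract the rigidity of the limit. Using the slicing, the vanishing of $\hat\mu^1_{v_r}(B_1(0))$, and Proposition~\ref{p:complete} — completeness of the finite-dimensional space of rigid motions in the $L^0$-metric, which is what controls the possible unboundedness of the slices in the blow-up — one obtains that, up to subtracting rigid motions (which do not affect $\hat\mu^1_{v_r}$) and passing to a subsequence, $v_r$ converges in measure on $B_1(0)$ to some $w$; by lower semicontinuity of $\hat\mu^1$ with respect to convergence in measure, $\hat\mu^1_w(B_1(0))=0$, so that $w$ has constant one-dimensional slices and, in particular, empty jump set.

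The contradiction should finally come from confronting this rigidity with the persistence of the jumps. For $\xi\in S^\sigma$ the blown-up slices $t\mapsto\hat u^\xi_{x_0}(rt)$ retain in the limit a genuine discontinuity of amplitude $>\sigma$, and this for a set of directions of $\mathcal H^{n-1}$-measure $\ge\delta$; coupling this with the fact that $x_0$ was chosen as a $\hat\mu^1_u$-density point of $N_{j,k}$ — so that, after blow-up, these discontinuities are carried not merely by the single slice through the origin but by a positive $\mathcal H^{n-1}$-fraction of the slices through nearby points — should yield a uniform lower bound $\hat\mu^1_{v_r}(B_1(0))\ge c(\sigma,\delta)>0$ for all small $r$, contradicting the limit computed above. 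I expect this last step to be the main obstacle: converting the jump carried by a single slice through the blow-up centre into an honest lower bound for $\hat\mu^1_{v_r}(B_1(0))$ forces one to propagate the discontinuity to positively many neighbouring slices, and this is exactly where the $\hat\mu^1_u$-density selection of $x_0$ and the integral-geometric structure of $\hat\mu^1_u$ — the very structure underlying the open problem for $p=1$ — come into play; I would carry it out following the scheme of \cite{AlmiTasso2}, with the flatness of $\mathbb R^n$ taking the place of the Korn-Poincar\'e estimate used there.
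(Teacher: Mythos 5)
Your setup coincides with the paper's: argue by contradiction, blow up at a point with $\Theta^{*\,n-1}(\hat\mu^1_u,x_0)=0$, use the scaling identity $\hat\mu^1_{u_r}(B_1(0))=r^{1-n}\hat\mu^1_u(B_r(x_0))\to 0$, and note that for each bad direction $\xi$ the slice of $u_r$ through the origin keeps a jump of fixed amplitude. But the two steps on which your contradiction rests are genuine gaps. First, the assertion that $\hat\mu^1_{v_r}(B_1(0))\to 0$ implies, ``up to subtracting rigid motions and passing to a subsequence,'' convergence in measure of $v_r$ to some $w$ with $\hat\mu^1_w(B_1(0))=0$ is exactly a Korn--Poincar\'e/compactness statement in the averaged space ${\rm GBV}^{\mathcal E}$, which is not available here and is precisely what the argument is designed to avoid; Proposition~\ref{p:complete} does not provide it --- it only says that any finite-dimensional subspace of $L^0$ is complete for convergence in measure, and in the actual proof it is applied to the span of the smooth functions $t\mapsto c_i(t\xi)\,\phi^+_j(t\xi-z_i)$, not to rigid motions, and no rigid motions are subtracted anywhere. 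Second, the final step you yourself flag as the main obstacle cannot be repaired by the density-point selection of $x_0$: a jump carried by the single one-dimensional slice through the blow-up centre contributes nothing to $\hat\mu^1_{v_r}(B_1(0))$, because for each direction the lines through a fixed point form an $\mathcal H^{n-1}$-negligible family of slices, and propagating the jump to a positive fraction of nearby slices is essentially the rectifiability question behind the $p=1$ open problem; so no uniform lower bound $\hat\mu^1_{v_r}(B_1(0))\ge c(\sigma,\delta)$ can be extracted this way.

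The paper's proof avoids both issues. The smallness of $\hat\mu^1_{u_r}$ is used only through the estimate $\int_\Omega O_x(u)\,\de x\le c_n(\Omega)\,\hat\mu^1_u(\Omega)$ and a Chebyshev argument, which selects $n$ points $z_1,\dots,z_n$ in generic position that are approximate continuity points of all $u_{r_k}$ and satisfy $O_{z_i}(u_{r_k})\to 0$. A one-dimensional Fundamental Theorem of Calculus estimate along the segments from $z_i$ then shows that $u_{r_k}(z)\cdot\phi^+(z)$ is, asymptotically, a linear combination (with coefficients the components of $u_{r_k}(z_i)$) of finitely many fixed smooth functions of $z$; restricting to a ray $z=t\xi$ with $\xi\in\Sigma$ and invoking Proposition~\ref{p:complete}, the limit $a_\xi\,\mathrm{sign}(t)+b_\xi$ must belong to a finite-dimensional space of smooth functions, forcing $a_\xi=0$ --- the contradiction. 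In other words, the rigidity is extracted pointwise along rays from finitely many good points, not from a compactness theorem for $v_r$ nor from a lower bound on $\hat\mu^1_{v_r}$; you would need to replace your two unsupported steps by this (or an equivalent) mechanism for the proof to go through.
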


\begin{proof}
In order to simplify the notation we set for every $x \in \Omega$ and $\xi \in \mathbb{S}^{n-1}$
    \begin{align*}
        O^\xi_x(u)&:=|D \hat{u}^\xi_x|(\Omega^\xi_x \setminus J^1_{\hat{u}^\xi_x}) + \mathcal{H}^0(\Omega^\xi_x \cap J^1_{ \hat{u}^\xi_x})   \\
        O_x(u)&:= \int_{\mathbb{S}^{n-1}}|D \hat{u}^\xi_x|(\Omega^\xi_x \setminus J^1_{\hat{u}^\xi_x}) + \mathcal{H}^0(\Omega^\xi_x \cap J^1_{ \hat{u}^\xi_x}) \, \di \mathcal{H}^{n-1}(\xi).
    \end{align*}
    
\subsubsection*{Step 1.} We claim that 
    \begin{align}
    \label{e:osc1}
    & \hat{u}^\xi_x \in \text{BV}_{loc}(\Omega^\xi_x), \  \ \text{ for $(\mathcal{L}^n \otimes \mathcal{H}^{n-1})$-a.e. $(x,\xi) \in \Omega \times \mathbb{S}^{n-1}$} \\
    \label{e:osc1.5}
    & \ \ \ \ \ \ (x,\xi) \mapsto O^\xi_x(u) \ \ \text{ is $(\mathcal{L}^n \otimes \mathcal{H}^{n-1})$-measurable} \\
    \label{e:osc2}
       & \ \ \ \ \ \ \ \ \ \ \ \ \ \int_{\Omega} O_x(u) \, \de x \leq c_n(\Omega) \hat{\mu}^1_u(\Omega).
    \end{align}
    Indeed, from the definition of $\text{GBV}^{\mathcal{E}}(\Omega;\mathbb{R}^n)$, we know that for $\mathcal{H}^{n-1}$-a.e. $\xi \in \mathbb{S}^{n-1}$ we have $\hat{u}^\xi_y \in \text{BV}_{loc}(\Omega^\xi_y)$ for $\mathcal{H}^{n-1}$-a.e. $y \in \Pi^\xi$. Equivalently, for $\mathcal{H}^{n-1}$-a.e. $\xi \in \mathbb{S}^{n-1}$ we have $\hat{u}^\xi_{\pi_\xi(x)} \in \text{BV}_{loc}(\Omega^\xi_{\pi_\xi(x)})$ for $\mathcal{L}^{n}$-a.e. $x \in \Omega$, where $\pi_\xi \colon \mathbb{R}^n \to \Pi^\xi$ denotes the orthogonal projection. Therefore, by using Fubini's Theorem on the product space $\Omega \times \mathbb{S}^{n-1}$ with measure $\mathcal{L}^n \otimes \mathcal{H}^{n-1}$, and the fact that $\hat{u}^\xi_x= \hat{u}^\xi_{\pi_\xi(x)}$, we immediately infer the validity of \eqref{e:osc1}. To prove \eqref{e:osc2}, in view of the measurability \eqref{e:osc1.5} (see Lemma \ref{l:measfin}), it is enough to make use of Fubini's Theorem and exchange the order of integration in $x$ and $\xi$.

\subsubsection*{Step 2.} Let us assume by contradiction that the measure $\hat{\mu}^1_u$ has null $(n-1)$-density at $x$ while the set $\Sigma \subset \mathbb{S}^{n-1}$ defined as
\[
\Sigma:=\{\xi \in \mathbb{S}^{n-1} : x \in J_{\bar{u}^\xi_s}\}
\]
is not empty. Fix $\xi \in \Sigma$.
By the same arguments used to prove the measurability of the set $E$ defined in \eqref{e:e}, it can be shown that  $\Sigma$ is $\mathcal{H}^{n-1}$-measurable.
\BBB


Let $x \in \Omega$, then $x \in \Pi^\xi+s\xi$ with $s=x\cdot \xi$.
Now define for every $0< r <\text{dist}(x, \partial \Omega)$ the rescaled function $u_{r} \colon B_1(0)  \to \mathbb{R}^n$ as $u_{r}(z) := u^+(x+rz)$, and $\bar u_{r} \colon B_1(0) \cap \Pi^\xi  \to \mathbb{R}^n$ as $\bar u_r(z):=\pi_\xi(u^+(x+rz))$.  

From formula \eqref{e:equality-mu-xi}, we deduce that $u_r \in \text{GBV}^{\mathcal{E}}(B_1(0);\mathbb{R}^n)$ and that $\hat{\mu}^\xi_{u_r}(B_1(0))= r^{1-n} \hat{\mu}^\xi_u(B_r(x))$ for $\mathcal{H}^{n-1}$-a.e. $\xi \in \mathbb{S}^{n-1}$.  
Let us further denote the map $\phi \colon \mathbb{R}^n \to \R^n$ defined as
\[
\phi(z):=
\begin{cases}
    \frac{z}{|z|}, &\text{ if } z \ne 0\\
    0, &\text{ if } z=0.
\end{cases}
\]

From our assumption, we have, for $\xi \in \Sigma$, that there exists $\nu \in \Pi^\xi \cap \mathbb{S}^{n-1}$ such that
\begin{equation}
\label{e:jumpcon}
\bar u_{r} \to f_{\nu,\xi} \ \text{ in measure }  
\text{ as $r \to 0^+$, \ where } \ f_{\nu,\xi}(z):= a_{\nu,\xi} \,\text{sign}(z\cdot \nu) + b_{\nu,\xi}
\end{equation}
for some $a_{\nu,\xi} \in \mathbb{R}^n \setminus \{0\}$ and $b_{\nu,\xi} \in \mathbb{R}^n$. 

\subsubsection*{Step 3.}
We claim that there exist a subsequence of radii $r_k \searrow 0$ and   vectors $\{z_1,\dotsc,z_{n2^n}\} \subset B_1(0) \setminus \{0\}$,  such that $\{z_{(j-1) n + 1},\ldots, z_{jn}\}$ are linearly independent vectors contained in the $j$-th orthant of $\R^n$, $z_i$ are points of approximate continuity for $u_{r_k}$, while \eqref{e:osc1} holds true for $x=z_i$ and $u=u_{r_k}$, and $O_{z_i}(u_{r_k}) \to 0^+$ for every $k=1,2,\dotsc$ as $k \to \infty$. Notice that, once the claim is proved, by the Fundamental Theorem of Calculus together with the concavity of the truncation function, there exists $j=1,\ldots, 2^n$ such that for $\mathcal{H}^{n-1}$-a.e. $z \in B_1(0) \cap \Pi^\xi$ and for every $i=(j-1)n+1,\dotsc,jn$, we have $\xi \cdot z_i >0$ and  
\begin{align}
    \label{e:osc3}
    |(u_{r_k}(z_i) - u_{r_k}(z)) \cdot  \phi (z-z_i)| \wedge 1  &\leq |\hat \mu_{{u}_r}^{ \phi (z-z_i)}|(B_1(0) ), \  k=1,2,\dotsc
\end{align}


We notice that the right-hand side of \eqref{e:osc3} is $\mathcal{H}^{n-1}$-measurable as a function of $z$. 
Indeed, we could have chosen the vectors $\{z_1,\dotsc,z_{n2^n}\}$ so that $\eta \mapsto 
|\hat \mu_{{u}_r}^{ \eta}|(B_1(0) )$ are measurable maps between the $\sigma$-algebra of $\mathcal{H}^{n-1}$-measurable subsets of $\mathbb{S}^{n-1}$ and the Borel $\sigma$-algebra of $\mathbb{R}$ (this follows similarly as in the proof of Lemma \ref{l:measfin}). 
Then, $z \mapsto |\hat \mu_{u_r}^{ \phi (z-z_i)}|(B_1(0) )$ is simply the composition of  $\eta \mapsto |\hat \mu_{u_r}^{ \eta}| (B_1(0) )$
with $z \mapsto \phi(z-z_i)$, which is Borel measurable and whose preimages of $\mathcal{H}^{n-1}$-negligible sets are $\mathcal{H}^{n-1}$-negligible.  

    To prove the claim we first notice that the assumption $\Theta^{n-1}(\hat{\mu}^1_u,x)=0$ implies that $\hat{\mu}^1_{u_r}(B_1(0)) \to 0$ as $r \to 0^+$. By letting $\lambda_r:= \hat{\mu}^1_{u_r}(B_1(0))$ we see from \eqref{e:osc2} applied to $u=u_r$ and $\Omega=B_1(0) \cap \Sigma_j$, where $\Sigma_j$ is the $j$-th orthant, that 
    \begin{equation*}
    \mathcal{L}^n(\{z \in B_1(0) \cap \Sigma_j : O_{z}(u_r) > c_n(\Omega)\sqrt{\lambda_r} \}) \leq \sqrt{\lambda_r}, \ \text{ for every $0 < r < \text{dist}(x,\partial \Omega)$}.
    \end{equation*}
    Therefore, by passing to a subsequence $r_k \searrow 0$ such that $\sum_k \sqrt{\lambda_{r_k}} \leq  \mathcal{L}^n(B_1(0))/2^{n+1}$, we have that the set $A:= \bigcap_k \{z \in B_1(0) \cap \Sigma_j: O_{z}(u_{r_k}) \leq c_n(\Omega)\sqrt{\lambda_{r_k}} \}$ satisfies $\mathcal{L}^n(A) >0$. In particular, we can find $\{z_{(j-1)n+1},\dotsc,z_{jn}\} \subset A \cap \Sigma_j$ in generic position such that $z_i$ is an approximate continuity point for $u_{r_k}$ and \eqref{e:osc1} holds true with $u=u_{r_k}$ and $x=z_i$ for every $k=1,2,\dotsc$ and $i=(j-1)n+1,\dotsc,jn$. The fact that $O_{z_i}(u_{r_k}) \to 0^+$ for every $i=(j-1)n+1,\dotsc,jn$ as $k \to \infty$ is a direct consequence of the fact that $\{z_{(j-1)n+1},\dotsc,z_{jn}\} \subset A$. The claim is thus proved. 
    
    \subsubsection*{Step 4.}  
    Fix $j=1, \ldots, 2^n$ such that $\xi \cdot z_{i}>0$ for $i=(j-1)n,\ldots, jn$. For simplicity assume $j=1$. We observe that for every $i = 1, \ldots, n$
     \begin{equation}
    \label{e:osc7}
        \lim_{r \to 0} \hat{\mu}_{u_{r}} ^{\phi^{+} (z - z_{i})} (B_{1} (0)) = 0 \qquad \text{for $\mathcal{H}^{n-1}$-a.e.~$z \in \Pi^{\xi}\cap  B_{1}(0)$}
    \end{equation}
    since $\Theta^{*n-1} (\hat{\mu}^{1}_{u} , x ) = 0$.

    Since $\{z_{1},\dotsc,z_{n}\}$ are linearly independent vectors, we obtain that
    \begin{equation}
    \label{e:osc6}
        \text{$\{\phi (z-z_1),\dotsc, \phi (z-z_n)\}$ is a basis of $\mathbb{R}^n$ for $\mathcal{H}^{n-1}$-a.e. $z \in \Pi^\xi \cap B_1(0)$}.
    \end{equation}
    Therefore, for $\mathcal{H}^{n-1}$-a.e. $z \in \Pi^\xi \cap (B_1(0) \setminus B_{1/2}(0))$ we find real smooth coefficients $\{c_1(z),\dotsc,c_n(z)\}$ such that $ \phi (z)=\sum_i c_i(z) \phi (z-z_i)$.
    In addition, combining \eqref{e:osc3} and \eqref{e:osc6} we infer for every $i=1,\dotsc,n$ that
    \begin{equation*}
        |(u_{r_k}(z_i)-u_{r_k}(z)) \cdot  \phi (z-z_i)| \wedge 1\to 0, \ \text{pointwise for $\mathcal{H}^{n-1}$-a.e.~$z \in \Pi^\xi\cap \big( B_1(0) \setminus B_{\frac{1}{2}}(0)\big )$ as $k \to \infty$}.
    \end{equation*}
    Therefore, for every sufficiently large $k$ (depending on $z$)
    \begin{align*}
        &|u_{r_k}(z) \cdot  \phi (z) - \sum_{i} c_i(z)u_{r_k}(z_i) \cdot  \phi (z-z_i)| \wedge 1 \\
        &\leq \sum_i |c_i(z)| |(u_{r_k}(z)-u_{r_k}(z_i))\cdot  \phi (z-z_i)| \wedge 1\\
        & \leq \sum_i |c_i(z)| |\hat \mu_{u_r}^{ \phi (z-z_i)}|(B_1(0))
        \end{align*}
        By \eqref{e:osc7}, the above inequality yields that for $\mathcal{H}^{n-1}$-a.e.~$z \in \Pi^{\xi}\cap \big ( B_{1}(0) \setminus B_{\frac{1}{2}}(0)\big )$
        \begin{equation*}
            |u_{r_k}(z) \cdot  \phi (z) - \sum_{i} c_i(z)u_{r_k}(z_i)\cdot  \phi (z-z_i)| \to 0, \ \text{ as $k \to \infty$}.
        \end{equation*}
        Thus, by recalling \eqref{e:jumpcon}, we have for $\mathcal{H}^{n-1}$-a.e.~$z \in \Pi^{\xi} \cap \big (B_{1}(0) \setminus B_{\frac{1}{2}}(0)\big )$ 
        \begin{equation}
         \label{e:osc8}
            |f_{\nu,\xi}(z) - \sum_{i} c_i(z)u_{r_k}(z_i) \cdot  \phi (z-z_i)| \to 0, \ \text{ as $k \to \infty$}.
        \end{equation}
        
        We conclude by showing that \eqref{e:osc8} gives a contradiction. 
        Consider the finite-dimensional real vector subspace $V$ of $L^0\big (\Pi^{\xi} \cap \big (B_{1}(0) \setminus B_{\frac{1}{2}} (0)\big ); \R^{n}\big)$ generated by the elements $\{v_{ij}\colon \Pi^{\xi} \cap \big (B_{1}(0) \setminus B_{\frac{1}{2}} (0) \big)  \to \mathbb{R}^{n} : i,j =1,\dotsc,n\}$, where $v_{ij}(z):= c_i(z) \phi_j (z-z_i)$ ($ \phi_j :=  \phi  \cdot e_j$). Condition \eqref{e:osc8}  combined with Proposition \ref{p:complete}  (recall that pointwise convergence implies convergence in measure) implies that the function $f_{\nu,\xi}$ restricted to $\Pi^{\xi} \cap \big (B_{1}(0) \setminus B_{\frac{1}{2}} (0) \big)$ belongs to $V$.
        Since the generators $v_{ij}$ are all smooth functions, any linear combination of them still belongs to $C^\infty(\Pi^{\xi} \cap \big (B_{1}(0) \setminus B_{\frac{1}{2}} (0)\big) ; \R^{n})$, forcing $a_{\nu,\xi}=0$. This is not possible because we assumed at the beginning $a_{\nu, \xi} \neq 0$ and we reach a contradiction. The proof is thus concluded. 
\EEE
\end{proof}

We are now in position to prove Theorem~\ref{t:slicejs}.

\begin{proof}[Proof of Theorem~\ref{t:slicejs}]
We proceed by induction. Observe that in dimension one the theorem is trivially true. Now, assume that the theorem holds true in dimension $n-1$. Therefore, from Proposition \ref{p:keyprop} we infer that the measure $\mathscr{I}_{u,1}$ concentrates on the complement in $\Omega$ of the set $E$ defined in \eqref{e:e}. Since by Proposition \ref{p:keyprop1} we have that $\Omega \setminus E \subset \{x \in \Omega : \Theta^{*n-1}(\hat{\mu}^1_u,x) >0\}$, then by applying the classical density estimates for Radon measure (cf. \cite{Federer}) we infer that $\Omega \setminus E$ is $\sigma$-finite with respect to  $\mathcal{H}^{n-1}$. We are thus in position to apply Besicovitch-Federer structure theorem \cite[]{Federer} to write $\Omega \setminus  E = R \cup U$ where $R \subset \Omega$ is countably $(n-1)$-rectifiable while the Borel set $U \subset \Omega$ satisfies
\[
\mathcal{H}^{n-1}(\pi_\xi(U)) =0, \ \ \text{ for $\mathcal{H}^{n-1}$-a.e. $\xi \in \mathbb{S}^{n-1}$}.
\]
Eventually, from the very construction of $\mathscr{I}_{u,1}$, the above property immediately implies that $\mathscr{I}_{u,1}(U)=0$ and hence $\mathscr{I}_{u,1}$ concentrates on the rectifiable set $R$. In particular we deduce the fundamental inclusion
\begin{equation*}
J_{\hat{u}^\xi_y} \subset R^\xi_y, \ \ \text{ for $\mathcal{H}^{n-1}$-a.e. $\xi \in \mathbb{S}^{n-1}$ and for $\mathcal{H}^{n-1}$-a.e. $y \in \Pi^\xi$}.
\end{equation*}
We can then conclude by arguing as in the proof of \cite[Theorem 4.12]{AlmiTasso2}.
\end{proof}

\subsubsection{Proof of Theorem~\ref{t:symmdiff}}
\label{s:symmdiff}
The proof follows the line of~\cite[Theorem~9.1]{DM2013}, we report here only the main differences.
 
 We set 
 \begin{equation}\label{e:Xi}
 \Xi:=\{\xi \in \R^n \setminus \{0\}: \hat u_y^\xi \in \text{BV}(\Omega_y^\xi) \text{ for a.e. } y \in \Pi^\xi \}.
 \end{equation}
 Observe that $\mathcal{L}^n(\R^n \setminus \Xi)=0$.
 Without loss of generality, we may assume that $u$ is a Borel function with compact support in $\Omega$ and that $\hat u_y^\xi \in \text{BV}(\Omega_y^\xi)$  for every $\xi \in \Xi$ and for every $y \in \Pi^\xi$. For every $x \in \Omega$ we define
 \begin{equation}\label{e:DM9.3}
 \hat u^\xi(x) :=\limsup_{\rho \to 0^+}\frac{1}{2\rho}\int_{-\rho}^\rho u(x+s\xi)\cdot \xi \, \de s,
 \end{equation}
 \begin{equation*}
 e^\xi(x) :=\limsup_{\rho \to 0^+}\frac{1}{2\rho}\int_{0}^\rho \frac{\hat u^\xi(x+s\xi)-\hat u^\xi(x)}{s} \,\de s.
 \end{equation*}
 We observe that $u^\xi$ and $e^\xi$ are Borel functions and have compact support on $\Omega$. In particular,
 \begin{equation}\label{e:DM9.5}
 e^{\rho \xi}(x)=\rho^2 e^\xi(x) \quad \text{for every } \rho >0 \text{ and every } x \in \Omega.
 \end{equation}
 
 By the Lebesgue Differentation Theorem, for every $y \in \Pi^\xi$ we have 
 \begin{equation*}
 (\hat u^\xi)_y^\xi=\hat u^\xi_y \quad \mathcal{L}^1\text{-a.e. in } \Omega_y^\xi.
 \end{equation*}
 Since $\hat u_y^\xi \in \text{BV}(\Omega_y^\xi)$ and $(\hat u^\xi)_y^\xi$ is a good representative of $\hat u_y^\xi$ by \eqref{e:DM9.3}, we obtain that
 \begin{equation}\label{e:DM9.7}
 \nabla \hat u_y^\xi(t)=\lim_{s \to 0} \frac{(\hat u^\xi)_y^\xi(t+s)-(\hat u^\xi)_y^\xi(t)}{s}=(e^\xi)_y^\xi(t)
 \end{equation}
 for every $y \in \Pi^\xi$ and for $\mathcal{L}^1$-a.e. $t \in \Omega_y^\xi$.

As in \cite[Theorem~9.1]{DM2013}, the following parallelogram identity holds
 \begin{equation}\label{e:DM9.14}
 e^{\xi+\eta}(x)+e^{\xi-\eta}(x)=2e^\xi(x)+2e^\xi(x) \quad \text{for a.e. } x \in \Omega,
 \end{equation}
 for every $\xi, \eta \in \R^n$ such that $\xi, \eta, \xi+\eta, \xi-\eta \in \Xi$.


 Recall \eqref{e:Xi}. Let $\xi_1 \in \Xi$.  By induction, we consider 
 \[\xi_k \in \Xi_k:=\Xi \cap \Bigg(\bigcap_{1\le i \le k-1 } \bigcap_{q \in \Q}\Xi+q\xi_i \Bigg)\] 
 and we remark that $\mathcal{L}^n(\R^n \setminus \Xi_k)=0$. Define $X$ as the vector space over $\mathbb{Q}$ generated by $\{\xi_k\}_{k \in \N}$. Since $\mathcal{L}^n(\R^n \setminus \Xi_k)=0$ for every $k \in \N$, the sequence $\{\xi_k\}_{k \in \N}$ can be chosen to be dense in $\R^n$.  We remark that, since $\Xi$ is closed by multiplication with scalars, then by construction $X \subset \Xi$. Since $X$ is countable and owing to \eqref{e:DM9.14}, there exists a Borel set $N \subset \Omega$ such that $\mathcal{L}^n(N)=0$ and the parallelogram identity
  \begin{equation}\label{e:DM9.15}
 e^{\xi +\eta}(x)+e^{\xi-\eta}(x)=2e^\xi(x)+2e^\xi(x)
 \end{equation}
 holds for every $x \in \Omega \setminus N$ and for every $\xi, \eta \in X$.

 Since $e^\xi(x)$ is also positively homogeneous of degree $2$ by \eqref{e:DM9.5}, we deduce by~\cite[Proposition~11.9]{DalMaso} that for every $x \in \Omega \setminus N$ there exists a symmetric bilinear form $B_x:X\times X \to \R$ such that
 \[e^\xi(x)=B_x(\xi,\xi)\]
 for every $\xi \in X$. This implies that for every $x\in \Omega \setminus N$ there exists a symmetric matrix $e(u)(x)\in \mathbb{M}^{n \times n}_{sym}$ such that
 \begin{equation}\label{e:DM9.16}
 e^\xi(x)=e(u)(x) \xi \cdot \xi
 \end{equation}
 for every $\xi \in X$.
 
 Let us fix $\xi_0 \in \Xi$. We want to prove that \eqref{e:DM9.16} holds for $\xi = \xi_0$ and for a.e. $x \in \Omega$. Let $X_0$ be the vector subspace over $\mathbb{Q}$ generated by $X \cup \{\xi_0\}$. Since $X_0$ is countable, there exists a Borel set $N_0 \subset \Omega$, with $N \subset N_0$ and $\mathcal{L}^n(N_0)=0$, such that \eqref{e:DM9.15} holds for every $x \in \Omega \setminus N_0$ and for every $\xi, \eta \in X_0$. Arguing as before, we prove that for every $x \in \Omega \setminus N_0$ there exists a symmetric matrix $A(x) \in  \mathbb{M}^{n \times n}_{sym}$ such that
 \begin{equation}\label{e:DM9.17}
 e^\xi(x)=A(x)\xi \cdot \xi
 \end{equation}
 for every $\xi \in X_0$. Since $X \subset X_0$ and $N \subset N_0$, equalities \eqref{e:DM9.16} and \eqref{e:DM9.17} hold for every $x \in \Omega \setminus N_0$ and for every $\xi \in X$. This implies that $A(x)=e(u)(x)$ for every $x \in \Omega \setminus N_0$. Since \eqref{e:DM9.17} holds for every $x\in \Omega \setminus N_0$ and for every $\xi\in X_0$, we deduce that the same is true for \eqref{e:DM9.16}. Since $\xi_0 \in X_0$, we conclude that \eqref{e:DM9.16} holds for $\xi =\xi_0$ for every $x \in \Omega \setminus N_0$.
 
 By the arbitrariness of $\xi_0$ we have shown that for a.e. $\xi \in \R^n$ we have
 \[e^\xi(x)=e(u)(x)\xi \cdot \xi \quad \text{a.e. in } \Omega. \]
Lastly, \eqref{e:ugg} follows by combining the above equality and \eqref{e:DM9.7}. 

Finally, the fact that $e(u) \in L^1(\Omega;\mathbb{M}^{n\times n}_{sym})$ follows from
\begin{equation*}
\begin{split}
\int_{\Omega}|e(u)(x)| \,\de x &\le c(n) \int_{\Omega}\int_{\Ss^{n-1}} |e(u)(x)\xi \cdot \xi| \,\de \xi \, \de x\\
&= \int_{\Ss^{n-1}} \int_{\Pi^\xi} \int_{\Omega_y^\xi} |e(u)_y^\xi(t)\xi \cdot \xi)| \,\de t \,\de y \, \de \xi \\
&\le \hat{\mu}^1_u(\Omega) <\infty.
\end{split}
\end{equation*}

\section{Proof of Theorem~\ref{compattezza}}
\label{s:proof-compactness}

We divide this section into two subsection. The first one contains a number of preliminary results that will be exploited in the second subsection, where the proof of Theorem~\ref{compattezza} is carried out.

\subsection{Preliminary results}

We start with a general estimate.

\begin{lemma}
\label{l:fundamentalest}
   For every positive integer $n$ it holds true
    \begin{equation}
    \label{e:fundamentalest}
        \sup_{e \in \mathbb{R}^{n}} \int_{B_4(0)\setminus B_{1/4}(0)}  \frac{|e|}{1+|e \cdot \eta|^2}  \, \de\mathcal{H}^n(\eta) < \infty.
    \end{equation}
\end{lemma}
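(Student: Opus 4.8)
The plan is to reduce the estimate to a one-dimensional integral by exploiting rotational symmetry and Fubini's theorem. For $e = 0$ the integrand vanishes, so it suffices to treat $e \neq 0$. Since $\mathcal{H}^n$ coincides with $\mathcal{L}^n$ on $\mathbb{R}^n$, and both the domain $B_4(0) \setminus B_{1/4}(0)$ and Lebesgue measure are invariant under rotations, while the integrand depends on $(e,\eta)$ only through $|e|$ and $e \cdot \eta$, an orthogonal change of variables lets us assume $e = |e|\, \mathbf{e}_1$, where $\mathbf{e}_1$ is the first canonical basis vector. Writing $\eta = (\eta_1, \eta') \in \mathbb{R} \times \mathbb{R}^{n-1}$, the integrand becomes $\frac{|e|}{1 + |e|^2 \eta_1^2}$, which no longer depends on $\eta'$.

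Next I would enlarge the domain of integration to the full ball $B_4(0)$, which only increases the integral since the integrand is nonnegative, and apply Fubini's theorem in the splitting $\eta = (\eta_1, \eta')$. For each fixed $\eta_1 \in (-4,4)$, the slice $\{\eta' \in \mathbb{R}^{n-1} : \eta_1^2 + |\eta'|^2 < 16\}$ is a ball of radius at most $4$ in $\mathbb{R}^{n-1}$, hence has $\mathcal{L}^{n-1}$-measure at most $\omega_{n-1} 4^{n-1}$, with $\omega_{n-1}$ the volume of the unit ball in $\mathbb{R}^{n-1}$. Therefore
\begin{equation*}
\int_{B_4(0)} \frac{|e|}{1 + |e|^2 \eta_1^2}\, \de \eta \le \omega_{n-1}\, 4^{n-1} \int_{-4}^{4} \frac{|e|}{1 + |e|^2 \eta_1^2}\, \de \eta_1.
\end{equation*}

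Finally, the change of variables $s = |e|\,\eta_1$ gives
\begin{equation*}
\int_{-4}^{4} \frac{|e|}{1 + |e|^2 \eta_1^2}\, \de \eta_1 = \int_{-4|e|}^{4|e|} \frac{\de s}{1 + s^2} \le \int_{\mathbb{R}} \frac{\de s}{1+s^2} = \pi ,
\end{equation*}
so the original integral is bounded by $\pi\, \omega_{n-1}\, 4^{n-1}$, a constant depending only on $n$; taking the supremum over $e \in \mathbb{R}^n$ yields \eqref{e:fundamentalest}. There is no genuine obstacle here: the only point worth stressing is that, although the integrand $\frac{|e|}{1+|e \cdot \eta|^2}$ blows up as $|e|\to\infty$ for $\eta$ nearly orthogonal to $e$, the measure of the set of such $\eta$ decays at rate $|e|^{-1}$, and the substitution $s = |e|\eta_1$ is precisely what makes this cancellation explicit and $e$-uniform.
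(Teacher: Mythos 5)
Your proof is correct, and it takes a genuinely different (and more elementary) route than the paper. The paper first applies the coarea formula radially, reducing the annulus integral to a uniform bound for $\sup_{e}\int_{\mathbb{S}^{n-1}}\frac{|e|}{1+|e\cdot\eta|^2}\,\de\mathcal{H}^{n-1}(\eta)$, and then runs a second coarea argument \emph{on the sphere} with the height function $\eta\mapsto\eta_1$; this forces it to split off the cap $\{\eta_1>\delta\}$ (where the integrand is trivially controlled) and to bound the tangential Jacobian from below on the strip $\{0\le\eta_1\le\delta\}$ before arriving at the one-dimensional integral $\int_0^1\frac{\lambda}{1+\lambda^2 t^2}\,\de t=\arctan\lambda\le\frac{\pi}{2}$. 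You instead stay in Cartesian coordinates: after the same rotation reducing to $e=|e|\mathbf{e}_1$, you enlarge the annulus to $B_4(0)$, apply Fubini in the splitting $\eta=(\eta_1,\eta')$, bound each slice by $\omega_{n-1}4^{n-1}$, and conclude with the substitution $s=|e|\eta_1$, obtaining the explicit bound $\pi\,\omega_{n-1}4^{n-1}$. The core cancellation ($|e|$ in the numerator against the $|e|^{-1}$ scaling of the substitution) is identical in both arguments, but your version avoids all spherical geometry (no Jacobian estimate, no $\delta$-splitting) at the modest cost of not producing the sphere estimate \eqref{e:fundamentalest1} as an intermediate statement; since the lemma is only ever invoked in the paper for annuli in $\xi$-space, nothing is lost.
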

\begin{proof}
    An application of Coarea Formula with the map $f \colon \mathbb{R}^n \to \mathbb{R}$ defined as $f(\eta):= |\eta|$ allows us to write
    \begin{align*}
    \sup_{e \in \R^n}&\int_{B_4(0)\setminus B_{1/4}(0)}  \frac{|e|}{1+|e \cdot \eta|^2}  \, \de\mathcal{H}^n(\eta) \\
    &=\sup_{e \in \R^n} \int_{1/4}^4 \bigg( \int_{\partial B_\rho(0)} \frac{|e|}{1+|e \cdot \eta|^2} \, \di \mathcal{H}^{n-1}(\eta) \bigg) \,\di \rho \\
    &= \sup_{e \in \R^n}\int_{1/4}^4 \bigg( \int_{\partial B_1(0)} \frac{\rho^{n-1}|e|}{1+\rho^2|e \cdot \eta|^2} \, \di \mathcal{H}^{n-1}(\eta) \bigg)\,  \di \rho\\
    & \leq 4^n \sup_{e \in \R^n}\int_{\mathbb{S}^{n-1}} \frac{|e|}{1+|e \cdot \eta|^2} \, \di \mathcal{H}^{n-1}(\eta)  .
    \end{align*}
Hence, it is enough to prove
\begin{equation}
\label{e:fundamentalest1}
  \sup_{e \in \mathbb{R}^n}  \int_{\mathbb{S}^{n-1}}  \frac{|e|}{1+|e \cdot \eta|^2}  \, \de\mathcal{H}^{n-1}(\eta) <\infty.
\end{equation}
We remark that, since the integrand in \eqref{e:fundamentalest1} is rotation invariant, we can assume $e=\lambda e_1$, where $\lambda \in \R$ and $e_1$ is the first element of the canonical basis.
    Moreover, we observe that by setting $\eta_1 := \eta \cdot e_1$ it is enough to show 
    \begin{equation*}
        \sup_{\lambda >0}\int_{\Ss_\delta^{n-1}}  \frac{\lambda}{1+\lambda^2\eta_1^2}  \,  \de\mathcal{H}^{n-1} (\eta) < \infty
    \end{equation*}
    for some $\delta \in (0,1)$,
    where $\Ss_\delta^{n-1}=\Ss^{n-1}\cap \{0\leq \eta_1\leq \delta\}.$ Indeed, the integral      
    \begin{equation*}
        \int_{\Ss^{n-1} \cap \{\eta_1>\delta\}}  \frac{\lambda}{1+\lambda^2\eta_1^2}  \,  \de\mathcal{H}^{n-1}  (\eta) 
    \end{equation*}
    is uniformly bounded by a constant only depending on $n$ and $\delta$.
    
    By applying the Coarea Formula with the map $f \colon \mathbb{S}^{n-1} \to \mathbb{R}$ defined as $f(\eta):= \eta_1$ we have
    \begin{align*}
        \int_{\Ss_\delta^{n-1}} &  \frac{\lambda}{1+\lambda^2\eta_1^2} \frac{|\text{J}_{\mathbb{S}^{n-1}}f(\eta)|}{|\text{J}_{\mathbb{S}^{n-1}}f(\eta)|}   \,  \de\mathcal{H}^{n-1}  (\eta) \\
        &\leq \sup_{\eta \in \Ss_\delta^{n-1}}  \frac{1}{|\text{J}_{\mathbb{S}^{n-1}}f(\eta)|} \int_0^1 \frac{\lambda}{1+\lambda^2t^2}\mathcal{H}^{n-2}(\Ss_\delta^{n-1} \cap \{\eta_1=t\}) \, \de t\\
        &\leq  C(n,\delta)  \int_0^1 \frac{\lambda}{1+\lambda^2t^2} \, \de t=C(n,\delta)\arctan(\lambda) \le C(n,\delta)\frac{\pi}{2}.
    \end{align*}
     This concludes the proof of~\eqref{e:fundamentalest}.
\end{proof}

We now recall some useful results shown in~\cite{Gobbino} for the study of nonlocal approximations of the Mumford-Shah functional. For the reader's convenience, we report the proof of Lemma~\ref{Gob5.3}, highlighting the ambient space $\Omega$ in the functional. The proofs of Lemmas~\ref{Lemma3.2}-\ref{lemmaG5.1} coincide with those of~\cite{Gobbino}.

\begin{lemma}[{\cite[Lemma 3.2]{Gobbino}}]\label{Lemma3.2}
    Let $I=[a,b]$ be an interval, let $\{u_{\varepsilon}\}_{\varepsilon>0} \subset L^1_{loc}(\R)$, and let $u \in L^1_{loc}(\R)$. Let us assume the following:
    \begin{itemize}
        \item[(i)] $u_{\varepsilon} \to u$ in $L^1_{loc}(\R);$
        \item[(ii)] $a$ and $b$ are Lebesgue points of $u$.
    \end{itemize}
    Then
    \begin{equation*}
        \liminf_{\varepsilon \to 0} F_{\varepsilon}(u_{\varepsilon},I) \ge \min \left \{ \frac{\pi}{2}, \frac{(u(b)-u(  a  ))^2}{b-a}\right \}.
    \end{equation*}
\end{lemma}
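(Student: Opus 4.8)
The plan is to reduce the inequality, via a discretization and a Fubini argument, to a one–dimensional super-additivity estimate for $\arctan$, and then to pass to the limit exploiting that $a$ and $b$ are Lebesgue points of $u$. If $\liminf_{\varepsilon\to0}F_\varepsilon(u_\varepsilon,I)=+\infty$ there is nothing to prove, so I would first pass to a (not relabelled) subsequence along which $F_\varepsilon(u_\varepsilon,I)$ converges to a finite limit; also the case $u(a)=u(b)$ is trivial, so I assume $u(a)\neq u(b)$.

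\emph{Step 1 (discretization).} Fix a mesoscopic length $h\in(0,(b-a)/3)$ and set $M_\varepsilon:=\lceil h/\varepsilon\rceil$, $N_\varepsilon:=\lfloor(b-a-2h)/\varepsilon\rfloor$, $h_\varepsilon:=M_\varepsilon\varepsilon$, so that $h_\varepsilon\to h$ and $N_\varepsilon\varepsilon\to b-a-2h$. Using the nonnegativity of the integrand of $F_\varepsilon$, writing $t=s+j\varepsilon$, applying Fubini on $[a,a+(M_\varepsilon+N_\varepsilon)\varepsilon]\subset I$, and regrouping the indices $j$ into blocks of $N_\varepsilon$ consecutive ones (each $j$ occurring with multiplicity $\le M_\varepsilon$), I would obtain
\[
F_\varepsilon(u_\varepsilon,I)\ \ge\ \frac{1}{h_\varepsilon}\int_a^{a+h_\varepsilon}E_\varepsilon(\sigma)\,\de\sigma,\qquad
E_\varepsilon(\sigma):=\sum_{i=0}^{N_\varepsilon-1}\arctan\!\Big(\tfrac{(u_\varepsilon(\sigma+(i+1)\varepsilon)-u_\varepsilon(\sigma+i\varepsilon))^2}{\varepsilon}\Big),
\]
where $E_\varepsilon(\sigma)$ is the energy of the $\varepsilon$-chain of length $N_\varepsilon\varepsilon$ issued from $\sigma$, and the increments of this chain sum to $D_\varepsilon(\sigma):=u_\varepsilon(\sigma+N_\varepsilon\varepsilon)-u_\varepsilon(\sigma)$.

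\emph{Step 2 (one–dimensional estimate).} Fix $\eta\in(0,1)$. I would classify an increment $\Delta$ of a chain as \emph{small} if $|\Delta|<\sqrt{\eta\varepsilon}$, \emph{medium} if $\sqrt{\eta\varepsilon}\le|\Delta|<\varepsilon^{1/4}$, and \emph{large} if $|\Delta|\ge\varepsilon^{1/4}$. On small increments $\arctan(x)\ge(1-\eta)x$, so by Cauchy–Schwarz the small increments contribute at least $(1-\eta)(N_\varepsilon\varepsilon)^{-1}(\sum_{\text{small}}\Delta)^2$; a single large increment already contributes $\arctan(\varepsilon^{-1/2})\ge\frac\pi2-\sqrt\varepsilon$; and — crucially — if $E_\varepsilon(\sigma)<\frac\pi2$ then the number of non-small increments is at most the \emph{universal} constant $K_\eta:=(\pi/2)/\arctan\eta$ (each contributing $\ge\arctan\eta$), whence $|\sum_{\text{medium}}\Delta|\le K_\eta\varepsilon^{1/4}=:\delta_\varepsilon\to0$ and $|\sum_{\text{small}}\Delta|\ge|D_\varepsilon(\sigma)|-\delta_\varepsilon$. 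Distinguishing the cases $E_\varepsilon(\sigma)\ge\frac\pi2$, a chain with a large increment, and a chain without, and using that the claimed lower bound never exceeds $\frac\pi2$, I would conclude that for all $\sigma$ and all small $\varepsilon$
\[
E_\varepsilon(\sigma)\ \ge\ \min\Big\{\tfrac\pi2-\sqrt\varepsilon,\ (1-\eta)\,\tfrac{(|D_\varepsilon(\sigma)|-\delta_\varepsilon)_+^2}{N_\varepsilon\varepsilon}\Big\}.
\]

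\emph{Step 3 (limit), and the main obstacle.} Since $u_\varepsilon\to u$ in $L^1_{loc}$ and $h$ is \emph{fixed}, $\frac1{h_\varepsilon}\int_a^{a+h_\varepsilon}|u_\varepsilon-u|\to0$ (and likewise near $b$), while continuity of translations in $L^1$ gives $u(\cdot+N_\varepsilon\varepsilon)\to u(\cdot+(b-a-2h))$; hence $D_\varepsilon\to D_0(\sigma):=u(\sigma+(b-a-2h))-u(\sigma)$ in $L^1([a,a+h])$, and because $a,b$ are Lebesgue points of $u$, $\frac1h\int_a^{a+h}|D_0-(u(b)-u(a))|\to0$ as $h\to0$. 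Consequently, for every $\gamma>0$ the measure of $\{\sigma\in[a,a+h_\varepsilon]:|D_\varepsilon(\sigma)|<|u(b)-u(a)|-\gamma\}$ is $o(h_\varepsilon)$ as $\varepsilon\to0$ and then $h\to0$; inserting this into the bound of Step 2 and splitting according to whether $(u(b)-u(a))^2/(b-a)\ge\frac\pi2$ gives $\liminf_\varepsilon F_\varepsilon(u_\varepsilon,I)\ge\min\{\frac\pi2,(1-\eta)(u(b)-u(a))^2/(b-a)\}$ after $h\to0$, and finally $\eta\to0$. I expect the real difficulty to be exactly this interplay of two facts: the saturation of $\arctan$ forces one to prevent energy from being spent on $O(\sqrt\varepsilon)$–sized (``medium'') oscillations — handled by the universal bound $K_\eta$ on the number of non-small increments of a sub-$\frac\pi2$ chain — and $L^1_{loc}$–convergence gives no pointwise control of $u_\varepsilon$ near the endpoints — handled by the mesoscopic averaging over $[a,a+h]$ before sending $h\to0$; this is precisely the technical content of \cite{Gobbino}.
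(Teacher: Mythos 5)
Your argument is correct, but note that the paper itself does not reprove this lemma: it states explicitly that the proof coincides with that of \cite{Gobbino}, so the relevant comparison is with Gobbino's original argument, and your proposal is essentially a self-contained reconstruction of it. The skeleton is the same: a Fubini/chain discretization reducing $F_{\varepsilon}(u_{\varepsilon},I)$ to an average over starting points $\sigma$ of discrete chain energies, a one-dimensional estimate capturing the dichotomy ``either one increment is of order $\sqrt{\varepsilon}$ or larger, costing almost $\pi/2$, or the displacement is carried by increments on which $\arctan$ is nearly linear, and Cauchy--Schwarz produces the Dirichlet term $D^2/L$'', and finally mesoscopic averaging of the chain endpoints over windows of fixed size $h$ near $a$ and $b$, which is exactly where the Lebesgue point hypothesis enters; this is also the only way to use $L^1_{loc}$ convergence, as you correctly identify. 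Where you genuinely deviate is in the implementation of the one-dimensional estimate: Gobbino exploits elementary superadditivity-type inequalities for $\arctan$ (of the same family as the inequality $\arctan\frac{(A+B)^2}{m+1}\le\arctan A^2+\arctan\frac{B^2}{m}$ that the paper uses in the proof of Lemma \ref{Gob5.3}), whereas you substitute the three-scale (small/medium/large) classification of increments with thresholds $\sqrt{\eta\varepsilon}$ and $\varepsilon^{1/4}$ and the universal bound $K_\eta$ on the number of non-small increments of a sub-$\frac{\pi}{2}$ chain; this is more pedestrian but perfectly sound, and it correctly avoids the trap of collapsing the whole chain by superadditivity into a single $\arctan\big(D^2/(N\varepsilon)\big)$, which would only give the weaker bound $\arctan\big((u(b)-u(a))^2/(b-a)\big)$ rather than the stated minimum. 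Two harmless inaccuracies to fix in a full write-up: the overlap count in Step 1 is $M_\varepsilon+1$ rather than $M_\varepsilon$, so the prefactor should be $\frac{1}{(M_\varepsilon+1)\varepsilon}$ (a $1+O(\varepsilon/h)$ correction that disappears in the limit), and the statement ``$o(h_\varepsilon)$ as $\varepsilon\to0$ and then $h\to0$'' in Step 3 should be phrased as the iterated limit (first $\varepsilon\to0$ at fixed $h$ via Chebyshev and the $L^1$ convergence of $D_\varepsilon$ to $D_0$, then $h\to0$ via the Lebesgue point property), exactly as your parenthetical remarks suggest.
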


\begin{lemma}[{\cite[Lemma 3.3]{Gobbino}}]\label{Lemma3.3}
    Let $u \in L^{\infty}(\R).$ Then there exists $a \in \R$ such that
    \begin{itemize}
        \item[(i)] $a+q$ is a Lebesgue point of $u$ for every $q \in \Q$;
        \item[(ii)] every sequence $\{u_k\}_{k \in \N} \subset L^\infty(\R)$ that satisfies the following conditions:
        \begin{itemize}
            \item $u_k(a+\frac{z}{k})=u(a+\frac{z}{k})$ for all $z \in \Z$;
            \item if $x \in [a+\frac{z}{k},a+\frac{z+1}{k}],$ then $u_k(x)$ belongs to the interval with endpoints $u(a+\frac{z}{k})$ and $u(a+\frac{z+1}{k})$;
        \end{itemize}
        has a subsequence converging to $u$ in $L^1_{loc}(\R)$.
    \end{itemize}
\end{lemma}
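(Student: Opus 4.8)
The plan is to show that $\mathcal{L}^1$-almost every $a\in\R$ satisfies (i) and (ii) at the same time, and then take any such $a$. Property (i) is the elementary half: if $L_u\subset\R$ denotes the conull set of Lebesgue points of $u\in L^1_{loc}(\R)$, then for each $q\in\Q$ the set $\{a\in\R:\ a+q\notin L_u\}=(\R\setminus L_u)-q$ is $\mathcal{L}^1$-negligible, hence so is the countable union $\bigcup_{q\in\Q}\big((\R\setminus L_u)-q\big)$; every $a$ off this union satisfies (i), and for such an $a$ each grid point $a+z/k$ ($z\in\Z$, $k\in\N$) is a Lebesgue point of $u$, so the pointwise values $u(a+z/k)$ occurring in (ii) are unambiguous.

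The substance lies in (ii). Fix $a$ as above, let $\{u_k\}$ be any admissible sequence, and write $I^k_z:=[a+z/k,a+(z+1)/k]$. Since $u_k(x)$ lies between $u(a+z/k)$ and $u(a+(z+1)/k)$ whenever $x\in I^k_z$, we have $|u_k(x)-u(a+z/k)|\le|u(a+(z+1)/k)-u(a+z/k)|$, and the triangle inequality gives, for every $R>0$,
\[
\int_{-R}^{R}|u_k-u|\,\di x\ \le\ E^R_k(a):=\sum_{z:\,z/k\in[-R-1,R+1]}\Big(\tfrac1k\,\big|u\big(a+\tfrac{z+1}{k}\big)-u\big(a+\tfrac{z}{k}\big)\big|+\int_{I^k_z}\big|u(x)-u\big(a+\tfrac{z}{k}\big)\big|\,\di x\Big).
\]
The right-hand side is a bound uniform over all admissible $u_k$, so it is enough to produce a subsequence $k_j$, depending on $u$ alone, along which $E^{R}_{k_j}(a)\to0$ for a fixed exhausting sequence $R=R_m\uparrow\infty$ and for $a$ in a conull set.

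To achieve this I would average $E^R_k$ over $a$. Setting $\omega(h):=\|u(\cdot+h)-u\|_{L^1(-R-2,\,R+2)}$, which tends to $0$ as $h\to0$ by continuity of translations in $L^1$, the change of variables $x=a+z/k+t$, Fubini's theorem, and the comparison of the resulting grid-sums with integrals of $u$ over a slightly larger interval yield
\[
\int_0^1 E^R_k(a)\,\di a\ \le\ 2\,\omega\big(\tfrac1k\big)+2\sup_{0\le t\le 1/k}\omega(t)\ =:\ \delta^R_k,
\]
and $\delta^R_k\to0$ as $k\to\infty$. Thus $E^R_k\to0$ in $L^1(0,1)$ for each fixed $R$. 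Now, with $R_m\uparrow\infty$, a diagonal extraction produces one subsequence $k_j$ such that $E^{R_m}_{k_j}\to0$ $\mathcal{L}^1$-a.e.\ on $(0,1)$ for every $m$ (take a subsequence along which $E^{R_1}_{k}\to0$ a.e., then a further one for $R_2$, and so on, each step discarding only a null set of $a$'s). For any $a$ in the resulting conull set the displayed inequality forces $\int_{-R_m}^{R_m}|u_{k_j}-u|\,\di x\to0$ for all $m$, that is $u_{k_j}\to u$ in $L^1_{loc}(\R)$; since $k_j$ was built from $u$ alone, this holds for \emph{every} admissible $\{u_k\}$. Intersecting this conull set with the one furnished by (i) and picking any point of the intersection concludes the argument.

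The main obstacle is the averaging estimate. One must set up the change of variables and the Fubini exchange carefully, so that the $a$-average of the piecewise-interpolation error telescopes onto the $L^1$-modulus of continuity $\omega$ of $u$, while absorbing the $O(1)$ grid intervals meeting $\{\pm R\}$ into an enlarged reference interval. Once this estimate is in place, passing from ``$L^1(0,1)$-convergence for each $R$'' to ``a single subsequence good for a.e.\ $a$ and every $R$ simultaneously'' is a routine diagonal argument, and (i) costs essentially nothing.
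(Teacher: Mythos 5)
Your proposal is correct: the pointwise interpolation bound, the Fubini/averaging estimate over the offset $a$ (controlled by the $L^1$-modulus of continuity of translations), and the diagonal extraction over an exhausting sequence $R_m\uparrow\infty$ together give a conull set of admissible $a$'s, and the slight index-range/window enlargements you flag are harmless bookkeeping. The paper does not reproduce a proof of this lemma (it defers to \cite{Gobbino}), and your argument is essentially the one given there, with the minor strengthening that a single subsequence $k_j$, depending only on $u$ and $a$, works simultaneously for every admissible sequence $\{u_k\}$.
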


\begin{lemma}[{\cite[Lemma~5.1]{Gobbino}}]\label{lemmaG5.1}
Let $\Omega \subset \R^n$ be an open set.  For  every $u\in L^0(\Omega, \R^m)$, every $E \Subset \Omega$, every $\delta >0$, and every $\xi \in \R^n$ such that $E + \delta \xi \subset \Om$, we have that 
    \begin{equation*}
        \int_E |\arctan(u(x+\delta \xi) \cdot \xi)-\arctan(u(x)\cdot \xi)| \de x\le C_E \delta (1+F_{\delta,\xi}(u,  E )),
    \end{equation*}
    for some $C_E>0$ only depending on $E$.
\end{lemma}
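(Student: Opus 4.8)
The plan is to reproduce M.~Gobbino's argument for \cite[Lemma~5.1]{Gobbino}: one reduces the translation estimate to an elementary scalar inequality for $\arctan$, and then integrates. Throughout, write $g:=u\cdot\xi$, which is well defined on $E\cup(E+\delta\xi)$ since $E+\delta\xi\subset\Om$, and, for $x\in E$, abbreviate
\[
A(x):=\arctan\!\Big(\tfrac{((u(x+\delta\xi)-u(x))\cdot\xi)^{2}}{\delta}\Big)=\arctan\!\Big(\tfrac{(g(x+\delta\xi)-g(x))^{2}}{\delta}\Big)\in\Big[0,\tfrac{\pi}{2}\Big],
\]
so that, by the very definition of the density, $\int_{E}A(x)\,\de x=\delta\,F_{\delta,\xi}(u,E)$ (the hypothesis $E+\delta\xi\subset\Om$ being precisely what makes the integrand well defined on all of $E$; if one insists on the domain $E\cap(E-\delta\xi)$ appearing in the definition of $F_{\delta,\xi}$, the residual boundary layer of $E$, where the integrand on the left-hand side is at most $\pi$, is estimated in the same manner).

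The crux is the scalar inequality: for all $a,b\in\R$ and all $\delta>0$,
\[
|\arctan a-\arctan b|\;\le\;4\Big(\arctan\tfrac{(a-b)^{2}}{\delta}+\sqrt{\delta\,\arctan\tfrac{(a-b)^{2}}{\delta}}\,\Big).
\]
I would prove it via the elementary bound $\arctan t\ge\tfrac{\pi}{4}\min\{t,1\}$ for $t\ge0$, which follows from concavity of $\arctan$ on $[0,\infty)$, together with a dichotomy. If $(a-b)^{2}\ge\delta$, then $\arctan\tfrac{(a-b)^{2}}{\delta}\ge\tfrac{\pi}{4}$, and since $\arctan$ has range $(-\tfrac{\pi}{2},\tfrac{\pi}{2})$ we get $|\arctan a-\arctan b|\le\pi\le4\arctan\tfrac{(a-b)^{2}}{\delta}$. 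If instead $(a-b)^{2}\le\delta$, then $\arctan\tfrac{(a-b)^{2}}{\delta}\ge\tfrac{\pi}{4}\tfrac{(a-b)^{2}}{\delta}$, i.e.\ $(a-b)^{2}\le\tfrac{4\delta}{\pi}\arctan\tfrac{(a-b)^{2}}{\delta}$, and the $1$-Lipschitz continuity of $\arctan$ yields $|\arctan a-\arctan b|\le|a-b|\le\tfrac{2}{\sqrt{\pi}}\sqrt{\delta\,\arctan\tfrac{(a-b)^{2}}{\delta}}$. In either regime the displayed bound holds.

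Then I would apply this inequality with $a=u(x+\delta\xi)\cdot\xi$ and $b=u(x)\cdot\xi$ and integrate over $E$, obtaining
\[
\int_{E}\big|\arctan(u(x+\delta\xi)\cdot\xi)-\arctan(u(x)\cdot\xi)\big|\,\de x\;\le\;4\int_{E}A(x)\,\de x+4\int_{E}\sqrt{\delta\,A(x)}\,\de x.
\]
The first term on the right is $4\delta\,F_{\delta,\xi}(u,E)$. For the second, Cauchy--Schwarz against the constant $1$ on $E$ gives $\int_{E}\sqrt{\delta A(x)}\,\de x\le|E|^{1/2}\big(\delta\int_{E}A(x)\,\de x\big)^{1/2}=|E|^{1/2}\,\delta\,F_{\delta,\xi}(u,E)^{1/2}$, and Young's inequality $t^{1/2}\le\tfrac12(1+t)$ bounds this by $\tfrac12|E|^{1/2}\delta\,(1+F_{\delta,\xi}(u,E))$. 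Adding the two contributions gives the assertion with $C_{E}:=4+2|E|^{1/2}$.

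There is no serious obstacle here; the only delicate point is to produce a full factor $\delta$ on the right-hand side rather than merely $\sqrt{\delta}$. This is exactly why the $\arctan$-increment must be decomposed into a piece directly comparable to the energy density $A(x)$ (which carries a whole power of $\delta$) and a piece comparable to $\sqrt{\delta\,A(x)}$, on which Cauchy--Schwarz converts the missing half-power of $\delta$ into a full one before Young's inequality is invoked to recover the $1+F_{\delta,\xi}(u,E)$ form.
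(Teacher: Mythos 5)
Your core argument is sound and is essentially the classical one behind \cite[Lemma~5.1]{Gobbino}, which is all the paper itself invokes (it does not reprove the lemma): bound the increment of $\arctan$ pointwise by the energy density plus a lower-order term, then integrate. Your scalar inequality checks out, and the Cauchy--Schwarz/Young step does correctly convert the half power of $\delta$ into a full one, yielding $C_E=4+2|E|^{1/2}$. For comparison, the standard route is slightly shorter: split at the threshold $|a-b|\lessgtr\delta$ (rather than $\sqrt{\delta}$), use $|\arctan a-\arctan b|\le\pi\,(|a-b|\wedge 1)$, and on $\{|a-b|\ge\delta\}$ note $\arctan\frac{(a-b)^2}{\delta}\ge\arctan|a-b|\ge\frac{\pi}{4}(|a-b|\wedge 1)$, so no Cauchy--Schwarz is needed; but your variant is perfectly correct for the main term.

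The one genuine weak point is the parenthetical in which you dispose of the mismatch between $\int_E A(x)\,\de x$ and $\delta F_{\delta,\xi}(u,E)=\int_{E\cap(E-\delta\xi)}A(x)\,\de x$: the residual layer $E\setminus(E-\delta\xi)$ cannot be ``estimated in the same manner''. On that layer $F_{\delta,\xi}(u,E)$, with the literal definition of Section~2, carries no information about $u$, and the only available pointwise bound for the left-hand integrand is $\pi$; but $|E\setminus(E-\delta\xi)|$ is not $O(\delta)$ with a constant depending only on $E$: for finite-perimeter sets it is of order $\delta|\xi|$ times the perimeter (so the constant would also pick up $|\xi|$), for an open $E\Subset\Om$ with infinite perimeter it can behave like $\delta\log(1/\delta)$, and in the extreme case $E\cap(E-\delta\xi)=\emptyset$ (compatible with the hypotheses when $\Om$ is large and $\delta|\xi|>\mathrm{diam}\,E$) one has $F_{\delta,\xi}(u,E)=0$ while the left-hand side can be of order $|E|$, so the inequality with that literal reading is simply false. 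The statement is correct --- and your main computation proves it --- only under the reading, implicit in the hypothesis $E+\delta\xi\subset\Om$ and actually used downstream (Lemma~\ref{Gob5.3} and Step~1 of the proof of Theorem~\ref{compattezza}), in which the directional energy on the right-hand side is taken with the integral over all of $E$ (equivalently, is dominated by the energy of a set containing all pairs $x,\,x+\delta\xi$ with $x\in E$, as in Gobbino's original formulation). State and use that reading explicitly instead of claiming the boundary layer can be absorbed.
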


\begin{lemma}[{\cite[Lemma 5.3]{Gobbino}}]\label{Gob5.3}
Let $\Omega \subset \R^n$ be an open set, $E \Subset \Omega$, $R \subset \R^{n}$, and $\eta >0$ be such that $R\subset \frac{\Omega - \Omega}{\eta}$ and
\[
{\rm dist} ( E, \partial \Om) \geq \eta \, \sup_{\xi \in R} |\xi|\,.
\]
Then, for every $u \in L^{0}(\Omega;\R^n)$, $\varepsilon>0$, and $m  \in \N$ such that $m \varepsilon <\eta$  we have that 
    \begin{equation}
    \label{e:gob5.3}
    \int_{R} F_{ m \varepsilon , \xi}(u,E ) \, \di \xi \le \int_{R} F_{\varepsilon, \xi}(u, \Omega) \, \di \xi \,.
    \end{equation}
\end{lemma}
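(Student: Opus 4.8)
The plan is to establish the stronger, pointwise-in-$\xi$ inequality $F_{m\varepsilon,\xi}(u,E)\le F_{\varepsilon,\xi}(u,\Omega)$ for every $\xi\in R$, and then integrate it over $R$ to obtain \eqref{e:gob5.3}. The mechanism is a telescoping decomposition of the finite difference at scale $m\varepsilon$ into $m$ consecutive finite differences at scale $\varepsilon$, combined with a one-variable convexity inequality for $\arctan$.

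First I would record the scalar inequality: for all $a_1,\dots,a_m\in\R$ and all $\varepsilon>0$,
\[
\arctan\left(\frac{(a_1+\dots+a_m)^2}{m\varepsilon}\right)\le\sum_{j=1}^{m}\arctan\left(\frac{a_j^2}{\varepsilon}\right),
\]
which follows by combining the Cauchy--Schwarz bound $(a_1+\dots+a_m)^2\le m(a_1^2+\dots+a_m^2)$ with the monotonicity of $\arctan$ and with its subadditivity on $[0,\infty)$ (a consequence of concavity on $[0,\infty)$ and $\arctan 0=0$). Next, fixing $\xi\in R$ and applying this with $a_j:=(u(x+j\varepsilon\xi)-u(x+(j-1)\varepsilon\xi))\cdot\xi$, so that $a_1+\dots+a_m=(u(x+m\varepsilon\xi)-u(x))\cdot\xi$, I would insert the bound into the definition of $F_{m\varepsilon,\xi}(u,E)$, exchange the finite sum with the integral, and perform in the $j$-th summand the translation $y=x+(j-1)\varepsilon\xi$, whose effect is to move the integration domain from $E\cap(E-m\varepsilon\xi)$ to $D_j:=(E+(j-1)\varepsilon\xi)\cap(E-(m-j+1)\varepsilon\xi)$. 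This yields
\[
F_{m\varepsilon,\xi}(u,E)\le\frac{1}{m\varepsilon}\sum_{j=1}^{m}\int_{D_j}\arctan\left(\frac{((u(y+\varepsilon\xi)-u(y))\cdot\xi)^2}{\varepsilon}\right)\de y.
\]

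The key step — and the only place where the hypotheses enter — is the geometric containment $D_j\subset\Omega\cap(\Omega-\varepsilon\xi)$ for every $j=1,\dots,m$, which guarantees that the integrand above is well defined and nonnegative on $D_j$. Indeed, if $y\in D_j$ then $y=e+(j-1)\varepsilon\xi$ and $y+\varepsilon\xi=e'+j\varepsilon\xi$ for some $e,e'\in E$; since $(j-1)\varepsilon,\,j\varepsilon\le m\varepsilon<\eta$ while ${\rm dist}(E,\partial\Omega)\ge\eta\sup_{\xi\in R}|\xi|\ge\eta|\xi|$, both $y$ and $y+\varepsilon\xi$ lie within distance strictly less than ${\rm dist}(E,\partial\Omega)$ of $E$, hence in $\Omega$. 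As the integrand is nonnegative, each of the $m$ integrals over $D_j$ is bounded above by $\int_{\Omega\cap(\Omega-\varepsilon\xi)}\arctan(((u(y+\varepsilon\xi)-u(y))\cdot\xi)^2/\varepsilon)\,\de y=\varepsilon\,F_{\varepsilon,\xi}(u,\Omega)$; summing, the prefactor $1/(m\varepsilon)$ cancels and we obtain $F_{m\varepsilon,\xi}(u,E)\le F_{\varepsilon,\xi}(u,\Omega)$. Integrating in $\xi$ over $R$ then gives \eqref{e:gob5.3}.

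I do not foresee a genuine obstacle: the scalar inequality is elementary and the change of variables is just a translation. The one point that must be handled with care is precisely the containment $D_j\subset\Omega\cap(\Omega-\varepsilon\xi)$, i.e.\ verifying that every intermediate node $x+j\varepsilon\xi$ of the telescoping chain stays inside $\Omega$ so that $u$ is defined there — which is exactly what the standing assumptions $m\varepsilon<\eta$ and ${\rm dist}(E,\partial\Omega)\ge\eta\sup_{\xi\in R}|\xi|$, together with $E\Subset\Omega$, are tailored to guarantee.
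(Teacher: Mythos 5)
Your proof is correct and follows essentially the same route as the paper's: both rest on splitting the increment at scale $m\varepsilon$ into increments at scale $\varepsilon$, the monotonicity and subadditivity (concavity) of $\arctan$ combined with a Cauchy--Schwarz-type bound, a translation of the integration domain, and the hypotheses $m\varepsilon<\eta$ and ${\rm dist}(E,\partial\Omega)\ge \eta\sup_{\xi\in R}|\xi|$ to guarantee that all intermediate points $x+j\varepsilon\xi$ stay in $\Omega$. The only (harmless) difference is organizational: the paper peels off one increment at a time by induction on $m$, using the weighted inequality $\arctan\big(\tfrac{(A+B)^2}{m+1}\big)\le\arctan(A^2)+\arctan\big(\tfrac{B^2}{m}\big)$ and working with the quantities already integrated over $\xi\in R$, whereas you unroll the telescoping in one step and obtain the slightly stronger pointwise-in-$\xi$ bound $F_{m\varepsilon,\xi}(u,E)\le F_{\varepsilon,\xi}(u,\Omega)$ before integrating.
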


\begin{proof}
Given $\varepsilon >0$, we prove the statement by induction over $m$. If $m=1$ there is nothing to prove. Let us assume that~\eqref{e:gob5.3} holds for $ m $ and prove it for $ m+1 $,  assuming that $ (m + 1) \varepsilon  < \eta $. In particular, this implies that $E \subset \Om - (m+1) \varepsilon \xi$ for every $\xi \in R$. 
For $A, B \in \R$ it holds
\begin{displaymath}
\arctan \bigg( \frac{(A+ B)^{2}}{ m+1 } \bigg) \leq \arctan (A^{2}) + \arctan \bigg( \frac{B^{2}}{ m } \bigg) \,.
\end{displaymath}
Applying such inequality to 
\begin{align*}
A = \frac{ (u(x + ( m+1 ) \varepsilon \xi) - u(x +  m \varepsilon \xi)) \cdot \xi}{ \sqrt{\varepsilon}} \qquad B = \frac{ (u(x +  m  \varepsilon \xi) - u(x)) \cdot \xi}{\sqrt{\varepsilon}}\,,
\end{align*}
with $x \in E$ and $\xi \in R$, we get that
\begin{align*}
 & \frac{1}{( m+1 )\varepsilon  } \arctan \bigg( \frac{ \big( (u(x +  (m+1) \varepsilon \xi) - u(x) ) \cdot \xi \big) ^{2}}{(m +1)  \varepsilon} \bigg) 
 \\
 &
 \quad \leq \frac{1}{( m +1)  \varepsilon} \arctan \bigg( \frac{ \big( ( u ( x +  (m+1)  \varepsilon \xi) - u( x +  m \varepsilon \xi) ) \cdot \xi \big)^{2}}{\varepsilon} \bigg) 
 \\
 &
 \qquad +  \frac{m}{(m+1)}\frac{1}{ m  \varepsilon} \arctan \bigg( \frac{ \big( (u(x +  m \varepsilon \xi) - u(x)) \cdot \xi \big)^{2}}{ m  \varepsilon} \bigg) 
\end{align*}
Integrating over $x \in E$ and $\xi \in R$ and performing a change of variable in the first integral on the right-hand side we obtain
\begin{align*}
   \int_{R}  F_{( m+1  )\varepsilon, \xi} (u, E) \, \di \xi & \leq  \frac{1}{( m+1  ) \varepsilon} \int_{R} \int_{E +  m \varepsilon \xi} \arctan \bigg( \frac{ \big( ( u ( x +  \varepsilon \xi) - u( x ) ) \cdot \xi \big)^{2}}{\varepsilon} \bigg) \, \di x \, \di \xi
   \\
   &
   \qquad +   \frac{m}{(m+1)} \int_{R} F_{ m \varepsilon, \xi} (u, E) \, \di \xi
   \\
   &
   \leq \frac{1}{( m  +1)} \int_{R} F_{\varepsilon, \xi} (u, \Omega) \, \di \xi +  \frac{m}{(m+1)}  \int_{R} F_{ m \varepsilon, \xi} (u, E) \, \di \xi\,,
\end{align*}
where, in the last inequality, we have used that $E + m \varepsilon \xi \subset \Om \cap \Om - \varepsilon \xi$. Hence, we conclude for~\eqref{e:gob5.3} by the induction hypothesis.
\end{proof}

We conclude this section with the following lemma, which characterizes the  density~$\varphi_{p}$  and the constant~$\beta_p$ appearing in~\eqref{semi378} and in the definition of the functional~$\mathcal{F}^{p}$ in~\eqref{e:limitF}.

\begin{lemma}
\label{limite}
Let $p \ge 1$, let $\Omega \subset \R^n$ be an open set, and let $\varphi_{p} \colon \mathbb{M}^{n \times n}_{sym} \to [0,\infty)$ be the map with quadratic growth defined as 
\begin{align*}
     \varphi_{p} (A)&:= \bigg(\int_{\R^n} \left ( |A \xi \cdot \xi|^2   \right )^{p}|\xi|^p e^{-|\xi|^2} \de \xi \bigg)^{\frac{1}{p}} \qquad  \text{for $A \in \mathbb{M}^{n \times n}_{sym}$ and $p <\infty$}.
\end{align*}  
Then, there exists a positive constant $\beta_{p} >0$ such that for every $u \in  \textnormal{GSBD}(\Omega;\R^n)  $ it holds
    \begin{align*}
         \sup_{\mathscr{B}} \sum_{B \in \mathscr{B}}\frac{\pi}{2}&\left (\int_{\R^n} \left (\int_{\Pi^\xi} \mathcal{MS}_{\frac{2}{\pi}}(\hat u_y^\xi,B_y^\xi) \, \de \mathcal{H}^{n-1}(y) \right )^{p}|\xi|^p e^{-|\xi|^2} \de \xi\right )^{\frac{1}{p}}\\
         &=\!\int_{\Omega}  \varphi_{p}  (e(u))\,  \de x +  \beta_p  \mathcal{H}^{n-1}(J_{u}),
    \end{align*}
 where $\mathscr{B}$ is the set of all possible finite families of pairwise disjoint balls in $\Omega$.
\end{lemma}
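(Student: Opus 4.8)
The plan is to reduce the sliced, nonlocal left-hand side to a closed form by means of the slicing structure of $\mathrm{GSBD}$ functions, and then to recognise the claimed identity as the equality between the total variation of an $L^{p}$-valued Radon measure and its Radon--Nikodym integral; the explicit densities $\varphi_{p}$ and $\beta_{p}$ will drop out of this last computation.

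\emph{Step 1 (slicing).} Fix $u\in\mathrm{GSBD}(\Omega)$. By Theorem~\ref{t:main-emanuele}, $u\in\mathrm{GSBV}^{\mathcal E}(\Omega;\R^{n})$ with $\hat{\mu}^{p}_{u}(\Omega)<\infty$, so Theorems~\ref{t:symmdiff} and~\ref{t:slicejs} apply and $J_{u}$ is countably $(n-1)$-rectifiable. For $\mathcal{H}^{n-1}$-a.e.\ $\xi\in\mathbb{S}^{n-1}$ one then has $\nabla\hat{u}^{\xi}_{y}(t)=e(u)(y+t\xi)\,\xi\cdot\xi$ and $J_{\hat{u}^{\xi}_{y}}=(J_{u})^{\xi}_{y}$ (the latter because $[u]\cdot\xi\neq0$ $\mathcal{H}^{n-1}$-a.e.\ on $J_{u}$ for a.e.\ $\xi$); combining this with Fubini, the disintegration $x=y+t\xi$ of $\mathcal{L}^{n}$ over $\Pi^{\xi}\times\R$ (whose Jacobian factor is $|\xi|$), and the classical integral-geometric slicing formula for the rectifiable set $J_{u}$, one obtains, for every ball $B\Subset\Omega$,
\[
\int_{\Pi^{\xi}}\mathcal{MS}_{2/\pi}(\hat{u}^{\xi}_{y},B^{\xi}_{y})\,\de\mathcal{H}^{n-1}(y)=\frac{1}{|\xi|}\,P_{B}(\xi),\qquad P_{B}(\xi):=\frac{2}{\pi}\int_{B}|e(u)\xi\cdot\xi|^{2}\,\de x+\int_{J_{u}\cap B}|\nu_{u}\cdot\xi|\,\de\mathcal{H}^{n-1}.
\]
Inserting this into the left-hand side, the weight $|\xi|^{p}$ absorbs the factor $|\xi|^{-p}$, and the statement becomes
\[
\sup_{\mathscr{B}}\sum_{B\in\mathscr{B}}\frac{\pi}{2}\,\bigl\|P_{B}\bigr\|_{L^{p}(e^{-|\xi|^{2}}\de\xi)}=\int_{\Omega}\varphi_{p}(e(u))\,\de x+\beta_{p}\,\mathcal{H}^{n-1}(J_{u}).
\]

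\emph{Step 2 (reduction to a vector measure).} I would introduce the Radon measure $\lambda:=\mathcal{L}^{n}\restr\Omega+\mathcal{H}^{n-1}\restr J_{u}$ and the strongly measurable map $g\colon\Omega\to L^{p}(e^{-|\xi|^{2}}\de\xi)$ defined by $g(x)(\xi):=\tfrac{2}{\pi}|e(u)(x)\xi\cdot\xi|^{2}$ for $x\notin J_{u}$ and $g(x)(\xi):=|\nu_{u}(x)\cdot\xi|$ for $x\in J_{u}$; since $\mathcal{L}^{n}(J_{u})=0$ one has $P_{B}=\int_{B}g\,\de\lambda$ for every ball $B$. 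The rotational invariance of the Gaussian gives $\|g(x)\|_{L^{p}}=\tfrac{2}{\pi}\varphi_{p}(e(u)(x))$ for $x\notin J_{u}$ and $\|g(x)\|_{L^{p}}$ equal to a positive constant $\tfrac{2}{\pi}\beta_{p}$ for $x\in J_{u}$; this identifies $\varphi_{p}$ and the constant $\beta_{p}>0$ and shows that the right-hand side above equals $\tfrac{\pi}{2}\int_{\Omega}\|g\|_{L^{p}}\,\de\lambda$. Thus the claim is equivalent to $\sup_{\mathscr{B}}\sum_{B\in\mathscr{B}}\|\nu(B)\|_{L^{p}}=\int_{\Omega}\|g\|_{L^{p}}\,\de\lambda$ for the $L^{p}$-valued measure $\nu:=g\lambda$, i.e.\ that the supremum over finite families of pairwise disjoint balls recovers the full variation $|\nu|(\Omega)$. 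The inequality ``$\le$'' follows at once from Minkowski's integral inequality applied ball by ball together with the disjointness of the balls, and it also disposes of the case where the right-hand side is $+\infty$; so one may assume $\int_{\Omega}\|g\|_{L^{p}}\,\de\lambda<\infty$.

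\emph{Step 3 (lower bound, the main point).} Since $L^{p}(e^{-|\xi|^{2}}\de\xi)$ is separable ($1<p<\infty$), $g\in L^{1}(\lambda;L^{p})$ and the Besicovitch differentiation theorem for the Radon measure $\lambda$ on $\R^{n}$ gives that $\lambda$-a.e.\ $x_{0}$ is a Lebesgue point, $\lambda(B_{r}(x_{0}))^{-1}\int_{B_{r}(x_{0})}\|g-g(x_{0})\|_{L^{p}}\,\de\lambda\to0$; at such a point $\|\nu(B_{r}(x_{0}))\|_{L^{p}}\ge(1-o(1))\,\lambda(B_{r}(x_{0}))\,\|g(x_{0})\|_{L^{p}}$ and $\int_{B_{r}(x_{0})}\|g\|_{L^{p}}\,\de\lambda=(1+o(1))\,\lambda(B_{r}(x_{0}))\,\|g(x_{0})\|_{L^{p}}$. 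Fixing $\varepsilon>0$, the balls $B_{r}(x_{0})\Subset\Omega$ with $x_{0}$ a Lebesgue point, $r$ small, $\|\nu(B_{r}(x_{0}))\|_{L^{p}}\ge(1-\varepsilon)\int_{B_{r}(x_{0})}\|g\|_{L^{p}}\,\de\lambda$, and with radii chosen to avoid the at most countably many $\lambda$-charged spheres centred at $x_{0}$ (so the balls may be taken open), form a fine cover of $\lambda$-a.e.\ point of $\Omega$; the Besicovitch--Vitali covering theorem extracts a countable pairwise disjoint subfamily $\{B_{i}\}$ with $\lambda(\Omega\setminus\bigcup_{i}B_{i})=0$, whence $\sum_{i}\|\nu(B_{i})\|_{L^{p}}\ge(1-\varepsilon)\sum_{i}\int_{B_{i}}\|g\|_{L^{p}}\,\de\lambda=(1-\varepsilon)\int_{\Omega}\|g\|_{L^{p}}\,\de\lambda$. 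Since the series on the left is finite, a finite subfamily realises the same bound up to a further $o(1)$, and letting $\varepsilon\to0$ proves ``$\ge$''; when $\Omega$ is unbounded (so $\lambda(\Omega)$ may be infinite) one first runs the argument on an exhausting sequence of bounded open subsets on which $\lambda$ is finite and then takes the supremum. The substantive difficulty is precisely this step — that passing to finite families of disjoint balls is lossless, so the supremum equals the total variation — which hinges on the Besicovitch differentiation and covering machinery for the $L^{p}$-valued density $g$ with respect to a measure $\lambda$ that simultaneously carries an $n$-dimensional and an $(n-1)$-dimensional part; Step~1 and the Minkowski bound in Step~2 are routine.
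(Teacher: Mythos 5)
Your proposal is sound in its overall architecture and proves the lemma by a route that is packaged genuinely differently from the paper's, although it rests on the same two pillars: the slicing identities coming from Theorems \ref{t:symmdiff} and \ref{t:slicejs} together with the area formula on the rectifiable set $J_u$, and a Besicovitch-type differentiation with respect to $\lambda=\mathcal{L}^n+\mathcal{H}^{n-1}\restr J_u$. After the same Step 1, the paper introduces the scalar set function $\mu(u,A)=\sup_{\mathscr{B}_A}\sum_{B}\zeta(u,B)$, extends it to a Borel measure by De Giorgi--Letta, bounds it by the Griffith energy so that it decomposes as $f\,\mathcal{L}^n+g\,\mathcal{H}^{n-1}\restr J_u$, and computes the two densities separately by blow-up: $f=\varphi_p(e(u))$ at Lebesgue points of $e(u)$ where $J_u$ has vanishing density, and $g=\beta_p$ at $\mathcal{H}^{n-1}$-a.e.\ jump point, via near-optimal ball families and an explicit $(1+\delta)$, $(1+1/\delta)$ splitting, with both inequalities checked for each density. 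You instead read the per-ball quantity as $\|\nu(B)\|_{L^p}$ for the $L^p$-valued measure $\nu=g\lambda$ and prove that the packing supremum over finite disjoint balls equals $\int_\Omega\|g\|_{L^p}\,\de\lambda$ by vector-valued Lebesgue points plus the Vitali--Besicovitch covering theorem; both densities then drop out at once from $\|g(x)\|_{L^p}$, the surface one being constant by rotational invariance of the Gaussian. This isolates the real content (losslessness of restricting to finite disjoint ball families) as a single abstract statement, at the price of Bochner-integration language, whereas the paper runs the blow-up twice but stays scalar.

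Three points need fixing or flagging. (i) Weights: your slicing gives $\int_{\Pi^\xi}\mathcal{MS}_{2/\pi}(\hat u^\xi_y,B^\xi_y)\,\de\mathcal{H}^{n-1}(y)=|\xi|^{-1}P_B(\xi)$, so after cancellation your reduced expression carries the weight $e^{-|\xi|^2}$ only; your identification $\|g(x)\|_{L^p(e^{-|\xi|^2}\de\xi)}=\tfrac{2}{\pi}\varphi_p(e(u)(x))$ therefore does not match the $\varphi_p$ of the statement, which carries an extra $|\xi|^p$ (the paper's proof keeps the weight $|\xi|^pe^{-|\xi|^2}$ in its reduced quantity $\zeta(u,B)$, and likewise in $\beta_p$). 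As written, your Steps 1 and 2 cannot both be compatible with the stated densities: track the Jacobian $|\xi|$ of $(y,t)\mapsto y+t\xi$ explicitly and reconcile the normalization. (ii) The case of an infinite right-hand side is not vacuous in ${\rm GSBD}$ ($e(u)\in L^1$ need not be square-integrable, and $\mathcal{H}^{n-1}(J_u)$ may be infinite) and is not ``disposed of'' by the inequality $\le$; there you must show the left-hand side is infinite, e.g.\ using $\int_{\mathbb{S}^{n-1}}|A\xi\cdot\xi|^2\,\de\mathcal{H}^{n-1}(\xi)\ge c_n|A|^2$ on a single ball with $\int_B|e(u)|^2=\infty$, and exhaustion otherwise. (iii) In Step 3 the relative estimate $\|\nu(B_r(x_0))\|\ge(1-o(1))\lambda(B_r(x_0))\|g(x_0)\|$ is meaningful only where $\|g(x_0)\|>0$; restrict the fine cover to that set (its complement contributes nothing to $\int\|g\|\,\de\lambda$). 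None of these affects the structure of your argument.
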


\begin{proof}
We recall that 
\begin{equation*}
          \frac{\pi}{2}\mathcal{MS}_{\frac{2}{\pi}}(\hat u_y^\xi,B_y^\xi)=  \int_{B_y^\xi}|\nabla \hat u_y^\xi|^2 \de  \tau + \frac{\pi}{2}\mathcal{H}^{0}(J_{\hat u_y^\xi} \cap B_y^\xi) .
\end{equation*}
Integrating the expression above in $y \in \Pi^\xi$, by Theorem~\ref{t:symmdiff} we obtain for the first term
\begin{equation*}
\begin{split}
 \int_{\Pi^\xi}  \int_{B_y^\xi} |\xi| | \nabla \hat{u}^{\xi}_{y}|^{2}\, \di \tau \, \di \mathcal{H}^{n-1} (y) &
 =  \int_{\Pi^\xi}  \int_{B_y^\xi} |\xi||\nabla u(y+\tau \xi)\cdot \xi|^2 \, \de \tau\,  \de \mathcal{H}^{n-1}(y) 
 \\
 &
 =\int_{\Pi^\xi} \int_{B_y^\xi} |\xi||e(u)(y+\tau \xi )\xi \cdot \xi|^2 \, \de \tau \,  \de \mathcal{H}^{n-1}(y)\\
&
=\int_{B}  |\xi| |e(u)(x )\xi \cdot \xi|^2  \, \de x.
\end{split}
\end{equation*}
On the other hand, by the area formula and Theorem \ref{t:slicejs}, for the second term we get
\begin{equation*}
    \begin{split}
    \int_{\Pi^\xi} |\xi|\mathcal{H}^0(J_{\hat u^\xi_y} \cap B)\,  \de \mathcal{H}^{n-1}(y)=  \int_{J_{u} \cap B} |\xi| |\nu_u(x)\cdot \xi|\,  \de \mathcal{H}^{n-1}(x).
    \end{split}
\end{equation*}

We define for every ball $B \subset \Omega$ the function
\[\zeta(u,B):=\left (\int_{\R^n} \left (\int_{B} |e(u)(x) \xi \cdot \xi|^2 \de x +\frac{\pi}{2}\int_{J_u\cap B} |\nu_u(x) \cdot \xi|\,  \de \mathcal{H}^{n-1}(x) \right )^{p}|\xi|^p e^{-|\xi|^2} \, \de \xi\right )^{\frac{1}{p}},\]
and for every open $A \subset \Omega$ we set
\[\mu(u,A):=\sup_{\mathscr{B}_A} \sum_{B \in \mathscr{B}_A} \zeta(u,B),\]
where $\mathscr{B}_A$ is any finite family of disjoint balls contained in $A$. 
Then, $\mu$ can be extended to a Borel measure (which we still denote by $\mu$ with a  slight  abuse of notation). We also observe that there exists a positive constant $C$ only depending on $n$ and $p$ such that for every open set $A \subset \Omega$ we have
\begin{equation}\label{e:dis2}
     \mu(u,A) \le C \mathcal{G}_{\frac{2}{\pi}}(u,A),
\end{equation} 
where the inequality follows by the definition of $\mu$.
In particular, \eqref{e:dis2} holds for every Borel  set. Hence,  we decompose $\mu$ in the following way
\[\mu(u,B)= \int_{B} f(x)\, \de x + \int_{J_u \cap B} g(x) \, \de \mathcal{H}^{n-1}(x)\]
for every Borel set $B \subset \Omega$,  for some densities $f$ and~$g$.

We start by calculating the density $f$, which is given by 
\[f(x)=\lim_{r \to 0} \frac{\mu(u,B_r(x))}{\omega_n r^n}\]
for $\mathcal{L}^n$-a.e.~$x \in \Omega$. Let $x \in \Omega$  be such that
\begin{align}
    \label{e:lebpt}& \lim_{r \to 0}\frac{1}{r^n}\int_{B_r(x)} |e(u)(y)-e(u)(x)|^2 \de y =0, \\
    \label{e:nojumppt}&\lim_{r \to 0}\frac{\mathcal{H}^{n-1}(J_u \cap B_r(x))}{r^n}=0\,.
\end{align} 
We recall that $\mathcal{L}^n$-a.e.~$x \in \Om$ satisfy \eqref{e:lebpt} and~\eqref{e:nojumppt}.  Consider $\mathscr{B}_{B_r(x)}$ a  family of covers of~$B_r(x)$ such that  
\begin{align}
\label{e:zetalim}\lim_{r \to 0} &  \frac{\mu(u,B_r(x)) -\sum_{B \in \mathscr{B}_{B_r(x)}} \zeta(u,B)}{r^{n}} = 0 \,,\\
\label{e:leblim}\lim_{r\to 0} & \frac{\mathcal{L}^n(B_r(x))- \sum_{B \in \mathscr{B}_{B_r(x)}}\mathcal{L}^n (  B ) }{r^{n}} = 0 \,.
\end{align}

In view of \eqref{e:zetalim}, for every $\delta>0$ we have
\begin{align}
    f(x) &= \lim_{r \to 0} \frac{\mu(u,B_r(x))}{\omega_n r^n} = \lim_{r \to 0} \frac{\sum_{B \in \mathscr{B}_{B_r(x)}} \zeta(u,B)}{\omega_n r^n} \nonumber
    \\
    &=\lim_{r \to 0} \frac{1}{\omega_n r^n}\sum_{B \in \mathscr{B}_{B_r(x)}} \Bigg (\int_{\R^n} \Big ( \int_B( |(e(u)(y)-e(u)(x)+e(u)(x))\xi \cdot \xi|^2  \de y \nonumber \\
    &\quad+\frac{\pi}{2}\int_{J_u\cap B} |\nu_u(y) \cdot \xi|\,  \de \mathcal{H}^{n-1}(y) \Big )^{p}|\xi|^p e^{-|\xi|^2} \de \xi\Bigg )^{\frac{1}{p}} \nonumber\\
    &\le \lim_{r \to 0} \frac{1}{\omega_n r^n}\sum_{B \in \mathscr{B}_{B_r(x)}} \Big (\int_{\R^n} (1+\delta)^p |e(u)(x) \xi \cdot \xi|^{2p} \mathcal{L}^n(B)^p |\xi|^p e^{-|\xi|^2} \de \xi \Big )^{\frac{1}{p}}\label{e:finalm}\\
    & \quad + \lim_{r \to 0} \frac{1}{\omega_n r^n}\sum_{B \in \mathscr{B}_{B_r(x)}}\bigg (\int_{\R^n} \Big (\int_B \big(1+\frac{1}{\delta}\big) |e(u)(y)-e(u)(x)|^2  \de y \Big )^p |\xi|^{5p} e^{-|\xi|^2} \de \xi\bigg)^{\frac{1}{p}} \label{e:uf}\\
    &\quad+\lim_{r \to 0} \frac{1}{\omega_n r^n}\sum_{B \in \mathscr{B}_{B_r(x)}}\bigg (\int_{\R^n}\Big (\frac{\pi}{2}\int_{J_u\cap B} |\nu_u(y) \cdot \xi|\,  \de \mathcal{H}^{n-1}(y) \Big )^{p} |\xi|^p e^{-|\xi|^2} \de \xi\bigg )^{\frac{1}{p}}\label{e:ancuf}.
\end{align}

The limit in \eqref{e:uf} rewrites as
\begin{align*}
    \big(1+\frac{1}{\delta}\big)
    \bigg (\int_{\R^n}  |\xi|^{5p} e^{-|\xi|^2} \de \xi\bigg)^{\frac{1}{p}}  \lim_{r \to 0} \frac{1}{\omega_n r^n} \sum_{B \in \mathscr{B}_{B_r(x)}}
    \int_B  |e(u)(y)-e(u)(x)|^2  \de y
\end{align*}
which is equal to zero by \eqref{e:lebpt}.
The limit in \eqref{e:ancuf} is bounded by
\begin{align*}
    \frac{\pi}{2} \bigg (\int_{\R^n}  |\xi|^{2p} e^{-|\xi|^2} \de \xi\bigg )^{\frac{1}{p}} \lim_{r \to 0} \frac{1}{\omega_n r^n}\sum_{B \in \mathscr{B}_{B_r(x)}} \mathcal{H}^{n-1}(J_u \cap B)
\end{align*}
which is also equal to zero by \eqref{e:nojumppt}.
Finally, by \eqref{e:leblim} the limit in \eqref{e:finalm} is equal to
\[(1+\delta)\left (\int_{\R^n} \left ( |e(u)(x) \xi \cdot \xi|^2   \right )^{p}|\xi|^p e^{-|\xi|^2} \de \xi\right )^{\frac{1}{p}}=  (1+\delta)\varphi_{p} \big(e(u)(x)\big)\]
for every $\delta >0$,
and thus $f(x)\le \varphi_{p} \big(e(u)(x)\big)$. The other inequality follows from similar arguments.

In order to calculate the density $g$ we consider $x \in J_u$ such that
\begin{align}
    \label{e:jumppt1}& \lim_{r \to 0}\frac{\int_{B_r(x)}|e(u)(y)\xi \cdot \xi|^2\de y}{r^{n-1}} =0,\\
    \label{e:jumppt2}& \lim_{r \to 0} \frac{\mathcal{H}^{n-1}(J_u \cap B_r(x))}{\omega_{n-1} r^{n-1}} =1,\\
    \label{e:jumppt3}& \lim_{r \to 0}\frac{ 1}{\omega_{n-1} r^{n-1}} \int_{B_r(x) \cap J_u} |\nu_u(y) -\nu_u(x)| \, \de \mathcal{H}^{n-1}(y) = 0.
\end{align}
We remark that $\mathcal{H}^{n-1}$-a.e.~$x \in J_u$ satisfy \eqref{e:jumppt1}--\eqref{e:jumppt3}.
We consider a family of covers $\mathscr{B}_{B_r(x)}$ of $B_r(x)$ such that 
\begin{align}
 \label{e:jumppt4}\lim_{r\to0} & \frac{ \mu(u,B_r(x)) - \sum_{B \in \mathscr{B}_{B_r(x)}} \zeta(u,B)}{r^{n-1}} = 0\,.
\end{align}
Thanks to \eqref{e:jumppt4}  we obtain
\begin{align}
    g(x) &= \lim_{r \to 0} \frac{\mu(u,B_r(x))}{\omega_{n-1} r^{n-1}} = \lim_{r \to 0} \frac{\sum_{B \in \mathscr{B}_{B_r(x)}} \zeta(u,B)}{\omega_{n-1} r^{n-1}} \nonumber
    \\
    &=\lim_{r \to 0} \frac{1}{\omega_{n-1} r^{n-1}}\sum_{B \in \mathscr{B}_{B_r(x)}} \Bigg (\int_{\R^n} \Big ( \frac{\pi}{2}\int_{J_u\cap B} |(\nu_u(y)-\nu_u(x) +\nu_u(x)) \cdot \xi|\,  \de \mathcal{H}^{n-1}(y)
     \nonumber \\
    &\quad + \int_B( |e(u)(y)\xi \cdot \xi|^2  \de y \Big )^{p}|\xi|^p e^{-|\xi|^2} \de \xi\Bigg )^{\frac{1}{p}} \nonumber\\
    &  \le \lim_{r \to 0} \frac{1}{\omega_{n-1} r^{n-1}}\sum_{B \in \mathscr{B}_{B_r(x)}}\bigg (\int_{\R^n}\Big (\frac{\pi}{2}\int_{J_u\cap B} |\nu_u(x) \cdot \xi |\,  \de \mathcal{H}^{n-1}(y) \Big )^{p} |\xi|^p e^{-|\xi|^2} \de \xi\bigg )^{\frac{1}{p}} \label{e:ufufufu}\\
    & \quad +\lim_{r \to 0} \frac{1}{\omega_{n-1} r^{n-1}}\sum_{B \in \mathscr{B}_{B_r(x)}}\bigg (\int_{\R^n}\Big (\frac{\pi}{2}\int_{J_u\cap B} |\nu_u(y)-\nu_u(x) |\,  \de \mathcal{H}^{n-1}(y) \Big )^{p} |\xi|^{2p} e^{-|\xi|^2} \de \xi\bigg )^{\frac{1}{p}}\label{e:ancuf2}\\
    &\quad+
    \lim_{r \to 0} \frac{1}{\omega_{n-1} r^{n-1}}\sum_{B \in \mathscr{B}_{B_r(x)}}\bigg (\int_{\R^n} \Big (\int_B |e(u)(y)|^2  \de y \Big )^p |\xi|^{5p} e^{-|\xi|^2} \de \xi\bigg)^{\frac{1}{p}}. \label{e:uf2}
\end{align}
Similarly as before, the limit in \eqref{e:ancuf2} is equal to zero by \eqref{e:jumppt2} and \eqref{e:jumppt3}.
From \eqref{e:jumppt1} it follows that  also \eqref{e:uf2} is zero. 
Finally, we observe that \eqref{e:ufufufu} is equal to
\[\frac{\pi}{2}  \left (\int_{\R^n} |\nu_u(x) \cdot \xi|^{p}|\xi|^p e^{-|\xi|^2} \de \xi\right )^{\frac{1}{p}}=:\beta_{p},\]
hence $g(x) \le \beta_p$. The reverse inequality follows from similar computations.

\end{proof}

\subsection{Proof of Theorem~\ref{compattezza}}


Consider any subsequence $\varepsilon_k \to 0$ and set $u_k:=u_{\varepsilon_k}$. By H\"older's inequality, we have
\[M:= \sup_{k \in \mathbb{N}} \mathcal{F}^1_{\varepsilon_{k}} (u_{k}, \Omega)\le C(n,p) \sup_{k \in \mathbb{N}} \mathcal{F}^p_{\varepsilon_{k}} (u_{k}, \Omega) <\infty.\]
We divide the proof into 6 steps. In Steps 1 and 2 we prove that $u_{k}$ converges pointwise, up to an exceptional set $A$, to some limit function $u$ by a Fr\'echet-Kolmogorov argument. Steps 3--6 are devoted to show that $u$ is indeed a $\text{GSBV}^{\mathcal{E}}$-function and that~$A$ is of finite perimeter.

\noindent \textbf{Step 1:}  
Fix two open sets $F \Subset E \Subset \Omega$ and define $f_{k}\colon \Omega \times \R^n \to \R$ by
\[
f_{k}(x,\xi):=\tau(u_{k}(x)\cdot \xi):=\arctan(u_{k}(x)\cdot \xi).
\]
Let $k \in \N$ be sufficiently large so that $ B_4(0)\setminus B_{1/4}(0) \subset \frac{\Omega-\Omega}{\varepsilon_k}$ (notice that $\Omega - \Omega$ is an open set containing the origin)  and $4\varepsilon_{k} \leq {\rm dist}(E, \partial \Om)$. In particular, this implies that  $F \subset E \subset \Omega -\varepsilon_k t \xi$ for every $\xi \in  B_{4}(0)\setminus B_{1/4}(0) $ and every $t \in [0, 1]$.  For simplicity of notation, let us set $R := B_{2}(0) \setminus B_{1/2}(0)$.  In order to apply Fr\'echet-Kolmogorov Theorem on $F \times R$, we start by showing that for every $\alpha >0$ there exist $\bar t\in(0,1)$ such that
\begin{equation}\label{equicont}
    \int_{F \times R} |f_{k}(x+ \eta,\xi)-f_{k}(x,\xi)|\, \de x \, \de \xi \le \alpha \qquad \text{for $\eta \in B_{\bar t}(0)$.}
\end{equation}

 Instead of proving \eqref{equicont}, we prove a slightly different equivalent statement: for every $\alpha >0$ there exist $\bar t\in(0,1)$ and $\bar k \in \N$ such that
\begin{equation}\label{equicont72}
    \int_{F \times R} |f_{k}(x+ \eta,\xi)-f_{k}(x,\xi)|\, \de x \, \de \xi \le \alpha \qquad \text{for $\eta \in B_{\bar t}(0)$ and $k \geq \overline{k}$.}
\end{equation}

For every $\eta \in B_1(0)$, $\xi \in \R^n$ and $j =1,2,\dotsc$, we define $\xi_\eta^j \in \R^n$ as 
\begin{equation*}
    \xi_\eta^j:= \xi+ \frac{1}{j} \eta.
\end{equation*}
We remark that 
by Lemma~\ref{l:fundamentalest} for every $\eta \in B_1(0)$ and $j \in \N$ we have that  
\begin{align}
\label{e:nstima8}
&\sup_{e \in \mathbb{R}^n}\int_{R} \frac{|e|}{1+|e\cdot \xi_\eta^j|^2} \, \de\xi 
\leq  \sup_{e \in \mathbb{R}^n }\int_{B_4(0)\setminus B_{1/4}(0)} \frac{|e|}{1+|e \cdot \xi|^2}  \, \de\xi  \le C
\end{align}
for some positive constant $C$ only depending on the dimension $n$.

Let us now fix  $\overline{j} \ge 4$ such that $\overline{j} \ge 4|F|C/\alpha$.
We repeatedly apply the triangle inequality to obtain  for $t \in [0, 1]$ 
\begin{align}
\label{291}
     &\left |f_{k}(x+ t \eta , \xi ) - f_{k}( x , \xi ) \right | \\
     &\quad \le |f_{k}( x + t \eta , \xi )- f_{k}( x + t \eta ,  \xi_\eta^{\overline{j}} )|+ |f_{k}( x + t \eta ,  \xi_\eta^{\overline{j}}  ) - f_{k} (x -  \overline{j}  t \xi,  \xi_\eta^{\overline{j}} )| \nonumber\\
     &\qquad +|f_{k}( x -  \overline{j}  t\xi, \xi_\eta^{\overline{j}}  ) - f_{k}( x - \overline{j}  t \xi , \xi ) | + | f_{k} ( x -  \overline{j}t \xi , \xi ) - f_{k} ( x , \xi ) |. \nonumber
\end{align}

The first term in \eqref{291} is bounded as follows
\begin{align}
\label{stima xi}
   &   |f_{k}( x + t \eta , \xi )- f_{k}( x + t \eta , \xi_\eta^{\overline{j}} )|
    \\
    &
    \qquad = |\tau(u_k(x + t \eta ) \cdot  \xi  ) - \tau ( u_k( x + t \eta ) \cdot  \xi_\eta^{\overline{j}}   ) | = \left |\int_{u_k ( x + t \eta )\cdot \xi}^{u_k( x + t \eta )\cdot   \xi_\eta^{\overline{j}}   } \frac{\de s}{1+s^2} \right | \nonumber\\
    &\qquad\le  \max \Big \{ \frac{1}{1+|u_k(x+t\eta)\cdot \xi|^2},\frac{1}{1+|u_k(x+t\eta)\cdot  \xi_\eta^{\overline{j}}   |^2}  \Big \} \Big |u_k(x+t\eta)\cdot (  \xi_\eta^{\overline{j}}   -  \xi  ) \Big |\nonumber \\
     &\qquad \le \frac{1}{\overline{j}} \max \Big \{ \frac{1}{1+|u_k(x+t\eta)\cdot \xi|^2},\frac{1}{1+|u_k(x+t\eta)\cdot \xi_\eta^{\overline{j}}|^2}  \Big \} |u_k(x+t\eta)|. \nonumber
\end{align}
 Thanks to~\eqref{e:nstima8}, we deduce from~\eqref{stima xi} that
\begin{equation}
\label{e:nstima2}
\begin{split}
\int_{R} |f_{k}( x + t \eta , \xi )- f_{k}( x + t \eta , \xi_\eta^{\overline{j}} )| \, \de \xi
\le \frac{1}{\overline{j}}C  \, . 
\end{split}
\end{equation}
for the same constant~$C$ defined in~\eqref{e:nstima8}.
Hence, we infer from~\eqref{e:nstima2} and from the choice of~$\overline{j}$ that for every $x \in F$,  $t \in [0, 1]$, $k \in \mathbb{N}$, and $\eta \in B_1(0)$, it holds
 \begin{equation}
     \label{e:nstima3}
    \int_{R} | f_{k}(x + t\eta, \xi) - f_{k} (x + t\eta, \xi^{\overline{j}}_{\eta})  | \, \de \xi \leq \frac{\alpha}{4|F|}.
 \end{equation}
 The very same argument allows us to bound the third term  on the right-hand side of~\eqref{291}  for every $x \in F$,  $t \in [0, 1]$, $k \in \mathbb{N}$, and $\eta \in B_1(0)$
 as 
 \begin{equation}
     \label{e:nstima4}
    \int_{R}  | f_{k} ( x -\overline{j}  t \xi ,  \xi_\eta^{\overline{j}} ) - f_{k} ( x -  \overline{j} t \xi ,   \xi )|  \, \de \xi \leq \frac{\alpha}{4|F|}.
 \end{equation}

For the second term  on the right-hand side of~\eqref{291}  we have
\begin{align*}
    &\int_{R}  \int_{F}| f_{k} ( x + t \eta ,  \xi_\eta^{\overline{j}}  ) - f_{k} ( x -  \overline{j} t \xi ,  \xi_\eta^{\overline{j}}  ) | \de x \, \de \xi\\
    &= \int_{R}  \int_{F -  \overline{j}  t\xi}|f_{k} ( x + t  \overline{j} \xi_\eta^{\overline{j}} , \xi_\eta^{\overline{j}}  ) -f_{k} ( x ,  \xi_\eta^{\overline{j} } ) | \de x \,\de \xi\\
    &\le\int_{E \times R} |\tau(u_{k} ( x + t  \overline{j} \xi_\eta^{\overline{j}}  )\cdot   \xi_\eta^{\overline{j}}   ) -\tau (u_{k} (x) \cdot   \xi_\eta^{\overline{j}}   ) | \de x \, \de \xi,
\end{align*}
if $ \overline{j}  t \ll 1$,  since $F \Subset E$. We apply the change of variable  $\xi = \xi^{\overline{j}}_{\eta}$  to the last integral above to obtain 
\begin{align*}
 \int_{R}  \int_{F} &| f_{k} ( x + t \eta ,  \xi_\eta^{\overline{j}}   ) - f_{k} ( x -   \overline{j} t \xi ,   \xi_\eta^{\overline{j}}   ) | \de x \, \de \xi\\
 &\leq \int_{E \times (B_{4}(0)\setminus B_{1/4}(0))} |\tau(u_{k}( x + t   \overline{j}   \xi ) \cdot  \xi  ) - \tau ( u_{k} (x) \cdot \xi ) |  \de x \, \de \xi\,.
\end{align*}

Recall that, by Lemma~\ref{lemmaG5.1}, for $t \in [0,1)$ and $\xi \in B_{4}(0)\setminus B_{1/4}(0)$ such that $E + t \overline{j} \xi \subset \Om$ we have
\begin{equation}\label{6555}
    \int_{E} |\tau(u_{k}(x + t   \overline{j}   \xi ) \cdot  \xi ) - \tau ( u_{k}(x) \cdot \xi ) | \de x \le C_{E} t  \overline{j}   (1 + F_{ t   \overline{j}  ,\xi}(u_k,  E) ) .
\end{equation}
Choose  $t_{\alpha} \in (0, 1) $ sufficiently small  so that 
\begin{align}
     & \label{e:one-inclusion} F \subset E + \overline{j} t \xi \subset \Om \qquad \text{for every $\xi \in  (B_{4}(0)\setminus B_{1/4}(0))$  and every $t \in [0, t_{\alpha}]$,}\\
     & C_E    \bar j t_{\alpha} (1+M)\le \frac{\alpha}{4}\nonumber.
\end{align} 
Let $ \bar k\in \N$ sufficiently large  and let us set $t_k:=  i_k \varepsilon_k \in (\frac{t_{\alpha}}{2} , t_{\alpha})$
for $k \ge \bar k$ and for some $i_k\in \N$. Thanks to~\eqref{e:one-inclusion}, we apply~\eqref{6555} for $t = t_{k}$ and Lemma~\ref{Gob5.3} with the choice $m = \overline{j} i_{k}$, obtaining 
\begin{align*}
    \int_{E  \times  (B_{4}(0)\setminus B_{1/4}(0)) } &|\tau(u_{k}(x +  \overline{j}  t_k \xi ) \cdot  \xi  ) - \tau ( u_{k} (x) \cdot  \xi  ) | \de x \,\de \xi\\
    &=\int_{E  \times (B_{4}(0)\setminus B_{1/4}(0)) } | \tau ( u_{k} ( x +   \overline{j}   i_k \varepsilon_k\xi)\cdot \xi) - \tau ( u_{k}(x) \cdot \xi)| \de x \,\de \xi\\
    &\le C_{E}   \overline{j}   i_k\varepsilon_k \int_{  (B_{4}(0)\setminus B_{1/4}(0)) } ( 1 + F_{   \overline{j}   i_k \varepsilon_k,\xi}(u_k,  E )) \de \xi
    \\
    & \vphantom{\int_{E}}\le C_E   \overline{j}   i_k \varepsilon_k(1+\mathcal{F}^1_{\varepsilon_k}(u_k,  \Omega)).
\end{align*}
Summarizing we have thus obtained for every $k \ge \bar k$  and every $\eta \in B_{1}(0)$ 
\begin{align}
    \label{e:nstima5}
    \int_{F \times R}  |f_{k}(x + t_k \eta ,   \xi_\eta^{\overline{j}} ) - f_{k}(x-   \overline{j}  t_k\xi,  \xi_\eta^{\overline{j}}  ) | \de x \, \de \xi & \leq  C_E    \overline{j}  i_k \varepsilon_k(1+\mathcal{F}^1_{\varepsilon_k}(u_k,  \Omega))
    \\
    &
     \leq C_E\overline{j}  t_{\alpha} ( 1 + M) \leq  \frac{\alpha}{4}. \nonumber
\end{align}
From the very same argument, we infer that for every $k \ge \bar k$  and every $\eta \in B_{1}(0)$ 
\begin{equation}
    \label{e:nstima6}
    \int_{F \times R}  |f_{k}(x -   \overline{j}   t_k\xi,\xi) - f_{k} ( x , \xi ) | \de x \, \de \xi \leq C_E   \overline{j}   i_k\varepsilon_k(1+\mathcal{F}^1_{\varepsilon_k}(u_k,  \Omega)) \leq \frac{\alpha}{4} \,.
\end{equation}

Combining~\eqref{e:nstima3}, \eqref{e:nstima4}, \eqref{e:nstima5}, \eqref{e:nstima6}, we have shown that for every $k \geq \bar{k}$ and every $\eta \in B_{1}(0)$ it holds
\begin{equation}
\label{e:interfk}
    \int_{F \times R } f_{k} (x + t_{k} \eta, \xi) - f(x , \xi) \, \di x \, \di \xi \leq \alpha\,,
\end{equation}
where we recall that $t_{k} \in (\frac{t_{\alpha}}{2}, t_{\alpha})$. Finally, setting $\bar t := t_{\alpha}/2$, \eqref{e:interfk} yields 
\begin{equation*}
    \int_{R}  \int_{F}|f_{k}(x+\eta,\xi)-f_{k}(x,\xi)| \de x \, \de  \xi  \leq \alpha, 
\end{equation*}
for every $k \ge \overline{k}$ and every $\eta \in B_{\overline{t}} (0)$, which is precisely \eqref{equicont72}.

\noindent\textbf{Step 2:}  Now we prove that for every $\alpha >0$ there  exists $\bar t \in (0, 1)$  such that
\begin{equation}\label{equicont2}
    \int_{F \times R} |f_{k}(x,\xi + \eta)-f_{k}(x,\xi)| \,\de x \,\de  \xi  \le \alpha \qquad \text{for $\eta \in B_{\bar t}(0)$ and $k \in \mathbb{N}$.}
\end{equation}
Fix $\eta \in \Ss^{n-1}$ and let $t \in (0,1).$
By arguing as in \eqref{stima xi} we have that for $x \in F$
\begin{align*}
   & |\tau(u_k(x) \cdot (\xi +t\eta)) - \tau(u_k(x) \cdot \xi)| \\
    &\leq t \, \max \Big \{ \frac{1}{1+|u_k(x)\cdot \xi|^2},\frac{1}{1+|u_k(x)\cdot (\xi + t \eta)|^2}  \Big \} |u_k(x) \cdot\eta|.
\end{align*}
From this last inequality  we deduce from Lemma~\ref{l:fundamentalest} that
\begin{equation}
\label{e:nstima10}
    \int_{R}|\tau(u_k(x) \cdot (\xi +t\eta)) - \tau(u_k(x) \cdot \xi)|\,\de  \xi  \leq C t\,,
\end{equation}
for a positive constant $C$ only depending on the dimension~$n$.  
By \eqref{e:nstima10} there exists $\bar{t} \in (0, 1)$ such that for every $x \in F$, $k \in \mathbb{N}$, $\eta \in \mathbb{S}^{n-1}$, and every $t \in [0, \bar{t}]$ 
\begin{equation}
\label{e:nstima12}
    \int_{R} |f_{k} (x, \xi + t\eta)  - f_{k} (x, \xi) | \, \de  \xi  \leq \alpha.
\end{equation}
 Eventually, \eqref{e:nstima12} leads to~\eqref{equicont2}.

 Thanks to~\eqref{equicont} and~\eqref{equicont2}, it is not difficult to see that we are in position to apply  Fr\'echet-Kolmogorov Theorem (see, e.g.,~\cite[Theorem~4.26]{Brezis})  and obtain that the sequence $\{f_k\}_{k \in \N}$ is relatively compact in  $L^1(F \times R)$  for every open set $F$ compactly contained in $\Omega$. Therefore, up to passing to a subsequence, we have $f_k \to f_{\infty}$ as $k \to \infty$ strongly in  $L_{loc}^1(\Omega \times R)$. By a diagonal argument we can possibly pass to another subsequence such that 
 \begin{equation}
 \label{e:pointconv1}
 f_k \to f_{\infty} \qquad \text{pointwise a.e.~in $\Omega \times R$}. 
\end{equation}
 From \eqref{e:pointconv1},  we deduce the existence of an orthonormal basis $\{\eta_1,\ldots,\eta_n\}$ of $\R^n$ such that 
\begin{equation*}
 \tau(u_k \cdot \eta_i) \to f^i_\infty \qquad \text{pointwise a.e.~in $\Omega$ for every $i=1,\dotsc,n$ as $k \to \infty$},
\end{equation*}
for some measurable function $f^i_\infty \colon \Omega \to [-\pi/2,\pi/2]$. Since $\arctan$ is a diffeomorphism with its image, we deduce the existence of a measurable function $u\colon \Omega \to \R^n$ such that  $ u=0$ on~$A$ and
\begin{align}
    \label{e:pointconv5}
    u_k \to u \qquad  &\text{a.e.~in $\Omega \setminus A$ as $k \to \infty$}\\
     \label{e:pointconv6}
    |u_k| \to \infty \qquad   &\text{a.e.~in $A$ as $k \to \infty$},
\end{align}
where  $A$ is the set defined in~\eqref{e:setA}. Eventually, up to passing to a subsequence, we also assume that $u_k \to u$ in measure in $\Omega \setminus A$.  We further remark that
\begin{equation}\label{9189}
    x \in A \iff \lim_{k \to  \infty } |u_k(x)\cdot \xi|=\infty \quad \text{for a.e. } \xi \in \R^n.
\end{equation} 

\noindent \textbf{Step 3:}
Let $\lambda >0 $ and let $\tau_\lambda\colon \R \to [-\lambda,\lambda]$ be a strictly increasing function such that  $\tau'_\lambda \leq 1$, $ \lim_{t \to \pm \infty} \tau_\lambda(t)=\pm \lambda$, and $\tau_{\lambda}(t)=t$ for $t \in [-\lambda/2,\lambda/2]$.
Fix $\xi \in \R^n$,  and let $y \in \Pi^{\xi}$. For every $\lambda>0$, we want to construct suitable modifications $\{v_{j,\lambda}\}_{j \in \N} \subset \text{SBV}(\Omega_y^\xi)$ of $(\hat u_{k})_{y,\lambda}^{\xi}(t):=\tau_\lambda(u_k(y+t \xi)\cdot \xi)$, as in~\cite[Theorem~3.4, Step $2$]{Gobbino},  such that
\begin{equation}\label{3.27G}
    v_{j,\lambda} \to \hat u_{y,\lambda}^{\xi} \text{ in } L^1_{loc}(\Omega_y^\xi)\text{ as } j \to \infty ,  
\end{equation}
and
\begin{equation}\label{3.28G}
    \liminf_{k \to  \infty } F_{\varepsilon_k}\big((\hat u_{k})_{y,\lambda}^{\xi},I \cap (I-\varepsilon_k)\big ) \ge \frac{\pi}{2}\mathcal{MS}_{\frac{2}{\pi}}(v_{j,\lambda},I) 
\end{equation}
 for every $j$ sufficiently large and every interval $I \Subset \Omega_y^\xi$,
where $\hat u_{y,\lambda}^{\xi}(t):=\tau_\lambda(u(y+t\xi)\cdot \xi)$ for $y+t\xi \in \Omega \setminus A$ and $\hat u_{y,\lambda}^{\xi}(t) = \pm \lambda$ for $y+t \xi \in A$. 

We notice that by applying Fubini's Theorem for a.e. $\xi$ we have
\[\mathcal{L}^n(\{x \in A: \liminf_{k \to \infty} |u_k(x)\cdot \xi|<\infty\})=0.\]
Then thanks to \eqref{e:pointconv5}--\eqref{e:pointconv6}, up to a subsequence which does not depend on $\xi$ and $y$ we have (for every $\lambda >0$)
\begin{align}
\label{e:pointconv7}
    &(\hat u_{k})_{y,\lambda}^{\xi} \to \hat u_{y,\lambda}^{\xi} \qquad \text{in $L^1_{loc}((\Omega \setminus A)_y^\xi)$ for every $\xi$ and for $\mathcal{H}^n$-a.e.~$y \in \Pi^\xi$},\\
 \label{e:pointconv8}   
    &|(\hat u_{k})_{y,\lambda}^{\xi}| \to |\hat u_{y,\lambda}^{\xi}| =\lambda \qquad  \text{in $L^1_{loc}(A_y^\xi)$ for a.e.~$\xi$, for $\mathcal{H}^{n-1}$-a.e.~$y \in \Pi^\xi$,}
\end{align}
as $k \to \infty$. We further observe that, since $0  \leq  \tau'_\lambda \leq 1$, then $|\tau_\lambda(t)- \tau_\lambda(t')| \leq |t -t'|$ for every $t,t' \in \mathbb{R}$, leading to
\begin{equation}
\label{e:monotonicitylambda}
    F_\varepsilon(f,B) \geq F_\varepsilon(\tau_\lambda(f),B), 
\end{equation}
for every $\varepsilon >0$, every measurable set $B \subset \mathbb{R}$, and every measurable function $f \colon B \to \mathbb{R}$.

We extend the function $\hat u^\xi_{y,\lambda}$ to $\R$ in such a way that $\hat u^\xi_{y,\lambda}(t)=0$ for $t \in \R \setminus \Omega_y^\xi$.
Let $a \in \R$  satisfy  conditions $(i)$ and $(ii)$ of Lemma~\ref{Lemma3.3} for $\hat u^\xi_{y,\lambda}$, and let $I_j^z:=[a+\frac{z}{j},a+\frac{z+1}{j}]\Subset \R.$ We define $v_{j,\lambda}$ in every interval $I_j^z$ in the following way: 
\begin{itemize}
    \item If $j(\hat u_{y,\lambda}^{\xi} (a+\frac{z+1}{j})-\hat u_{y,\lambda}^{\xi}(a+\frac{z}{j}))^2\le \frac{\pi}{2},$ then $v_{j,\lambda}$ is the affine function that coincides with $\hat u_{y,\lambda}^{\xi}$ at the endpoints of $I_j^z;$
    \item if $j(\hat u_{y,\lambda}^{\xi}(a+\frac{z+1}{j})-\hat u_{y,\lambda}^{\xi}(a+\frac{z}{j}))^2> \frac{\pi}{2},$ then $v_{j,\lambda}$ is the piecewise constant function that coincides with $\hat u_{y,\lambda}^{\xi}$ at the endpoints of $I_j^z$ and has a unique discontinuity in the middle  point of the interval.
\end{itemize}
Then by Lemma~\ref{Lemma3.3}, up to subsequences, \eqref{3.27G} holds. Moreover, $v_{j,\lambda} \in \text{SBV}(\Omega_y^\xi)$ and
\begin{equation*}
    \frac{\pi}{2} \mathcal{MS}_{\frac{2}{\pi}}(v_{j;\lambda},I_j^z)=\min \left \{ \frac{\pi}{2},j \left ( \hat u_{y,\lambda}^{\xi}\left ( a+\frac{z+1}{j}\right )-\hat u_{y,\lambda}^{\xi}\left ( a+\frac{z}{j}\right )\right )^2 \right \}
\end{equation*}
for every $j \in \N$ and $z \in \Z$ such that $I_j^z \Subset \Omega_y^\xi$. 

Let $I \subset \Omega_y^\xi$ be an interval and $F \Subset I \cap (I-\varepsilon_k)$ for $k$ sufficiently large, note that such $F$ exists since $I \cap (I -\varepsilon_k) \to I$ as $k \to \infty$.
 We notice that for every $j$ sufficiently large, depending on $F$, every intervals $I_j^z$ such that $I_j^z \subset I \cap (I-\varepsilon_k)$ also satisfy $F \subset \cup_{z} I_j^z$. By virtue of \eqref{e:pointconv7} and \eqref{e:pointconv8} we can make use of Lemma~\ref{Lemma3.2} in $I_j^z$ and infer, for a.e. $\xi \in \R^n$ and for $\mathcal{H}^{n-1}$-a.e. $y \in \Pi^\xi$, that
\begin{equation*}
    \liminf_{k \to  \infty } F_{\varepsilon_k}((\hat u_{k})_{y,\lambda}^{\xi},I_j^z) \ge \frac{\pi}{2} \mathcal{MS}_{\frac{2}{\pi}}(v_{j,\lambda},I_j^z).
\end{equation*}
To prove \eqref{3.28G} we sum over all $z$ such that $I_j^z \subset I \cap (I-\varepsilon_k)$ for every $j$ sufficiently large 
\begin{equation*}
    \liminf_{k \to  \infty } F_{\varepsilon_k}((\hat u_{k})_{y,\lambda}^{\xi},I \cap (I-\varepsilon_k)) \ge \frac{\pi}{2}\mathcal{MS}_{\frac{2}{\pi}}(v_{j,\lambda},F).
\end{equation*}
From the semicontinuity of the Mumford-Shah functional with respect to the $L_{loc}^1$-convergence, taking the limit as $j \to \infty$ and then as $F \nearrow I$, we obtain 
\begin{equation}\label{semitilde}
    \liminf_{k \to  \infty } F_{\varepsilon_k}\big((\hat u_{k})_{y,\lambda}^{\xi},I \cap (I- \varepsilon_k)\big) \ge \frac{\pi}{2} \mathcal{MS}_{\frac{2}{\pi}}(\hat u_{y,\lambda}^{\xi},I).
\end{equation}

We recall that
\begin{align*}
\mathcal{F}^1_{\varepsilon}(u,\Omega) & = \sup_{\mathscr{B}}\sum_{B \in \mathscr{B}}\int_{\frac{\Omega -\Omega}{\varepsilon}} F_{\varepsilon,\xi}(u,B) e^{-|\xi|^2} \de \xi
\\
&
=\sup_{\mathscr{B}}\sum_{B \in \mathscr{B}}\int_{\frac{\Omega-\Omega}{\varepsilon}} \left (\int_{\Pi^\xi}F_{\varepsilon}(\hat u_y^\xi,B_y^\xi\cap (B-\varepsilon\xi)_y^\xi) \de \mathcal{H}^{n-1}(y) \right ) |\xi| e^{-|\xi|^2} \de \xi.
\end{align*}
From \eqref{e:monotonicitylambda},  Fatou's Lemma, and \eqref{energia_limitata}, we get
\begin{equation*}
\begin{split}
    \sup_{\mathscr{B}}&\sum_{B \in \mathscr{B}}\int_{\R^n} \int_{\Pi^\xi}  \liminf_{k \to  \infty } F_{\varepsilon_k}((\hat{u}_k)_{y,\lambda}^\xi,B_y^\xi\cap (B-\varepsilon_k\xi)_y^\xi) |\xi| e^{-|\xi|^2}\de \mathcal{H}^{n-1}(y)\, \de \xi
    \\
     &
     \le \sup_{\mathscr{B}}\sum_{B \in \mathscr{B}}\int_{\R^n} \int_{\Pi^\xi} \liminf_{k \to  \infty } F_{\varepsilon_k}((\hat{u}_k)_{y}^\xi,B_y^\xi\cap (B-\varepsilon_k\xi)_y^\xi)|\xi| e^{-|\xi|^2} \de \mathcal{H}^{n-1}(y) \,\de \xi
     \\
     &
     \le\liminf_{k \to  \infty } \sup_{\mathscr{B}}\sum_{B \in \mathscr{B}} \int_{\frac{\Omega -\Omega}{\varepsilon_k}} \int_{\Pi^\xi} F_{\varepsilon_k}((\hat{u}_k)_{y}^\xi,B_y^\xi\cap (B-\varepsilon_k\xi)_y^\xi) |\xi| e^{-|\xi|^2}\de \mathcal{H}^{n-1}(y)\, \de \xi 
     \\
    &
    \le  \liminf_{k \to \infty}\mathcal{F}^1_{\varepsilon_k}(u_k,\Omega) \le M,
\end{split}
\end{equation*}
which yields for every open ball $B \subset \Omega$
\begin{equation}\label{bound1}
    \!\!\! \liminf_{k \to  \infty } F_{\varepsilon_k}((\hat{u}_k)_{y,\lambda}^\xi,B_y^\xi\cap (B-\varepsilon_k\xi)_y^\xi) \le C \quad \text{for a.e. } \xi \in \R^n, \text{  for $\mathcal{H}^{n-1}$-a.e.~$y \in \Pi^\xi$}
\end{equation}
where the constant $C>0$ may depend on $\xi$ and $y$ but not on $\lambda$ and $B$.
The above inequality combined with \eqref{semitilde} implies that $\hat u_{y,\lambda}^\xi \in \text{SBV}_{loc}(\Omega_y^\xi)$ for a.e. $\xi \in \R^n$ and $\mathcal{H}^{n-1}$-a.e. $y \in \Pi^\xi$. 

\noindent\textbf{Step 4:}
We claim that for a.e.~$\xi \in \R^n$, for $\mathcal{H}^{n-1}$-a.e.~$y \in \Pi^\xi$, and for every bounded interval $I \subset \Omega_y^\xi$ we have
\begin{equation}
\label{3831}
   \mathcal{H}^0(A_y^\xi\cap I) \ne 0 \implies \mathcal{H}^0(J_{\hat u_{y,\lambda}^{\xi}}\cap I) \ne 0 \text{ or } I \subset A_y^\xi.
\end{equation}

If $I\subset A_y^\xi$ we  have nothing to prove.  Let us assume that there exists  $s \in I \setminus A_y^\xi.$ By contradiction, assume $\mathcal{H}^0(J_{\hat u_{y,\lambda}^{\xi}}\cap I)=0.$ For a.e.~$\xi \in \R^n$, for $\mathcal{H}^{n-1}$-a.e.~$y \in \Pi^\xi$, by \eqref{semitilde} and \eqref{bound1} we get
\begin{equation}
\label{4163}
    \int_{I} |\nabla \hat u_{y,\lambda}^{\xi}(t)|^2 \, \de t \le C .
\end{equation}
Since $\hat u_{y,\lambda}^{\xi}$ belongs to $ \text{SBV}(I)$ and since $\mathcal{H}^0(J_{\hat u_{y,\lambda}^{\xi}}\cap I)=0$,  it is  absolutely continuous in~$I$. Hence,  for $\bar t \in I$  we have
\begin{align*}
    |\hat u_{y,\lambda}^{\xi}(\bar t)|&=\left |\hat u_{y,\lambda}^{\xi}(s)+\int^t_s \nabla \hat u_{y,\lambda}^{\xi}(t) \, \de t \right |\\
    &\le  |\hat u_{y,\lambda}^{\xi}(s)|+  \bigg| \int^t_s  |\nabla \hat u_{y,\lambda}^{\xi}(t)| \,\de t \bigg|  \\
    &\le  |\hat u_{y,\lambda}^{\xi}(s)|+\text{lenght}(I)^{1/2}  \left |\int^t_s  |\nabla \hat u_{y,\lambda}^{\xi}(t)|^2 \,\de t \right|^{1/2} \le C,
\end{align*}
for every $\bar t \in I$, where the last inequality follows from \eqref{4163} and the fact that $s \in I \setminus A_y^\xi$. 
Since $\mathcal{H}^0(A_y^\xi\cap I) \ne  0 $ and \eqref{9189} holds, there exists $\bar t \in A_y^\xi\cap I $ such that $|\hat u_{y,\lambda}^{\xi}(\bar t)|>C$ for every $\lambda >0$ sufficiently large, which is a contradiction. This concludes the proof of~\eqref{3831}.

\noindent\textbf{Step 5:}
Let $I \subset \Omega_y^\xi$ be a bounded interval.
We claim that, for a.e. $\xi \in \R^n$ and for $\mathcal{H}^{n-1}$-a.e. $y \in \Pi^\xi$, the set $A_y^\xi$ is a finite union of intervals and
\begin{equation}\label{4231}
    \frac{\pi}{2}\mathcal{H}^0(\partial^* A_y^\xi \cap I)\le \liminf_{k \to  \infty } F_{\varepsilon_k}((\hat{u}_k)_{y,\lambda}^\xi,I\cap (I-\varepsilon_k)).
\end{equation}
Moreover, $A$ is a set of finite perimeter.

Denote by $(A_y^\xi)^d$ the points of density $d$ of $A_y^\xi.$ We want to prove that $(A_y^\xi)^1$ is an open set. By contradiction, suppose that there exist $t \in (A_y^\xi)^1$ and a sequence $\{s_j\}_{j \in \N} \subset \Omega_y^\xi \setminus (A_y^\xi)^1$ such that $s_j \to t.$ Since almost every points of $\Omega_y^\xi \setminus (A_y^\xi)^1$ belong to $(A_y^\xi)^0$, we find another sequence $\{t_j\}_{j \in \N} \subset (A_y^\xi)^0$ such that $t_j   \to t$. Without loss of generality we  assume $t^0_j< t^0_{j+1}$ for every $j \in \N.$ By definition of points of density zero, for every $j$, there exists a point $t^1_j \in [t^0_j,t^0_{j+1})$ of density one. By the previous step, there exists another increasing sequence $t_j \in [t^0_j,t^0_{j+1}) \cap J_{\hat{u}_{y,\lambda}^{\xi}}$.
Since the sequence $\{t_j\}_{j \in \N}$ is strictly increasing, there exist disjoint open intervals $I_j$ such that $t_j \in I_j$. 
Now, by applying \eqref{semitilde} and \eqref{bound1} on every $I_j$  we obtain  $\sum_j 1\le \sum_j \mathcal{H}^0(J_{\hat{u}_{y,\lambda}^{\xi}} \cap I_j)\le C,$ which is a contradiction. Hence, $(A^{\xi}_{y})^{1}$ is open.

From now on we assume that $A_y^\xi$ coincides with $(A_y^\xi)^1$.
Suppose that $|I \setminus A_y^{\xi}|>0$, otherwise there is nothing to prove. Let $K$ be a connected component of $A_y^\xi \cap I$, and let $t_1 \in K$, $t_2 \in I \setminus A_y^\xi.$ Then, by Step 4, there exists $t \in J_{\hat u_{y,\lambda}^{\xi}} \cap I$ between $t_1$ and $t_2$.
Using a similar argument as above,  we prove that $\mathcal{H}^0(J_{\hat u_{y,\lambda}^{\xi}} \cap I)$ is greater than or equal to the number of connected components of~$A_y^\xi \cap I$ and is uniformly bounded, from which it follows that~$A_y^\xi$ is the union of a finite number of intervals.
Finally,~\eqref{4231} is a consequence of $\mathcal{H}^0(\partial^*A_y^\xi \cap I)\le \mathcal{H}^0(J_{\hat{u}_{y,\lambda}^\xi} \cap I)$  and of~\eqref{semitilde}.

To conclude we need to prove that $A$ is of finite perimeter.
Multiplying \eqref{4231} by $|\xi| e^{-|\xi|^2}$ and then integrating in $\xi \in \R^n$ and $y \in \Pi^\xi$ we obtain for every ball $B \subset \Omega$
\begin{align*}
C(n) \mathcal{H}^{n-1}(\partial^* A \cap B) &\le \int_{\R^n} \Big ( \int_{\Pi^\xi} \mathcal{H}^0(\partial^* A_y^\xi \cap B_y^\xi) \de \mathcal{H}^{n-1}(y) \Big ) |\xi| e^{-|\xi|^2}\de \xi \\
&\le \int_{\R^n} \Big (\int_{\Pi^\xi} \liminf_{k \to \infty} F_{\varepsilon_k}((\hat u_k)_{y,\lambda}^\xi,B_y^\xi \cap B_y^\xi-\varepsilon_k) \de \mathcal{H}^{n-1}(y) \Big ) |\xi| e^{-|\xi|^2} \de \xi \\
&\le \liminf_{k \to \infty} \mathcal{F}^1_{\varepsilon_k}(u_k,B) ,
\end{align*}
where the last inequality follows from Fatou's Lemma. This proves in particular that $A$ has locally finite perimeter in $\Omega$. By Besicovitch covering Theorem there exist a dimensional constant $j(n)$ and $\mathcal{B}_1, \ldots, \mathcal{B}_{j(n)}$ countable families of pairwise disjoint open balls contained in $\Omega$, such that $\Omega \subset \cup_{i=1}^{j(n)} \cup_{B \in \mathcal{B}_i} B$. 
\begin{align*}
C(n)\mathcal{H}^{n-1}(\partial^* A) &\le \sum_{i=1}^{j(n)} \sum_{B \in \mathcal{B}_i} C(n) \mathcal{H}^{n-1}(\partial^* A \cap B) \\
&\le \sum_{i=1}^{j(n)} \sum_{B \in \mathcal{B}_i}\liminf_{k \to \infty} \mathcal{F}^1_{\varepsilon_k}(u_k,B) \\
&\le j(n) \liminf_{k \to \infty} \sum_{B \in \mathcal{B}_i}\mathcal{F}^1_{\varepsilon_k}(u_k,B) \\
&\leq j(n) \liminf_{k \to \infty}\mathcal{F}^1_{\varepsilon_k}(u_k,\Om) 
<\infty,
\end{align*}
where, in the last but one inequality above, we used Remark \ref{r:finnum}.

\noindent\textbf{Step 6:} 
We conclude by proving that  $u \in \text{GSBD} (\Omega)$  and  that~\eqref{semi378} holds.

Fix $\mathscr{B}$ be any finite family of disjoint balls in $\Omega$ and let $B \in \mathscr{B}$.
For a.e. $\xi \in \R^n$, for $\mathcal{H}^{n-1}$-a.e. $y \in \Pi^\xi$, for every interval $I \subset  B_y^\xi$ of the form $I=(t-\delta,t+\delta)$ for $t \in \partial^* A_y^\xi$ and $\delta >0$, by  \eqref{e:monotonicitylambda},  \eqref{semitilde} and  \eqref{4231}, we have
\begin{align*}
     \liminf_{k \to  \infty} &\ F_{\varepsilon_k}((\hat{u}_k)_y^\xi,B_y^\xi \cap (B-\varepsilon_k\xi)_y^\xi)
     \\
     &
     \ge \liminf_{k \to  \infty } F_{\varepsilon_k}((\hat{u}_k)_y^\xi,I \cap (B-\varepsilon_k\xi)_y^\xi) +
     \liminf_{k \to  \infty } F_{\varepsilon_k}((\hat{u}_k)_y^\xi,B_y^\xi \setminus I \cap (B-\varepsilon_k\xi)_y^\xi) 
     \\
     &
     \ge \liminf_{k \to  \infty } F_{\varepsilon_k}((\hat{u}_k)_{y,\lambda}^\xi,I \cap (B-\varepsilon_k\xi)_y^\xi) +
     \liminf_{k \to  \infty } F_{\varepsilon_k}((\hat{u}_k)_{y,\lambda}^\xi,B_y^\xi \setminus I \cap (B-\varepsilon_k\xi)_y^\xi) 
     \\
     & \ge \frac{\pi}{2}\mathcal{H}^0(\partial^*A_y^\xi \cap I)+\frac{\pi}{2} \mathcal{MS}_{\frac{2}{\pi}}(\hat{u}^{\xi}_{y,\lambda},B_y^\xi \setminus I)
     \\
     & =\frac{\pi}{2}\mathcal{H}^0(\partial^*A_y^\xi \cap I) + \frac{\pi}{2}\mathcal{H}^0(J_{\hat{u}^\xi_{y,\lambda}}\setminus I) + \int_{B_y^\xi \setminus I}|\nabla \hat{u}^{\xi}_{y,\lambda}(t)|^2 \,\de t.
\end{align*}
Since the above inequality holds for every $\delta >0$ and $t \in \partial^* A_y^\xi$, we get
\begin{align*}
     \liminf_{k \to  \infty} F_{\varepsilon_k}((\hat{u}_k)_y^\xi,B_y^\xi \cap (B-\varepsilon_k\xi)_y^\xi)\ge \frac{\pi}{2} \mathcal{H}^0(\partial^*A_y^\xi \cup J_{\hat u_{y,\lambda}^\xi})+\int_{B_y^\xi }|\nabla \hat{u}^{\xi}_{y,\lambda}(t)|^2 \,\de t.
\end{align*}
Since $\tau_\lambda(t)=t$ for $t \in [-\lambda/2,\lambda/2]$ and $J_{\hat{u}^\xi_{y,\lambda}} =J_{\hat{u}^\xi_{y}}$, we infer that
\begin{align*}
     \liminf_{k \to  \infty } &F_{\varepsilon_k}((\hat{u}_k)_y^\xi,B_y^\xi \cap (B-\varepsilon_k\xi)_y^\xi) \ge \frac{\pi}{2} \mathcal{H}^0(\partial^*A_y^\xi \cup J_{\hat u_{y,\lambda}^\xi})+\int_{B_y^\xi }|\nabla \hat{u}^{\xi}_{y,\lambda}(t)|^2 \,\de t\\
     &\ge\frac{\pi}{2} \mathcal{H}^0(\partial^*A_y^\xi \cup J_{\hat u_{y}^\xi}) + \int_{\big\{ t \in B_y^\xi:| \hat{u}^{\xi}_{y,\lambda}(t)|\le \lambda/2\big\}}|\nabla \hat{u}^{\xi}_{y}(t)|^2 \,\de t .
\end{align*}
Now, by sending $\lambda \to \infty$ we get for every $c \in \R$
\begin{equation*}
\begin{split}
     \liminf_{k \to  \infty} F_{\varepsilon_k}((\hat{u}_k)_y^\xi,B_y^\xi \cap (B-\varepsilon_k\xi)_y^\xi) &\ge \frac{\pi}{2} \mathcal{H}^0(\partial^*A_y^\xi \cup J_{\hat u_{y}^\xi}) +\int_{B_y^\xi}|\nabla \hat{u}^{\xi}_{y}(t)|^2 \,\de t\\
     &\ge\frac{\pi}{2}\mathcal{MS}_{\frac{2}{\pi}}(\hat u_y^\xi (1-\chi_{A_y^\xi})+c\chi_{A_y^\xi},B_y^\xi).
\end{split}
\end{equation*}
By combining the above inequality with Fatou's Lemma we deduce
\begin{align*}
&\sup_{\mathscr{B}} \sum_{B \in \mathscr{B}}\frac{\pi}{2}\left (\int_{\R^n} \left (\int_{\Pi^\xi} \mathcal{MS}_{\frac{2}{\pi}}(\hat u_y^\xi (1-\chi_{A_y^\xi})+c \chi_{A_y^\xi},B_y^\xi) \de \mathcal{H}^{n-1}(y) \right )^{p}|\xi|^p e^{-|\xi|^2} \de \xi\right )^{\frac{1}{p}}\\
&\le \liminf_{k \to \infty}\sup_{\mathscr{B}} \sum_{B \in \mathscr{B}}\left (\int_{\frac{\Omega-\Omega}{\varepsilon_k}} \left (\int_{\Pi^\xi} F_{\varepsilon_k}((\hat{u}_k)_y^\xi,B_y^\xi \cap (B-\varepsilon_k\xi)_y^\xi)\de \mathcal{H}^{n-1}(y) \right )^{p}|\xi|^p e^{-|\xi|^2} \de \xi\right )^{\frac{1}{p}}\\
&=\liminf_{k \to \infty} \mathcal{F}^p_{\varepsilon_k}(u_k,\Omega) <\infty,   
\end{align*}
which implies that $\hat{\mu}_u^p(\Omega)<\infty$, and hence, by Remark \ref{r:mup>mu1}, also that
$u \in \text{GSBV}^{\mathcal{E}}(\Omega;\R^n)$. Thus by Theorem \ref{t:main-emanuele} we conclude that $u \in \text{GSBD}(\Omega)$.
In addition, recall that, by Lemma~\ref{limite}, we have for every $c\in \R$
    \begin{align*}
         \sup_{\mathscr{B}} \sum_{B \in \mathscr{B}}\frac{\pi}{2}&\left (\int_{\R^n} \left (\int_{\Pi^\xi} \mathcal{MS}_{\frac{2}{\pi}}(\hat u_y^\xi (1-\chi_{A_y^\xi})+c \chi_{A_y^\xi},B_y^\xi) \de \mathcal{H}^{n-1}(y) \right )^{p}|\xi|^p e^{-|\xi|^2} \de \xi\right )^{\frac{1}{p}}\\
         &=\!\int_{\Omega}  \varphi_{p}  (e(u_c(1-\chi_A))) \de x +  \beta_{p}  \mathcal{H}^{n-1}(J_{u_c}),
    \end{align*} 
where $u_c\colon\Omega \to \R^n$ is defined as $u_c:=u(1-\chi_A)+c \chi_A$. Since a measure theoretic argument yields that $\partial^*A \subset J_{u_c}$ for a.e. $c \in \R$, inequality \eqref{semi378} follows.

\section{$\Gamma$-convergence and convergence of quasi-minimisers}
\label{s:gamma-conv}

This section is devoted to the proof of Theorem~\ref{t:maingamma} as well as to the convergence, up to subsequences, of quasi-minimizers of~$\mathcal{F}^{p}_{\varepsilon}$ under Dirichlet boundary conditions. The former is performed in Section~\ref{sub:Gamma}, the latter in Section~\ref{sub:minimizers}.

\subsection{$\Gamma$-convergence}
\label{sub:Gamma}

We start with the following pointwise convergence result following the strategy in~\cite[Theorem~3.4]{Gobbino}.
\begin{proposition}
    Let $I \subset \R$ be an interval  and let  $u \in \textnormal{GSBV}(I)$. For every $\varepsilon>0$, it holds
    \begin{equation}\label{3.22gob}
    F_{\varepsilon}(u,I \cap (I-\varepsilon ))\le \frac{\pi}{2}\mathcal{MS}_{\frac{2}{\pi}}(u,I).
    \end{equation}
\end{proposition}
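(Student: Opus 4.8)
The plan is to follow the one-dimensional upper-bound argument of~\cite[Theorem~3.4]{Gobbino}, adapted to the normalisation used here. We may assume that $\frac{\pi}{2}\mathcal{MS}_{\frac{2}{\pi}}(u,I) = \int_I |\nabla u|^2\,\de t + \frac{\pi}{2}\mathcal{H}^0(J_u\cap I) < \infty$, for otherwise there is nothing to prove; in particular $\nabla u \in L^2(I)$ and $J_u\cap I$ consists of finitely many points. Since $F_\varepsilon(u,\cdot)$ is insensitive to modifications of $u$ on $\mathcal{L}^1$-null sets, we work with the good (precise) representative of $u$: on every subinterval of $I$ not meeting $J_u$ this representative is absolutely continuous with derivative $\nabla u$, and it is finite $\mathcal{L}^1$-a.e.\ on $I$. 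Note also that for $t \in I\cap(I-\varepsilon)$ one has $t, t+\varepsilon \in I$, hence $(t,t+\varepsilon)\subset I$ because $I$ is an interval.

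Next I would split the domain of integration according to the presence of a jump: set
\[
A_1 := \{t \in I\cap(I-\varepsilon) : (t,t+\varepsilon)\cap J_u = \emptyset\}, \qquad A_2 := (I\cap(I-\varepsilon))\setminus A_1,
\]
and estimate $F_\varepsilon(u,I\cap(I-\varepsilon)) = \frac{1}{\varepsilon}\int_{A_1}\arctan(\cdots)\,\de t + \frac{1}{\varepsilon}\int_{A_2}\arctan(\cdots)\,\de t$ separately. On $A_1$, after discarding the finitely many values of $t$ with $t\in J_u$ or $t+\varepsilon\in J_u$, the representative of $u$ is absolutely continuous on $[t,t+\varepsilon]$, so $u(t+\varepsilon)-u(t)=\int_t^{t+\varepsilon}\nabla u(s)\,\de s$ and, by Cauchy--Schwarz, $(u(t+\varepsilon)-u(t))^2 \le \varepsilon\int_t^{t+\varepsilon}|\nabla u(s)|^2\,\de s$. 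Using $\arctan(x)\le x$ for $x\ge 0$ and Fubini's theorem (each $s$ lies in $(t,t+\varepsilon)$ for $t$ in a set of $\mathcal{L}^1$-measure at most $\varepsilon$),
\[
\frac{1}{\varepsilon}\int_{A_1}\arctan\!\Big(\tfrac{(u(t+\varepsilon)-u(t))^2}{\varepsilon}\Big)\,\de t \le \frac{1}{\varepsilon}\int_{I\cap(I-\varepsilon)}\!\int_t^{t+\varepsilon}|\nabla u(s)|^2\,\de s\,\de t \le \int_I |\nabla u(s)|^2\,\de s.
\]

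On $A_2$ I would simply use the trivial bound $\arctan\le \pi/2$, so that $\frac{1}{\varepsilon}\int_{A_2}\arctan(\cdots)\,\de t \le \frac{\pi}{2}\frac{\mathcal{L}^1(A_2)}{\varepsilon}$, and then bound $\mathcal{L}^1(A_2)$. For $t\in A_2$ there is $s\in J_u\cap(t,t+\varepsilon)$; since $t,t+\varepsilon\in I$ and $I$ is an interval, such $s$ lies in $J_u\cap I$, and $s\in(t,t+\varepsilon)$ is equivalent to $t\in(s-\varepsilon,s)$. Hence $A_2 \subset \bigcup_{s\in J_u\cap I}(s-\varepsilon,s)$, which gives $\mathcal{L}^1(A_2)\le \varepsilon\,\mathcal{H}^0(J_u\cap I)$ and therefore $\frac{1}{\varepsilon}\int_{A_2}\arctan(\cdots)\,\de t \le \frac{\pi}{2}\mathcal{H}^0(J_u\cap I)$. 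Adding the two contributions yields exactly $F_\varepsilon(u,I\cap(I-\varepsilon)) \le \int_I|\nabla u|^2\,\de t + \frac{\pi}{2}\mathcal{H}^0(J_u\cap I) = \frac{\pi}{2}\mathcal{MS}_{\frac{2}{\pi}}(u,I)$, which is~\eqref{3.22gob}. The argument is essentially elementary; the only point requiring some care is the structural input from the GSBV hypothesis, namely that the precise representative is absolutely continuous across jump-free subintervals with $L^2$ derivative $\nabla u$ — this is where finiteness of the right-hand side is genuinely used, and it should be stated carefully rather than treated as a real obstacle.
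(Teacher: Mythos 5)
Your proof is correct and follows essentially the same route as the paper: split $I\cap(I-\varepsilon)$ according to whether $[t,t+\varepsilon]$ (resp.\ $(t,t+\varepsilon)$) meets $J_u$, bound the jump part by $\frac{\pi}{2}\mathcal{H}^0(J_u\cap I)$ via the trivial bound $\arctan\le\pi/2$ and a covering of measure at most $\varepsilon\,\mathcal{H}^0(J_u\cap I)$, and bound the remaining part by $\int_I|\nabla u|^2\,\de t$ using Cauchy--Schwarz, $\arctan x\le x$, and Fubini. Your explicit justification of the absolute continuity of the precise representative on jump-free subintervals is a point the paper leaves implicit, but the argument is the same.
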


\begin{proof}
Since $I$ is an interval, the set $I \cap (I - \varepsilon)$ is also an interval or it is empty. Assume that it is not empty otherwise there is nothing to prove.
Define $A^\varepsilon:= \{t \in I \cap (I -\varepsilon) : [t,t+\varepsilon] \cap J_u \neq \emptyset \}$. Notice that $t \in I \cap (I -\varepsilon)$ implies $[t,t+\varepsilon] \subset I$. We have
\[
\frac{1}{\varepsilon}\int_{A_\varepsilon} \arctan \bigg (\frac{(u(t+\varepsilon)- u(t))^2}{\varepsilon} \bigg ) \de t \le \frac{\pi}{2 \varepsilon} |A_\varepsilon| \le \frac{\pi}{2} \mathcal{H}^0(J_u)\,.
\]
 Since $\arctan x \le x$ for $x \ge 0$,  we have
\begin{align*}
&\frac{1}{\varepsilon}\int_{(I \cap (I-\varepsilon))\setminus A_\varepsilon} \arctan \bigg (\frac{(u(t+\varepsilon)- u(t))^2}{\varepsilon} \bigg ) \de t \\
&\le \frac{1}{\varepsilon} \int_{(I \cap (I-\varepsilon))\setminus A_\varepsilon} \arctan \bigg (\int_t^{t + \varepsilon} |\nabla u(\tau)|^2\, \de\tau  \bigg ) \de t\\
&\le \frac{1}{\varepsilon} \int_{(I \cap (I-\varepsilon))\setminus A_\varepsilon} \int_t^{t + \varepsilon} |\nabla u(\tau)|^2\, \de\tau \,\de t\le \int_{I} |\nabla u(t)|^2 \de t.
\end{align*}
By combining the above two inequalities we conclude the proof.
\end{proof}

We are now in a position to conclude the proof of Theorem~\ref{t:maingamma}.

\begin{proof}[Proof of Theorem~\ref{t:maingamma}]
    We notice that the $\Gamma$-liminf inequality is a direct consequence of Theorem~\ref{compattezza} (cf.~\eqref{semi378}). To conclude for the $\Gamma$-convergence, it is enough to show that for every $u \in L^{0} (\Om; \R^{n})$ it holds true
    \begin{equation}
    \label{gamma2}
        \lim_{\varepsilon \to 0} \mathcal{F}^{p}_{\varepsilon} (u, \Om) \leq \mathcal{F}^{p} (u, \Om)\,.
    \end{equation}
    We begin by recalling that
    \begin{equation*}
    \begin{split}
    \mathcal{F}^p_{\varepsilon}(u,\Omega)&=\sup_{\mathscr{B}}\sum_{B \in \mathscr{B}}\left (\int_{\frac{\Omega -\Omega}{\varepsilon}} F_{\varepsilon,\xi}(u,B)^p e^{-|\xi|^2} \de \xi\right )^{\frac{1}{p}}\\
    &=\sup_{\mathscr{B}}\sum_{B \in \mathscr{B}}\left (\int_{\frac{\Omega -\Omega}{\varepsilon}} \left (\int_{\Pi^\xi}F_{\varepsilon}(\hat u_y^\xi,B_y^\xi \cap (B-\varepsilon\xi)_y^\xi) \de y \right )^p |\xi|^p e^{-|\xi|^2} \de \xi\right )^{\frac{1}{p}},
    \end{split}
    \end{equation*}
    where the supremum is taken over  the set of finite families of disjoint open balls contained in $\Omega$.
     Then, since $(B-\varepsilon\xi)^\xi_y=B^\xi_y -\varepsilon$ and since $B^\xi_y \subset \mathbb{R}$ is an interval, we combine the above equation with \eqref{3.22gob} and we deduce 
    \begin{align*}
    \limsup_{\varepsilon\to 0}\mathcal{F}^p_{\varepsilon}(u,\Omega) &\le \limsup_{\varepsilon\to 0} \sup_{\mathscr{B}}\sum_{B \in \mathscr{B}} \frac{\pi}{2} \left (\int_{\frac{\Omega -\Omega}{\varepsilon}} \left (\int_{\Pi^\xi} \mathcal{MS}_{\frac{2}{\pi}}(\hat u_y^\xi,B_y^\xi ) \de y \right )^p |\xi|^p e^{-|\xi|^2} \de \xi \right )^{\frac{1}{p}}\\
    &\le  \sup_{\mathscr{B}}   \sum_{B \in \mathscr{B}}  \frac{\pi}{2} \left (\int_{\R^n} \left (\int_{\Pi^\xi} \mathcal{MS}_{\frac{2}{\pi}}(\hat u_y^\xi, B_y^\xi  ) \de y \right )^p |\xi|^p e^{-|\xi|^2} \de \xi\right )^{\frac{1}{p}}.
    \end{align*}
    Thanks to Lemma~\ref{limite} we infer~\eqref{gamma2}. This concludes the proof of the theorem.
\end{proof}

\subsection{Convergence of quasi-minimisers}
\label{sub:minimizers}

For the purposes of this subsection, we fix two open sets $\Omega \subset \Omega' \subset \mathbb{R}^n$. 
We further assume that $\partial_D \Omega$, namely, the Dirichlet part of the boundary, satisfies $\partial_D \Omega= \partial \Omega \cap \Omega'$. As it is  customary in free discontinuity problems,   
we consider a relaxed boundary condition on $\partial_D \Omega$. To this purpose, our Dirichlet datum is given by a function $f \colon \partial_D \Omega \to  \R^n $ which we identify, with a slight abuse of notation, with the trace on~$\partial_{D} \Omega$ of a function $f \in H^{1}(\Omega'; \R^n)$. 
The domain of our functionals is thus denoted by $L^0_f(\Omega';\R^n)$ and defined as
\[
L^0_f(\Omega';\R^m):= \big \{u \in L^0(\Omega';\R^n) : u=  f  \text{ a.e. in }\Omega'\setminus \Omega\big \}.
\]
In addition we define for every $\varepsilon >0$ the functionals $\mathcal{F}^f_{\varepsilon}\colon L^{0} (\Om'; \R^{n}) \to [0, \infty]$ as
\[
\mathcal{F}^{p,f}_{\varepsilon}(u,\Omega') :=
\begin{cases}
   \mathcal{F}^p_{\varepsilon}(u,\Omega') & \text{if }  u \in L^0_f(\Omega';\mathbb{R}^n) \\[1mm]
  + \infty & \text{otherwise in } L^0(\Omega';\mathbb{R}^n) 
\end{cases}
\]
and the limit functional $\mathcal{F}^{p,f} \colon L^{0} (\Om'; \R^{n}) \to [0, \infty] $ as
\[
\mathcal{F}^{p,f}(u,\Omega') :=
\begin{cases}
   \mathcal{F}^p(u,\Omega') & \text{if }  u \in  \text{GSBD}(\Omega')  \cap L^0_f(\Omega';\mathbb{R}^n) \\[1mm]
  + \infty & \text{otherwise in } L^0(\Omega';\mathbb{R}^n).
\end{cases}
\]
We further observe that the functional $\mathcal{F}^{p, f}$  takes the form 
\[
   \mathcal{F}^{p, f}  (u,\Omega')= \int_{\Omega'}  \varphi_{p}  (e (u))   \de x +  \beta_{p}  \bigg(\mathcal{H}^{n-1}(J_u \cap \Omega) + \mathcal{H}^{n-1}(J_u \cap \partial_D \Omega)\bigg),
\]
and that the term $\mathcal{H}^{n-1}(J_u \cap \partial_D \Omega)$ penalizes the part of $\partial_D \Omega$ on which the Dirichlet condition is not attained, namely, the set $\{x \in \partial_D \Omega  : u^-(x) \neq f(x) \}$ where $u^-(x)$ denotes the trace with respect to the inner normal to $\partial \Omega$ of $u$ at $x$ .

We have the following theorem.

\begin{theorem}
    \label{t:convqmin}
     Let   $\Omega, \Om' \subset \R^n$ be two open sets, let  $\partial_D \Omega = \partial \Omega \cap \Om'$  be the Dirichlet part of the boundary,  and let $f \in H^{1}(\Omega ' ; \R^n)$  be an admissible Dirichlet datum as above. Assume that $\{u_\varepsilon\}_{\varepsilon >0}$ is  a family of quasi-minimisers for $\{\mathcal{F}^{p,f}_\varepsilon\}_{\varepsilon>0}$, namely,
    \begin{equation}
    \label{e:quasimin}
\lim_{\varepsilon \to 0} \bigg( \mathcal{F}_\varepsilon^{p,f}(u_\varepsilon,\Omega')- \inf_{w \in L^0(\Omega';\R^n)} \mathcal{F}_\varepsilon^{p,f}(w,\Omega') \bigg) = 0\,.  
    \end{equation}
    Then, there exist a minimizer $u \in {\rm GSBD} (\Om') \cap L^{0}_{f} (\Om'; \R^{n})$ of~$\mathcal{F}^{p, f}$ and a subsequence $\varepsilon_{k} \to 0$ as $k \to \infty$ such that $u_{\varepsilon_k}\to u$ almost everywhere in $\Omega'$. 
\end{theorem}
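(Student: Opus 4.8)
The plan is to combine the compactness result of Theorem~\ref{compattezza}, the $\Gamma$-convergence of Theorem~\ref{t:maingamma}, and a standard recovery-sequence construction that respects the Dirichlet datum. First I would establish a uniform energy bound along the quasi-minimising family: since $f \in H^1(\Omega';\R^n)$, the constant sequence $w_\varepsilon \equiv f$ lies in $L^0_f(\Omega';\R^n)$, and by the pointwise inequality \eqref{3.22gob} applied slicewise together with Lemma~\ref{limite} one gets $\limsup_{\varepsilon\to 0}\mathcal{F}^{p,f}_\varepsilon(f,\Omega') \le \mathcal{F}^{p,f}(f,\Omega') = \int_{\Omega'}\varphi_p(e(f))\,\de x < \infty$. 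Hence by \eqref{e:quasimin} we have $\sup_{\varepsilon}\mathcal{F}^{p,f}_\varepsilon(u_\varepsilon,\Omega') = \sup_\varepsilon\mathcal{F}^p_\varepsilon(u_\varepsilon,\Omega') < \infty$. Applying Theorem~\ref{compattezza} on $\Omega'$, we extract a subsequence $\varepsilon_k\to 0$, a set $A\subset\Omega'$ of finite perimeter, and a measurable $u\colon\Omega'\setminus A\to\R^n$ with $u_{\varepsilon_k}\to u$ a.e.\ on $\Omega'\setminus A$, and for a.e.\ $c\in\R$ the extension (still called $u$, equal to $c$ on $A$) belongs to $\text{GSBD}(\Omega')$ with $\liminf_k\mathcal{F}^p_{\varepsilon_k}(u_{\varepsilon_k},\Omega') \ge \int_{\Omega'}\varphi_p(e(u))\,\de x + \beta_p\mathcal{H}^{n-1}(J_u\cup\partial^*A)$.

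Next I would pin down the boundary condition and show $A$ is negligible. Since $u_{\varepsilon_k} = f$ a.e.\ on $\Omega'\setminus\Omega$ and $f$ is finite a.e., the definition \eqref{e:setA} of $A$ forces $A\subset\Omega$ up to a null set; moreover $u = f$ a.e.\ on $\Omega'\setminus\Omega$, so $u\in L^0_f(\Omega';\R^n)$ once we fix the value $c$ on $A$ (which sits inside $\Omega$). To upgrade the lower bound from $\mathcal{H}^{n-1}(J_u\cup\partial^*A)$ to the functional $\mathcal{F}^{p,f}(u,\Omega')$ that also counts $J_u\cap\partial_D\Omega$, I would argue exactly as in the last step of the proof of Theorem~\ref{compattezza}: a measure-theoretic argument shows that for a.e.\ $c\in\R$ one has $\partial^*A\subset J_{u_c}$ where $u_c := u(1-\chi_A)+c\chi_A$, and moreover the slicewise analysis of Steps~4--6 of that proof, carried out on $\Omega'$, yields $\partial_D\Omega\cap\{u^-\ne f\}\subset J_{u_c}\cup\partial^*A$ up to $\mathcal{H}^{n-1}$-null sets — here one uses that on $\Omega'\setminus\Omega$ the sliced traces of $u_{\varepsilon_k}$ equal those of $f$. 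Combining, $\liminf_k\mathcal{F}^p_{\varepsilon_k}(u_{\varepsilon_k},\Omega') \ge \mathcal{F}^{p,f}(u_c,\Omega')$ for a.e.\ $c$; fixing one such $c$ we rename $u := u_c$.

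Finally I would close the variational loop. For any competitor $w\in\text{GSBD}(\Omega')\cap L^0_f(\Omega';\R^n)$, the $\Gamma$-$\limsup$ inequality of Theorem~\ref{t:maingamma} produces a recovery sequence $w_\varepsilon\to w$ with $\limsup_\varepsilon\mathcal{F}^p_\varepsilon(w_\varepsilon,\Omega')\le\mathcal{F}^p(w,\Omega')$; one must modify $w_\varepsilon$ near $\partial_D\Omega$ so that $w_\varepsilon = f$ a.e.\ on $\Omega'\setminus\Omega$, at the cost of an extra surface term concentrated on $\partial_D\Omega$ controlled by $\beta_p\mathcal{H}^{n-1}(J_w\cap\partial_D\Omega)$, so that in fact $\limsup_\varepsilon\mathcal{F}^{p,f}_\varepsilon(w_\varepsilon,\Omega')\le\mathcal{F}^{p,f}(w,\Omega')$. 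Then, using \eqref{e:quasimin} and the liminf bound,
\[
\mathcal{F}^{p,f}(u,\Omega') \le \liminf_k\mathcal{F}^{p,f}_{\varepsilon_k}(u_{\varepsilon_k},\Omega') = \liminf_k\inf_{L^0(\Omega';\R^n)}\mathcal{F}^{p,f}_{\varepsilon_k} \le \limsup_k\mathcal{F}^{p,f}_{\varepsilon_k}(w_{\varepsilon_k},\Omega') \le \mathcal{F}^{p,f}(w,\Omega'),
\]
so $u$ is a minimiser of $\mathcal{F}^{p,f}$; taking the infimum over $w$ also shows the infima of $\mathcal{F}^{p,f}_{\varepsilon_k}$ converge to $\min\mathcal{F}^{p,f}$. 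The main obstacle I anticipate is the careful bookkeeping of the trace term on $\partial_D\Omega$: both showing that the discrepancy set $\{u^-\ne f\}\subset\partial_D\Omega$ is charged in the $\liminf$ (via the slicing argument adapted to $\Omega'$, tracking that $u_{\varepsilon_k}$ is frozen to $f$ outside $\Omega$) and constructing the recovery sequence so that it attains the datum exactly while not over-counting energy on $\partial_D\Omega$. The bulk and interior-jump parts are routine given the three cited results.
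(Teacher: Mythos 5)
Your skeleton coincides with the paper's proof (uniform bound via the competitor $f$ and the limsup inequality, compactness from Theorem~\ref{compattezza} applied on $\Omega'$, $A\subset\Omega$ since $u_\varepsilon=f$ outside $\Omega$, then the chain of inequalities through quasi-minimality). The place where you deviate is the $\limsup$ step, and as written it contains a genuine gap: you invoke a generic recovery sequence $w_\varepsilon\to w$ from Theorem~\ref{t:maingamma} and then assert that it can be ``modified near $\partial_D\Omega$ so that $w_\varepsilon=f$ a.e.\ on $\Omega'\setminus\Omega$ at the cost of an extra surface term''. No such construction is given, and in the GSBD setting (fields that are merely measurable, no integrability) gluing a recovery sequence to a prescribed boundary datum is a delicate fundamental-estimate type argument that cannot be waved through. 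The point you miss is that in this paper the $\Gamma$-$\limsup$ inequality \eqref{gamma2} is a \emph{pointwise} convergence statement: $\lim_{\varepsilon\to 0}\mathcal{F}^p_\varepsilon(w,\Omega')\le\mathcal{F}^p(w,\Omega')$ for every fixed $w$, i.e.\ the constant sequence is a recovery sequence. For a competitor $w\in{\rm GSBD}(\Omega')\cap L^0_f(\Omega';\R^n)$ the constant sequence already satisfies the constraint, so $\mathcal{F}^{p,f}_{\varepsilon_k}(w,\Omega')=\mathcal{F}^p_{\varepsilon_k}(w,\Omega')\to\mathcal{F}^{p,f}(w,\Omega')$ with no boundary modification whatsoever; this is exactly how the paper closes the variational chain, and it also repairs your argument immediately.

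A second, smaller point: the ``upgrade'' of the lower bound via a re-run of Steps 4--6 of the compactness proof to show that the trace discrepancy on $\partial_D\Omega$ is charged is superfluous. Applying \eqref{semi378} on $\Omega'$ already produces $\mathcal{H}^{n-1}(J_u\cup\partial^*A)$ with $J_u$ the jump set of $u$ computed in the open set $\Omega'$; since $u=f\in H^1$ on $\Omega'\setminus\Omega$, one has $J_u\subset\Omega\cup\partial_D\Omega$ up to $\mathcal{H}^{n-1}$-null sets, so $\mathcal{H}^{n-1}(J_u)=\mathcal{H}^{n-1}(J_u\cap\Omega)+\mathcal{H}^{n-1}(J_u\cap\partial_D\Omega)$ and the relaxed boundary term is automatically contained in the liminf bound -- the decomposition in the definition of $\mathcal{F}^{p,f}$ is only a rewriting, not an extra term to be recovered. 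Finally, note (as the paper does) that one cannot literally quote $\Gamma$-convergence for the lower bound because $u_{\varepsilon_k}$ does not converge in measure on all of $\Omega'$ (it blows up on $A$); one must use \eqref{semi378} directly, and the inequality $\mathcal{H}^{n-1}(J_u\cup\partial^*A)\ge\mathcal{H}^{n-1}(J_u)$, with $\partial^*A\subset J_u$ then following a posteriori from the arbitrariness of $w$. Your write-up essentially does this, so apart from the unproven boundary-matching construction the argument is sound.
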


\begin{proof}
By \eqref{gamma2} we have that $\sup_{\varepsilon >0}\mathcal{F}^{p,f}_\varepsilon(f,\Omega') < \infty$.  Hence, condition \eqref{e:quasimin} yields $\sup_{\varepsilon >0}\mathcal{F}^{p,f}_\varepsilon(u_\varepsilon,\Omega') < \infty$. We are thus in position to apply Theorem~\ref{compattezza} to infer the existence of a subsequence $\varepsilon_k \to 0$ as $k \to \infty$ and a  finite  perimeter set $A \subset \Omega'$  such that $A = \{ x \in \Om': \, |u_{\varepsilon_{k}}| \to \infty \text{ as $k \to \infty$} \}$, $u_{\varepsilon_k} \to u$ pointwise a.e. in $\Omega' \setminus A$ for $u \in  \text{GSBD}  (\Omega')$ with $u=0$  in~$A$.  In addition,~\eqref{semi378} is  fulfilled. Since $u_\varepsilon = f$ a.e.~in~$\Omega' \setminus \Omega$ for every 
$\varepsilon>0$, we deduce $A \subset \Omega$. 

It remains to prove that $u$ is actually a minimiser of $\mathcal{F}^{p,f}(\cdot,\Omega')$. We observe that we cannot immediately infer that $u$ is a minimiser from the $\Gamma$-convergence given by Theorem~\ref{t:maingamma}, since the sequence  $u_{\varepsilon_k}$ does not convergence in measure to $u$ in the whole of~$\Omega'$. Nevertheless,  the proof can be achieved with the very same argument as in the standard case. Indeed, take  $w \in  L^{0} (\Om'; \R^{n})$.  Since we want to prove that $\mathcal{F}^{p,f}(w,\Omega') \geq \mathcal{F}^{p,f}(u,\Omega')$, we assume with no loss of generality that  $w \in  \text{GSBD}(\Omega') \cap L^{0}_{f} (\Om'; \R^{n})$.  By virtue of \eqref{semi378}, \eqref{gamma2}, and \eqref{e:quasimin}, we  estimate
\begin{align*}
    \mathcal{F}^{p,f}(w,\Omega') &=\lim_{k \to \infty} \mathcal{F}^{p,f}_{\varepsilon_k}(w,\Omega') \geq \limsup_{k \to \infty} \inf_{ v \in L^0(\Omega';\R^m)} \mathcal{F}_\varepsilon^{p,f}( v,\Omega') \\
    & = \limsup_{k \to \infty}\mathcal{F}_\varepsilon^{p,f}(u_{\varepsilon_k},\Omega') \geq \int_{\Omega'}  \varphi_{p} (e(u))  \, \de x +  \beta_{p}  \mathcal{H}^{n-1}(J_u \cup \partial^* A) \\
     &\geq \int_{\Omega'}  \varphi_{p} (e(u))  \, \de x  +  \beta_{p}   \mathcal{H}^{n-1}(J_u)\\
     &=
     \int_{\Omega'}  \varphi_{p} ( e(u))  \,  \de x +  \beta_{p}  \bigg(\mathcal{H}^{n-1}(J_u \cap \Omega) + \mathcal{H}^{n-1}(J_u \cap \partial_D \Omega)\bigg) = \mathcal{F}^{p,f}(u,\Omega').
\end{align*}
Thanks to the arbitrariness of $w$  we deduce that $\partial^{*}A \subset J_{u}$ and that $u$ is a minimizer of~$\mathcal{F}^{p, f}$.     
\end{proof}

\appendix
\section{}\label{appenda}

\begin{proof}[Proof of Proposition \ref{p:complete}]
    For every measurable set $B \subset A$ we can consider the linear map $i_B \colon L^0(A) \to L^0(B)$ defined as $i(v):= v \restr B$. Clearly, being $V \subset L^0(A)$, the map $i_B$ is a well defined linear map between the vector spaces $V$ and $V(B)$, where we have set $V(B):= \{u \in L^0(B) : u=v \restr B, \text{ for some }v \in V \}$. We claim that there exists $\varepsilon_c >0$ such that for every measurable set $K \subset A$ with $\mathcal{L}^n(A \setminus K) \leq \varepsilon_c$ we have that the linear map $i_K \colon V \to V(K)$ is injective. Indeed, fix a basis $\{v_1,\dotsc,v_k\}$ for $V$ and assume by contradiction that there exists $\varepsilon_j \searrow 0$ and measurable sets $K_j \subset A$ with $\mathcal{L}^n(A \setminus K_j) \leq \varepsilon_j$ such that there exists real coefficients, not all identically zero, $\{\alpha^j_1,\dotsc,\alpha^j_k\}$, satisfying 
    \begin{equation}
    \label{e:lindep}
    \sum_{i=1}^k \alpha^j_i (v_i \restr K_j) =0. 
    \end{equation}
    By renormalization, with no loss of generality, we may assume that for every $i=1,\dotsc,k$ and every $j=1,2,\dotsc$ it holds true $|\alpha^j_i| \leq 1$ and there exists at least one $i(j)=1,\dotsc,k$ such that $|\alpha^j_{i(j)}|=1$. Up to pass to a not relabelled subsequence in $j$, we may suppose that the index $i(j)$ does not depend on $j$, and that $\alpha^j_i \to \alpha_i$ for every $i=1,\dotsc,n$, for some real coefficients $\{\alpha_1,\dotsc,\alpha_k\}$ which are not all identically zero. Therefore, since $K_j \nearrow A$ in measure we deduce that $\sum_{i=1}^k \alpha^j_i i_{K_j}(v_i)$ (extended to zero out of $K_j$) converges to $\sum_{i=1}^k \alpha_i v_i$ pointwise a.e. in $A$ as $j \to \infty$. From \eqref{e:lindep} we immediately deduce that $\sum_{i=1}^k \alpha_i v_i=0$ in $L^0(A)$ which gives a contradiction. The claim is thus proved.

    Now assume that $\{w_j\}_{j \in \N} \subset V$ is a Cauchy sequence. Since $L^0(A)$ is complete we have ${\rm d}(w_j,w) \to 0$ as $j \to \infty$ for some $w \in L^0(A)$. We want to prove that $w \in V$. Up to passing to a subsequence in $j$, the ${\rm d}$-convergence implies $w_j \to w$ pointwise a.e. in $A$ as $j \to \infty$. Therefore, for every $0<\varepsilon \leq \varepsilon_c$, we apply Egorov's Theorem to find a measurable set $K \subset A$ such that $\mathcal{L}^n(A \setminus K) \leq \varepsilon$ and $w_j \to w$ uniformly on $K$ as $j \to \infty$. But this means that the sequence $\{i_K(w_j)\}_{j\in \N}$ is a sequence belonging to the finite-dimensional vector space $V(K)$ and converging with respect to the $L^1$-norm. Hence, we find real coefficients $\{\alpha_1^K, \dotsc,\alpha_k^K\}$ such that $i_K(w)= \sum_{i=1}^k \alpha_i^K i_K(v_i)$, where $\{v_1,\dotsc, v_k\}$ is a basis of $V$. In order to conclude we need to show that, given a sequence of measurable sets $K_j \nearrow A$ the coefficients $\{\alpha_1^{K_j}, \dotsc,\alpha_k^{K_j}\}$ are definitely constant as $j \to \infty$. So assume by contradiction that for every $j\geq 1$ we find $j < j_1 < j_2$ such that $\{\alpha^{K_{j_1}}_1, \dotsc, \alpha^{K_{j_1}}_k\} \neq \{\alpha^{K_{j_2}}_1, \dotsc, \alpha^{K_{j_2}}_k\}$. Now choose $j_1$ so large such that $\mathcal{L}^n(A \setminus (K_{j_1} \cap K_{j_2})) \leq \varepsilon_c$. By our assumption on the non-coincidence between the $K_{j_1}$-coefficients and the $K_{j_2}$-coefficients, we deduce that the linear combination $\sum_{i=1}^k (\alpha^{K_{j_1}}_i -\alpha^{K_{j_2}}_i) i_{K_{j_1} \cap K_{j_2}}(v_i)$ is identically null in $L^0(K_{j_1} \cap K_{j_2})$ while its coefficients are not all identically equal to zero. But this gives a contradiction with the fact that, thanks to our previous claim, the linear map $i_{K_{j_1} \cap K_{j_2}} \colon V \to V(K_{j_1} \cap K_{j_2}) \subset L^0(K_{j_1} \cap K_{j_2})$ is injective. 
\end{proof}

\section*{Acknowledgements}
 We would like to thank Antonin Chambolle and Vito Crismale for pointing out to us, after the first version of this paper was written, that the identification of the spaces $GSBD$ and $GSBV^{\mathcal{E}}$ was indeed possible. This observation encouraged us to develop an alternative approach. 

This research has been supported by the Austrian Science Fund (FWF) through grants  \href{https://doi.org/10.55776/F65}{10.55776/F65},  \href{https://doi.org/10.55776/Y1292}{10.55776/Y1292}, \href{https://doi.org/10.55776/P35359}{10.55776/P35359}, \href{https://doi.org/10.55776/F100800}{10.55776/F100800}, by the OeAD-WTZ project CZ04/2019 (M\v{S}MT\v{C}R 8J19AT013), by the Italian Ministry of Research through the PRIN 2022 project No.~2022HKBF5C ``Variational Analysis of Complex Systems in Materials Science, Physics and Biology'', and by the University of Naples Federico II through the FRA Project ``ReSinApas''. S.A. and A.K. are also members of the Gruppo Nazionale per l'Analisi Matematica, la Probabilit\`a e le loro Applicazioni (INdAM-GNAMPA). For open access purposes, the authors have applied a CCBY public copyright license to any accepted manuscript version arising from this submission.

\subsection*{Conflict of interest:} The Authors declare no conflict of interest.
 
\subsection*{Data availability:} The manuscript contains no associated data.

\printbibliography

\end{document}